\newtheorem{theorem}{Theorem}[section]
\newtheorem{proposition}[theorem]{Proposition}
\newtheorem{corollary}[theorem]{Corollary}
\newtheorem{definition}[theorem]{Definition}
\newtheorem{example}[theorem]{Example}
\theoremstyle{remark}
\newtheorem{remark}[theorem]{Remark}
\renewenvironment{proof}{{\noindent\bf Proof.}}{\hfill $\Box$\par\vskip3mm}
\newcommand{\Cc}{\mathcal{C}}
\theoremstyle{definition}
\theoremstyle{remark}
\let\c@equation\c@thm
\numberwithin{equation}{section}
\title{ Cofree Objects in The Centralizer and The Center Categories }
\author{ADNAN H. ABDULWAHID}
\date{}
\begin{document}

\begin{abstract}
\noindent We study cocompleteness, co-wellpoweredness  and generators in the centralizer category of an object or morphism in a monoidal  category, and the center or the weak center of a monoidal category. We explicitly give some answers for when colimits, cocompleteness, co-wellpoweredness and generators in these monoidal categories can be inherited from their base monidal categories. Most importantly, we  investigate  cofree objects of comonoids in these monoidal categories.   
\end{abstract}

\thanks{2015 \textit{Mathematics Subject Classifications}. 18A35, 18A40, 16T15, 16S40}
\date{}
\keywords{center category, centralizer category, cocompleteness, co-wellpoweredness, colimits, generators, braid category, monoid, comonoid, free object, cofree object, colimits}

\maketitle


\noindent

\section{\textbf{Introduction}}
Universal properties are significantly considered as one of the most important concepts in mathematics. In deed, they can be thought of as the skeleton of all mathematics concepts. They show how the objects and the morphisms being unified and described compatibly relate the whole category that they live in. Lots of influential concepts, such as kernels, cokernels, products, coproducts, limits, colimits, etc, are essentially involved with universal properties. Perhaps the most important notion concerned with them is the concept of adjoint functors. It is simply because ``Adjoint functors arise everywhere"  \cite[p. (vii)]{Mac Lane1}. Significantly, free and cofree objects play a crucial role in recasting the adjunctions of the forgetful functors in terms of comma categories. For fundamentals concepts and examples of adjoint functors, we refer the reader to \cite{Leinster1}, \cite{Mac Lane1}, \cite{Awodey}, \cite{Rotman}, \cite{Pareigis}, or \cite{Mitchell}. For the basic notions of  comma categories, we refer to  \cite{Leinster1} and \cite{Mac Lane1}. \\
  
Let $\mathfrak{X}$ be a category. A \textit{concrete category} over $\mathfrak{X}$ is a pair $(\mathfrak{A},\mathfrak{U})$, where $\mathfrak{A}$ is a category and $\mathfrak{U}: \mathfrak{A} \rightarrow \mathfrak{X}$ is a faithful functor \cite[p. 61]{Adamek}. Let $(\mathfrak{A},\mathfrak{U})$ be a concrete category over $\mathfrak{X}$. Following \cite[p. 140-143]{Adamek}, a \textit{free object} over $\mathfrak{X}$-object $X$ is an $\mathfrak{A}$-object $A$ such that there exists a \textit{universal arrow}  $(A,u)$ over $X$; that is, $u:X \rightarrow \mathfrak{U}A$ such that for every arrow $f:X \rightarrow \mathfrak{U}B$, there exists a unique morphism $f':A \rightarrow B$ in $\mathfrak{A}$ such that $\mathfrak{U}f' u=f$. We also say that $(A,u)$ is the free object over $X$.  A concrete category  $(\mathfrak{A},\mathfrak{U})$ over $\mathfrak{X}$ is said to \textit{have free objects} provided that for each $\mathfrak{X}$-object $X$, there exists a universal arrow over $X$. For example, the category $Vect_{\mathbb{K}}$ of vector spaces over a field $\mathbb{K}$ has free objects. So do the category \textbf{Top} of topological spaces and the category of \textbf{Grp} of groups. However, some interesting categories do not have free objects \cite[p. 142]{Adamek}). \\

Dually, \textit{co-universal arrows}, \textit{cofree objects} and categories that \textit{have cofree objects} can be defined. For the basic concepts of concrete categories, free objects, and cofree objects, we refer the reader to \cite[p. 138-155]{Kilp}. \\

It turns out that a  concrete $(\mathfrak{A},\mathfrak{U})$ over $\mathfrak{X}$ has (co)free objects if and only if the functor that builds up (co)free object is a (right) left adjoint to the faithful functor $\mathfrak{U}: \mathfrak{A} \rightarrow \mathfrak{X}$.\\

Although cofree objects are the dual of free objects, the behavior of cofree objects is more complicated than the one of free objects. Furthermore, studying such behavior cannot be obtained by studying free objects because ``the categories considered are not selfdual generally"  \cite[p. 149]{Kilp}. In this paper, we are interested in investigating cofree objects in the centralizer category of an object or morphism in a monoidal  category and the center or the weak center  of a monoidal category. For the basic notions of monoidal categories, we refer the reader to \cite{Etingof}, \cite{Bakalov}, and \cite[Chapter 6]{Borceux2}.\\

More recently, these monoidal categories play a vibrant role in characterizing and identifying many of interesting categories. For instance, to show that two finite tensor categories are Morita equivalent, it suffices to show that their centers are equivalent as braided tensor categories \cite[p. 222]{Etingof}. Another example is to show that a fusion category is group-theoretical, it is sufficient to show its center contains a Lagrangian subcategory \cite[p. 313]{Etingof}. \\
In addition, there is a special importance for the center of a finite tensor category in finding its Frobenius-Perron dimension. This comes from the fact that for any finite tensor category $\mathscr{C}$, we have  $FPdim(\mathcal{Z}(\mathscr{C})) = FPdim(\mathscr{C})^2$ \cite[p. 168]{Etingof}. We refer to \cite{Joyal2}  for basics on centralizer categories while we refer to   \cite[p. 76]{Street} and \cite[p. 162]{Etingof} for basics on center categories. \\

Explicitly, the problem can be formulated as follows. Let $\Cc$ be a  monoidal category.  Fix an object $X$ and a morphism $ h:A \rightarrow B$ in $\Cc$. For any  $\mathscr{A} \in \{ \mathcal{Z}_h(\Cc), \mathcal{Z}_X(\Cc), \mathcal{Z}(\Cc) ,  \mathcal{Z}_{\omega}(\Cc) \}$, let  $\mathscr{U}_{\!_{{\mathscr{A}}}}: CoMon(\mathscr{A}) \rightarrow \mathscr{A}$  be the forgetful functor corresponding to  $\mathscr{A}$. Does  $\mathscr{U}_{\!_{{\mathscr{A}}}}$ have a right adjoint?   \\

A reasonably expected machinery for the answer of this question is  
the dual of Special Adjoint Functor Theorem (D-SAFT). \\

We start our inspection by studying the cocompleteness in  $\mathscr{A}$, and we give some answers for the question: under what conditions the colimits of objects in $\mathscr{A}$ can be obtained from the corresponding construction for objects in $\Cc$. The later implicitly implies that the forgetful functor  $\mathscr{U}_{\!_{{\mathscr{A}}}}$ is cocontinuous. \\

Next, we study some conditions that make the co-wellpoweredness of the  category $\mathscr{A}$ can be inherited from $\Cc$.\\
We also show how the braiding forces the category $\mathscr{A}$ to inherit generators from its base category $\Cc$.\\

Finally, we apply the mechanism  of D-SAFT for each case. Furthermore, we try to visualize some interesting consequences by studying the braid category.

\section{\textbf{Preliminaries}}\label{s.p}

From now on let $(\Cc,\otimes,I)$ be a monoidal category and for every $X \in \Cc$, $\mathcal{P}_X, \mathcal{Q}_X$  the functors defined by  define the functors

\begin{center}
•$\mathcal{P}_X = X \otimes -: \Cc \rightarrow \Cc$, $M \mapsto X \otimes M$, \\
$\mathcal{Q}_X = - \otimes X: \Cc \rightarrow \Cc$, $M \mapsto M \otimes X$.
\end{center}

\begin{theorem} \cite[p. 148]{Freyd} \label{p.SAFT} 
If $\mathfrak{A}$ is cocomplete, co-wellpowered and with a generating set, then every cocontinuous functor from $\mathfrak{A}$ to a locally small category has a right adjoint.
\end{theorem}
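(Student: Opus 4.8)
The statement above is the dual form of the Special Adjoint Functor Theorem, and the proof I would give is the classical comma-category argument; I record the plan. Write $F\colon\mathfrak{A}\to\mathfrak{X}$ for the given cocontinuous functor, with $\mathfrak{X}$ locally small. A functor has a right adjoint precisely when it admits a couniversal arrow to every object of the codomain, so it suffices to produce, for each $X\in\mathfrak{X}$, a terminal object of the comma category $(F\downarrow X)$: its objects are pairs $(A,f)$ with $A\in\mathfrak{A}$ and $f\colon FA\to X$, and a morphism $(A,f)\to(A',f')$ is an $\mathfrak{A}$-morphism $a\colon A\to A'$ with $f'\circ Fa=f$. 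A terminal object of $(F\downarrow X)$ is exactly a couniversal arrow from $F$ to $X$, and assembling these over all $X$ yields the right adjoint together with its counit by the standard adjoint-functor bookkeeping.

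The heart of the argument is to show that $(F\downarrow X)$ inherits the three hypotheses on $\mathfrak{A}$. Cocompleteness is where cocontinuity of $F$ is actually used: given a small diagram $\{(A_i,f_i)\}$ in $(F\downarrow X)$, take the colimit $L=\operatorname{colim}A_i$ in $\mathfrak{A}$; since $F$ preserves it, $FL=\operatorname{colim}FA_i$, so the compatible family $\{f_i\colon FA_i\to X\}$ induces a unique $\ell\colon FL\to X$, and $(L,\ell)$ is readily checked to be the colimit in $(F\downarrow X)$. For co-wellpoweredness one argues that the quotient objects of a fixed $(A,f)$ form a set, the point being that each is controlled by a quotient of $A$ in $\mathfrak{A}$; this requires a small argument, since epimorphisms of the comma category need not be epimorphisms of $\mathfrak{A}$, but it goes through using co-wellpoweredness of $\mathfrak{A}$. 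Finally, if $\mathcal{G}$ generates $\mathfrak{A}$, then $\{(G,g)\ :\ G\in\mathcal{G},\ g\in\mathfrak{X}(FG,X)\}$ is a \emph{set} --- and this is exactly where local smallness of $\mathfrak{X}$ enters --- and it generates $(F\downarrow X)$, because a pair of distinct parallel morphisms $(A,f)\rightrightarrows(B,f')$ restricts to a pair of distinct parallel morphisms $A\rightrightarrows B$ in $\mathfrak{A}$, which some $\phi\colon G\to A$ separates, and $\phi$ is automatically a comma-category morphism $(G,f\circ F\phi)\to(A,f)$.

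It then remains to apply the dual of Freyd's initial-object lemma: a cocomplete, co-wellpowered category with a generating set has a terminal object. Concretely, set $G^{\ast}=\coprod_{G\in\mathcal{G}}G$, form the colimit $q\colon G^{\ast}\twoheadrightarrow T$ of the diagram of all quotient objects of $G^{\ast}$ --- a set-indexed, hence legitimate, colimit by co-wellpoweredness --- and check that $T$ is terminal: every object is a quotient of a copower of $G^{\ast}$ because $G^{\ast}$ is a generator, $T$ is the largest quotient of every such copower, and uniqueness of the resulting map into $T$ follows from a coequalizer argument dual to the uniqueness half of the lemma. I expect the only genuinely delicate steps to be the correct verification of cocompleteness of $(F\downarrow X)$ (producing the induced morphism into $X$ and checking its universal property) together with the co-wellpoweredness transfer; the construction of the terminal object is Freyd's and may simply be cited.
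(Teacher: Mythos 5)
The paper offers no proof of Theorem \ref{p.SAFT}: it is quoted from Freyd--Scedrov and used throughout as a black box, so there is nothing in the paper to compare your argument against. Your sketch is the standard proof of the dual Special Adjoint Functor Theorem --- reduce to producing a terminal object in each comma category $(F\downarrow X)$, transfer cocompleteness, co-wellpoweredness and the generating set, and invoke the dual of Freyd's initial-object lemma --- and its architecture is sound; the places where you correctly locate the use of cocontinuity of $F$ and of local smallness of $\mathfrak{X}$ are the right ones. Two remarks on the steps you flag as delicate. The co-wellpoweredness transfer is cleaner than you suggest: since $\mathfrak{A}$ is cocomplete and $F$ is cocontinuous, the forgetful functor $U\colon (F\downarrow X)\to\mathfrak{A}$ creates pushouts, and a morphism is an epimorphism exactly when the two coprojections of its cokernel pair coincide; as $U$ is faithful and preserves cokernel pairs, $e$ is epi in $(F\downarrow X)$ if and only if $Ue$ is epi in $\mathfrak{A}$, and (because cocontinuous functors preserve epis) equivalent quotients of $A$ lift to equivalent quotients of $(A,f)$, so the quotients of $(A,f)$ form a set. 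On the other hand, your gloss of the terminal-object construction is slightly off: an arbitrary object need not be a quotient of a copower of $G^{\ast}=\coprod_{G\in\mathcal{G}}G$; it receives an epimorphism from a coproduct of members of $\mathcal{G}$, which one must push out along the canonical comparison into $G^{\ast}$ to obtain a quotient of $G^{\ast}$ equipped with a map from the given object, and only then does the cointersection $T$ receive the desired arrow. Since you explicitly defer that construction to Freyd, this imprecision does not undermine your plan, but it is where the real content of the lemma sits.
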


\begin{definition} \label{def.1} \cite[p. 284]{Schauenburg} 
The \textit{(left) weak center} of $\Cc$ denoted $\mathcal{Z}_{\omega}(\Cc)$ is a category whose objects are pairs $(X,\sigma)$ in which $X \in \Cc$ and $\sigma_{\!_{Y}}: X \otimes Y \rightarrow Y \otimes X$ natural in $Y \in \Cc$ and satisfies $\sigma_{\!_{Y\otimes Z}} = (id_{\!_{Y}} \otimes \sigma_{\!_{Z}})(\sigma_{\!_{Y}} \otimes id_{\!_{X}})$ for all $Y,Z \in \Cc$ and $\sigma_{\!_{X,I}} = id_{\!_{X}}$. 

An arrow $f : (A,\sigma) \rightarrow (B, \tau)$ in $\mathcal{Z}_{\omega}(\Cc)$ is an arrow $f : A \rightarrow B$ in $\Cc$ such that, for all $X \in \Cc$, the following diagram

\begin{equation} \label{diag.eq1}
\xymatrix{
A \otimes X \ar[d]_{\sigma_{\!_{X}}} \ar[rr]^-{(f \otimes id_{\!_{X}})}
& & B \otimes X \ar[d]^{ \tau_{\!_{X}}} \\
X \otimes A \ar[rr]_-{(id_{\!_{X}} \otimes f)} & & X \otimes B }
\end{equation} commutes.\\

The category  $\mathcal{Z}_{\omega}(\Cc)$ is monoidal with 
\begin{equation} \label{def.eq2}
(A,\sigma) \otimes (B,\tau) = (A \otimes B,\delta),
\end{equation}  where 
\begin{equation} \label{def.eq3}
\delta_{\!_{X}}:  A \otimes B \otimes X \rightarrow X \otimes  A \otimes B  = (\sigma_{\!_{X}}\otimes id_{\!_{B}} )(id_{\!_{A}} \otimes \tau_{\!_{X}}).
\end{equation}
\end{definition}

\begin{definition} \label{def.2} \cite[p. 76]{Street} 
The \textit{center} of $\Cc$ denoted $\mathcal{Z}(\Cc)$ is a category whose objects are pairs $(A,\sigma)$ where $A \in \Cc$ and $\sigma: A \otimes - \xrightarrow{ \sim } - \otimes A$ is a natural isomorphism such that the following conditions hold:
 \begin{equation} \label{def.eq4}
\sigma_{\!_{I}} = id_{\!_{A}}
\end{equation} (more precisely, $\sigma_{\!_{I}}$ is the composite of the canonical isomorphisms $A \otimes I \cong A \cong I \otimes A$), and
\begin{equation} \label{def.eq5} 
\sigma_{\!_{X\otimes Y}} = (id_{\!_{X}} \otimes \sigma_{\!_{Y}})(\sigma_{\!_{X}} \otimes id_{\!_{Y}}) 
\end{equation} for all $X,Y \in \Cc$.

An arrow $f : (A,\sigma) \rightarrow (B,\tau)$ in $\mathcal{Z}(\Cc)$ is an arrow $f : A \rightarrow B$ in $\Cc$ such that, for all $X \in \Cc$, the following diagram is commutative
\begin{equation} \label{diag.eq6}
\xymatrix{
A \otimes X \ar[d]_{\sigma_{\!_{X}}}^{\sim} \ar[rr]^-{(f \otimes id_{\!_{X}})}
&& B \otimes X \ar[d]^{ \tau_{\!_{X}}}_{\sim} \\
X \otimes A \ar[rr]_-{(id_{\!_{X}} \otimes f)} && X \otimes B }
\end{equation}
The category  $\mathcal{Z}(\Cc)$ is monoidal with:
\begin{equation} \label{def.eq7}
(A,\sigma) \otimes (B,\tau) = (A \otimes B,\delta),
\end{equation}  where 
\begin{equation} \label{def.eq8}
\delta_{\!_{X}} = (\sigma_{\!_{X}}\otimes id_{\!_{B}} )(id_{\!_{A}} \otimes \tau_{\!_{X}}).
\end{equation}
The category  $\mathcal{Z}(\Cc)$ is braided via
\begin{equation}  \label{def.eq9}
\Psi_{\!_{(A,\sigma), (B,\tau)}} = \sigma_{\!_{B}}:(A,\sigma) \otimes (B,\tau) \rightarrow  (B,\tau) \otimes (A,\sigma).
\end{equation}

 \end{definition}

\begin{remark} \label{rem.1} \cite[p. 76]{Street} 
The condition \ref{def.eq4} on objects of $\mathcal{Z}(\Cc)$ is redundant.
\end{remark}

\begin{definition} \label{def.3} \cite[p. 46-47]{Joyal2}
The \textit{centralizer}  $\mathcal{Z}_X(\Cc)$ of an object $X \in \Cc$ is the category whose 
objects are pairs $(A,\alpha)$, where $A \in \Cc$ and $\alpha: A \otimes X \xrightarrow{ \sim } X \otimes A$. 

An arrow $f : (A,\alpha) \rightarrow (B, \beta)$ in $\mathcal{Z}_X(\Cc)$ is an arrow $f: A \rightarrow B$ in $\Cc$ such that the following diagram is commutative
\begin{equation} \label{diag.eq10}
\xymatrix{
A \otimes X \ar[d]_{\alpha}^{\sim} \ar[rr]^-{(f \otimes id_{\!_{X}})}
&& B \otimes X \ar[d]^{ \beta}_{\sim} \\
X \otimes A \ar[rr]_-{(id_{\!_{X}} \otimes f)} && X \otimes B }
\end{equation}
This becomes a monoidal category with 
\begin{equation} \label{def.eq11}
(A,\sigma) \otimes (B,\tau) = (A \otimes B,\gamma),
\end{equation}  where 
\begin{equation} \label{def.eq12}
\gamma = (\alpha\otimes id_{\!_{Y}} )(id_{\!_{X}} \otimes \beta_{\!_{X}}).
\end{equation}
 \end{definition}

\begin{definition} \label{def.4}  \cite[p. 49]{Joyal2} 
The \textit{centralizer}  $\mathcal{Z}_h(\Cc)$ of an arrow $h: A \rightarrow B$ in $\Cc$ is the category whose objects are triples $(X,\alpha,\beta)$, where $X \in \Cc$ and $\alpha: A \otimes X \xrightarrow{ \sim } X \otimes A, \beta: B \otimes X \xrightarrow{ \sim } X \otimes B$ are isomorphisms such that the following diagram is commutative
\begin{equation} \label{diag.eq13}
\xymatrix{
A \otimes X \ar[d]_{\alpha}^{\sim} \ar[rr]^-{(h \otimes id_{\!_{X}})}
&& B \otimes X \ar[d]^{ \beta}_{\sim} \\
X \otimes A \ar[rr]_-{(id_{\!_{X}} \otimes h)} && X \otimes B }
\end{equation}

An arrow $f : (X,\alpha,\beta) \rightarrow (Y,\alpha',\beta')$ in $\mathcal{Z}_h(\Cc)$ is an arrow $f: X \rightarrow Y$ in $\Cc$ such that the following diagrams are commutative

\begin{equation} \label{diag.eq14}
\xymatrix{
A \otimes X \ar[d]_-{(id_{\!_{A}} \otimes f)} \ar[rr]^{\alpha}_{\sim}
&& X \otimes A \ar[d]^-{ (f \otimes id_{\!_{A}})} \\
A \otimes Y \ar[rr]_{\alpha'}^{\sim} && Y \otimes A }
\hspace{35pt} 
\xymatrix{
B \otimes X \ar[d]_-{(id_{\!_{B}} \otimes f)} \ar[rr]^{\beta}_{\sim}
&& X \otimes B \ar[d]^-{ (f \otimes id_{\!_{B}})} \\
B \otimes Y \ar[rr]_{\beta'}^{\sim} && Y \otimes B }
\end{equation}
\end{definition}

\begin{remark} \label{rem.2}
The category $\mathcal{Z}_h(\Cc)$ was introduced in  \cite[p. 49]{Joyal2} as an essential part of the proof of lemma 7, and the authors implicitly indicated that it is a monoidal category. For  convenience, we explicitly show that  the category $\mathcal{Z}_h(\Cc)$ is monoidal.
\end{remark}

\begin{proposition} \label{p.1} 
Let $h: A \rightarrow B$ be an arrow in $\Cc$. Then the category $\mathcal{Z}_h(\Cc)$ is monoidal with 
\begin{equation} \label{p.eq15}
(X,\alpha,\beta) \otimes (Y,\alpha',\beta') = (X \otimes Y,\bar{\alpha},\bar{\beta}),
\end{equation} where $\bar{\alpha},\bar{\beta}$ are given respectively by the following compositions

\begin{equation} \label{p.eq16}
A \otimes X \otimes Y \xrightarrow[\sim]{\alpha \otimes id_{\!_{Y}}} X \otimes A \otimes Y \xrightarrow[\sim]{id_{\!_{X}} \otimes \alpha'} X \otimes Y \otimes A 
\end{equation} 
\begin{equation} \label{p.eq17}
B \otimes X \otimes Y \xrightarrow[\sim]{\beta \otimes id_{\!_{Y}}} X \otimes B \otimes Y \xrightarrow[\sim]{id_{\!_{X}} \otimes \beta'} X \otimes Y \otimes B 
\end{equation}\\
\end{proposition}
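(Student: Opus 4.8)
The plan is to build the monoidal structure of $\mathcal{Z}_h(\Cc)$ directly on top of that of $\Cc$: the underlying object of $(X,\alpha,\beta)\otimes(Y,\alpha',\beta')$ is $X\otimes Y$ equipped with $\bar\alpha,\bar\beta$ as in (\ref{p.eq16})--(\ref{p.eq17}); the underlying morphism of $f\otimes g$ is $f\otimes g$ computed in $\Cc$; and the unit object, associativity constraint and unit constraints are those of $\Cc$ (suitably decorated with structure maps). Because the forgetful functor $\mathscr{U}:\mathcal{Z}_h(\Cc)\to\Cc$, $(X,\alpha,\beta)\mapsto X$, is faithful, every identity between morphisms of $\mathcal{Z}_h(\Cc)$ that becomes true after applying $\mathscr{U}$ is already true; in particular functoriality of $\otimes$ and the pentagon and triangle axioms will be automatic once the relevant morphisms of $\Cc$ are shown to lift to $\mathcal{Z}_h(\Cc)$. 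By Mac Lane's coherence theorem I would run all the diagram chases below in a strict $\Cc$, reinserting the canonical associativity and unit isomorphisms only at the end; this removes all bracketing bookkeeping.

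The first genuine step is that $\otimes$ is well defined on objects. The maps $\bar\alpha=(id_{X}\otimes\alpha')(\alpha\otimes id_{Y})$ and $\bar\beta=(id_{X}\otimes\beta')(\beta\otimes id_{Y})$ are isomorphisms as composites of isomorphisms, so it remains to check that $(X\otimes Y,\bar\alpha,\bar\beta)$ satisfies (\ref{diag.eq13}). Expanding $\bar\beta\circ(h\otimes id_{X\otimes Y})$ via (\ref{p.eq17}), using the interchange law in $\Cc$ to move $h$ past the first factor, applying the commuting square (\ref{diag.eq13}) for $(X,\alpha,\beta)$ and then the one for $(Y,\alpha',\beta')$, and using interchange once more, one arrives at $(id_{X\otimes Y}\otimes h)\circ\bar\alpha$, which is exactly (\ref{diag.eq13}) for the tensored triple. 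The same bookkeeping — one use of interchange, then (\ref{diag.eq14}) for $f$ and for $g$, then interchange again — shows that for morphisms $f:(X,\alpha,\beta)\to(\tilde X,\tilde\alpha,\tilde\beta)$ and $g:(Y,\alpha',\beta')\to(\tilde Y,\tilde\alpha',\tilde\beta')$ the arrow $f\otimes g$ of $\Cc$ satisfies both squares in (\ref{diag.eq14}) for the tensored triples, hence is a morphism in $\mathcal{Z}_h(\Cc)$; preservation of identities and composition is then inherited verbatim from $\Cc$, so $\otimes:\mathcal{Z}_h(\Cc)\times\mathcal{Z}_h(\Cc)\to\mathcal{Z}_h(\Cc)$ is a bifunctor.

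Finally I would take the unit object to be $(I,\mathfrak{l}_A,\mathfrak{l}_B)$, where $\mathfrak{l}_C:C\otimes I\xrightarrow{\sim} I\otimes C$ is the canonical isomorphism built from the unit constraints (the identity when $\Cc$ is strict); condition (\ref{diag.eq13}) for this triple is precisely the naturality of $\mathfrak{l}$ in $\Cc$ applied to $h$. Likewise, naturality of the associator of $\Cc$, resp. of its left and right unit constraints, shows that $a_{X,Y,Z}$, resp. $\ell_X$ and $r_X$, satisfy (\ref{diag.eq14}) for the appropriate tensored triples, so they promote to morphisms $a$, $\ell$, $r$ of $\mathcal{Z}_h(\Cc)$; naturality of $a,\ell,r$ as transformations and the pentagon and triangle identities transfer from $\Cc$ through the faithful functor $\mathscr{U}$. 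The only real work is thus the small collection of diagram chases against (\ref{diag.eq13}) and (\ref{diag.eq14}), and the main — though mild — obstacle is keeping the associativity constraints under control when $\Cc$ is not strict; working strictly and reintroducing the coherence isomorphisms at the end, which coherence licenses, dissolves that difficulty.
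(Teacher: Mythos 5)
Your proposal is correct and its central step coincides with the paper's proof: the paper's entire argument is the single chase showing $\bar{\beta}(h\otimes id_{X\otimes Y})=(id_{X\otimes Y}\otimes h)\bar{\alpha}$ via the interchange law and the squares (\ref{diag.eq13}) for $(X,\alpha,\beta)$ and $(Y,\alpha',\beta')$, exactly as you describe. You go further than the paper by also verifying that $f\otimes g$ lifts, that the unit object and the coherence constraints promote to $\mathcal{Z}_h(\Cc)$, and that the pentagon and triangle axioms transfer through the faithful forgetful functor — details the paper leaves implicit — so your write-up is, if anything, more complete.
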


\begin{proof}

\vspace{.2cm}

\begin{tabular}{lllll}
$\bar{\beta} (h \otimes id_{\!_{X\otimes Y}}) $  &  $=(id_{\!_{X}} \otimes \beta') (\beta \otimes id_{\!_{Y}})(h \otimes id_{\!_{X\otimes Y}})$\\
&  (definition of $\bar{\beta}$)\\
 & $=(id_{\!_{X}} \otimes \beta') (\beta \otimes id_{\!_{Y}})(h \otimes id_{\!_{X}} \otimes id_{\!_{Y}})$\\
&  ($\otimes$ is a bifunctor)\\
 & $=(id_{\!_{X}} \otimes \beta') (\beta (h \otimes id_{\!_{X}})\otimes id_{\!_{Y}})$\\
 &   (naturality of $\otimes$) \\
  & $=(id_{\!_{X}} \otimes \beta') ((id_{\!_{X}} \otimes h) \alpha \otimes id_{\!_{Y}})$\\
 &   (since $X \in \mathcal{Z}_h(\Cc)$) \\
 & $=(id_{\!_{X}} \otimes \beta') (id_{\!_{X}} \otimes h \otimes id_{\!_{Y}}) (\alpha \otimes id_{\!_{Y}})$\\
 &  (naturality of $\otimes$) \\
 & $=(id_{\!_{X}} \otimes \beta'(h \otimes id_{\!_{Y}})) (\alpha \otimes id_{\!_{Y}})$\\
 &  (naturality of $\otimes$) \\
& $=(id_{\!_{X}} \otimes (id_{\!_{Y}}\otimes h) \alpha') (\alpha \otimes id_{\!_{Y}})$\\
 &    (since $Y \in \mathcal{Z}_h(\Cc)$) \\
 & $=(id_{\!_{X}} \otimes id_{\!_{Y}} \otimes h) (id_{\!_{X}} \otimes\alpha') (\alpha \otimes id_{\!_{Y}})$\\
 &   (naturality of $\otimes$)  \\
 & $=(id_{\!_{X\otimes Y}} \otimes h) (id_{\!_{X}} \otimes\alpha') (\alpha \otimes id_{\!_{Y}})$\\
 &   ($\otimes$ is a bifunctor)  \\
  & $=(id_{\!_{X\otimes Y}} \otimes h) \bar{\alpha} $\\
 &   (definition of $\bar{\alpha}$)  \\
     
\end{tabular}\\
  
\vspace{.3cm}
  
Therefore, $X \otimes Y \in \mathcal{Z}_h(\Cc)$ and thus the category $\mathcal{Z}_h(\Cc)$ is monoidal.

\end{proof}

\vspace{.2cm}
 
\begin{remark} \textbf{•} \label{r.Z.X.Z.h}  
\begin{enumerate}[label=(\roman*)]
\item For all $X \in \Cc$, we have the following evaluation functor 
\begin{equation} \label{diag.embedding}
 \xymatrix{
\mathcal{Z}(\Cc) \ar[r]^-{\mathscr{H}_{\!_{X}}} & \mathcal{Z}_X(\Cc) 
} 
\end{equation} 
where $\mathscr{H}_{\!_{X}}$ defined by $(A,\sigma) \mapsto (A,\sigma_{\!_{X}})$. It turns out that if $(A,\sigma) \in \mathcal{Z}(\Cc)$, then $(A,\sigma_{\!_{X}}) \in \mathcal{Z}_X(\Cc)$, for every $X \in \Cc$. However, to show that an object $(A,\sigma) \in \mathcal{Z}(\Cc)$, by Remark \ref{rem.1}, it suffices to show that $(A,\sigma_{\!_{X}}) \in \mathcal{Z}_X(\Cc)$ for every $X \in \Cc$,  $\sigma$ is natural and the condition \ref{def.eq5} holds as well. 

\item For all $A \in \Cc$, if  $ (X,\alpha,\beta) \in \mathcal{Z}_{id_{\!_{A}}}(\Cc)$, then Definition \ref {def.4} implies  that $\alpha = \beta$. This gives rise to an isomorphism given by

\begin{equation} \label{diag.Z.X.Z.h}
 \xymatrix{
\mathcal{Z}_A(\Cc) \ar@/^/[rr]^-{\mathscr{S}_{\!_{A}}} & & \mathcal{Z}_{id_{\!_{A}}}(\Cc) \ar@/^/[ll]^-{\mathscr{T}_{\!_{A}}}
} 
\end{equation} 

where $\mathscr{S}_{\!_{A}}$ is defined by $(X,\alpha, \alpha) \mapsto (X,\alpha^{-1})$, and  $\mathscr{T}_{\!_{A}}$ is defined by $(X,\alpha) \mapsto (X,\alpha^{-1}, \alpha^{-1})$.\\
Thus, we have $\mathcal{Z}_A(\Cc) \cong  \mathcal{Z}_{id_{\!_{A}}}(\Cc)$.  It turns out that the centralizer category of an object $A$ in $\Cc$ can be identified as the centralizer category of the identity morphism of $A$ in $\Cc$. However, we will explicitly study the centralizer category of an object due to the discussion of part (i). 

\item From Definitions \ref{def.2}, \ref{def.3}, there is an embedding 
$ \mathcal{Z}(\Cc) \hookrightarrow \mathcal{Z}_{\omega}(\Cc)$. 
\end{enumerate}

\end{remark}

\section{\textbf{Cocompleteness}}\label{s.cocomp}
Recall that a  category $\mathfrak{C}$ is cocomplete when every functor $\mathfrak{F}: \mathfrak{D} \rightarrow \mathfrak{C}$,
with $\mathfrak{D}$ a small category has a colimit \cite{Borceux1}. For the basic notions of cocomplete categories and examples, we refer to  \cite{Adamek}, \cite{Borceux1}, or \cite{Schubert}. A  functor is \textit{cocontinuous} if it preserves all small colimits \cite[p. 142]{Freyd}.

\begin{proposition}\label{p.cocomp-cowell} \cite[p. 5]{Abdulwahid} 
Let $CoMon(\Cc)$ be the category of comonoids of $\Cc$ and $\mathscr{U}:CoMon(\Cc)\rightarrow \Cc$ the forgetful functor. If $\Cc$ is cocomplete, then $CoMon(\Cc)$ is cocomplete and $U$ is cocontinuous. 
\end{proposition}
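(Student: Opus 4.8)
The plan is to prove the stronger statement that $\mathscr{U}$ \emph{creates} all small colimits, from which both assertions follow immediately. Let $\mathfrak{F}:\mathfrak{D}\rightarrow CoMon(\Cc)$ be a functor with $\mathfrak{D}$ small, and write $\mathfrak{F}(i)=(C_i,\Delta_i,\varepsilon_i)$ and $\mathfrak{F}(f)=\varphi_f$ for objects $i$ and arrows $f$ of $\mathfrak{D}$. Since $\Cc$ is cocomplete, the underlying diagram $\mathscr{U}\mathfrak{F}$ has a colimit $(C,(\iota_i)_{i\in\mathfrak{D}})$ in $\Cc$, and the first step is to promote $C$ to a comonoid in such a way that each $\iota_i$ becomes a comonoid morphism.

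For the comultiplication, I would check that the arrows $(\iota_i\otimes\iota_i)\circ\Delta_i:C_i\rightarrow C\otimes C$ form a cocone on $\mathscr{U}\mathfrak{F}$: for $f:i\rightarrow j$ in $\mathfrak{D}$, bifunctoriality of $\otimes$ gives $(\iota_j\otimes\iota_j)\circ(\varphi_f\otimes\varphi_f)=(\iota_i\otimes\iota_i)$ since $\iota_j\circ\varphi_f=\iota_i$, while $\varphi_f$ being a comonoid morphism gives $(\varphi_f\otimes\varphi_f)\circ\Delta_i=\Delta_j\circ\varphi_f$; together these yield $(\iota_i\otimes\iota_i)\circ\Delta_i=(\iota_j\otimes\iota_j)\circ\Delta_j\circ\varphi_f$. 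Hence there is a unique $\Delta:C\rightarrow C\otimes C$ with $\Delta\circ\iota_i=(\iota_i\otimes\iota_i)\circ\Delta_i$ for all $i$. Likewise the counits $\varepsilon_i:C_i\rightarrow I$ form a cocone, inducing a unique $\varepsilon:C\rightarrow I$ with $\varepsilon\circ\iota_i=\varepsilon_i$. To verify coassociativity and the counit laws for $(C,\Delta,\varepsilon)$, I would precompose both sides of the relevant identity with $\iota_i$, push the $\iota_i$'s outward using the defining equations for $\Delta$ and $\varepsilon$, bifunctoriality of $\otimes$, and naturality of the associativity and unit constraints of $\Cc$, thereby reducing to the corresponding axiom for $(C_i,\Delta_i,\varepsilon_i)$, and finally cancel the $\iota_i$ using the fact that a colimiting cocone is jointly epimorphic. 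This makes $(C,\Delta,\varepsilon)$ a comonoid, and by construction each $\iota_i:(C_i,\Delta_i,\varepsilon_i)\rightarrow(C,\Delta,\varepsilon)$ is a morphism in $CoMon(\Cc)$.

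It then remains to check the universal property in $CoMon(\Cc)$. Given any cocone $(g_i:(C_i,\Delta_i,\varepsilon_i)\rightarrow(D,\Delta_D,\varepsilon_D))$ over $\mathfrak{F}$ in $CoMon(\Cc)$, let $g:C\rightarrow D$ be the unique arrow of $\Cc$ with $g\circ\iota_i=g_i$. One checks that $\Delta_D\circ g=(g\otimes g)\circ\Delta$ and $\varepsilon_D\circ g=\varepsilon$ by precomposing with each $\iota_i$, using the defining equations for $\Delta,\varepsilon$ and the fact that each $g_i$ is a comonoid morphism, and then invoking joint epimorphy of $(\iota_i)$ once more. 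Thus $g$ is the unique morphism of comonoids factoring the cocone through $(C,(\iota_i))$, so $(C,(\iota_i))$ is a colimit of $\mathfrak{F}$ in $CoMon(\Cc)$. Since $\mathscr{U}$ carries it to the chosen colimit of $\mathscr{U}\mathfrak{F}$, the functor $\mathscr{U}$ creates, hence preserves, all small colimits; in particular $CoMon(\Cc)$ is cocomplete and $\mathscr{U}$ is cocontinuous.

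All the verifications are routine diagram chases; the only two points that require any care are the coherence bookkeeping when one rewrites iterated tensor products (legitimate by Mac Lane's coherence theorem together with naturality of the constraints) and the systematic use of joint epimorphy of colimiting cocones. It is worth stressing that no exactness hypothesis on the functor $\otimes$ is needed here, because $C\otimes C$ enters the argument only as the codomain of $\Delta$ and never has to be realized as a colimit; cocompleteness of $\Cc$ by itself is enough.
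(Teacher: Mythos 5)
Your proof is correct and is exactly the standard argument used in the cited source \cite[p. 5]{Abdulwahid}, to which the paper defers rather than reproducing a proof: compute the colimit of underlying objects, induce $\Delta$ and $\varepsilon$ from the cocones $(\iota_i\otimes\iota_i)\Delta_i$ and $\varepsilon_i$, and verify all axioms by precomposing with the jointly epimorphic $\iota_i$. Your closing observation that no cocontinuity of $\otimes$ is needed here --- precisely because the colimit only ever appears as a \emph{domain} --- is the right point to stress, since it explains why this proposition, unlike Propositions \ref{p.cocomp1}--\ref{p.cocomp3}, carries no hypothesis on $\mathcal{P}_X$ and $\mathcal{Q}_X$.
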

\begin{proof}

\end{proof}

\begin{proposition}\label{p.cocomp1} 
Let $\Cc$ be a cocomplete category and  $h: A \rightarrow B$  an arrow in $\Cc$. If $\mathcal{P}_J$ and $\mathcal{Q}_J$ are cocontinuous $\forall J \in \{ A,B\}$, then $\mathcal{Z}_h(\Cc)$ is cocomplete and the forgetful functor $\mathscr{U}: \mathcal{Z}_h(\Cc) \rightarrow \mathcal{C}$ is cocontinuous. Furthermore, the colimit of objects in $\mathcal{Z}_h(\Cc)$ can be obtained by the corresponding construction for objects in $\Cc$.
\end{proposition}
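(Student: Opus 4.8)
The plan is to construct colimits in $\mathcal{Z}_h(\Cc)$ by lifting the colimit computed in $\Cc$, and then check that this lifted cocone is universal. Concretely, let $D : \mathfrak{D} \to \mathcal{Z}_h(\Cc)$ be a diagram from a small category, with $D(i) = (X_i, \alpha_i, \beta_i)$. Composing with the forgetful functor $\mathscr{U}$ gives a diagram $i \mapsto X_i$ in $\Cc$, which by cocompleteness has a colimit $(X, (\mu_i : X_i \to X)_i)$. The idea is to equip $X$ with the data $(\alpha,\beta)$ making $(X,\alpha,\beta)$ the colimit in $\mathcal{Z}_h(\Cc)$. To build $\alpha : A \otimes X \xrightarrow{\sim} X \otimes A$, I would use that $\mathcal{P}_A = A \otimes -$ is cocontinuous, so $A \otimes X = \mathrm{colim}_i (A \otimes X_i)$ with cocone $(id_A \otimes \mu_i)$, and that $\mathcal{Q}_A = - \otimes A$ is cocontinuous, so $X \otimes A = \mathrm{colim}_i (X_i \otimes A)$ with cocone $(\mu_i \otimes id_A)$. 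The family $\bigl((\mu_i \otimes id_A)\,\alpha_i : A \otimes X_i \to X \otimes A\bigr)_i$ is a cocone over the diagram $i \mapsto A \otimes X_i$ (compatibility is exactly the naturality squares (\ref{diag.eq14}) for the morphisms $D(\phi)$, $\phi$ an arrow of $\mathfrak{D}$), hence induces a unique $\alpha : A \otimes X \to X \otimes A$ with $\alpha\,(id_A \otimes \mu_i) = (\mu_i \otimes id_A)\,\alpha_i$. Symmetrically one obtains $\beta : B \otimes X \to X \otimes B$ from the $\beta_i$. That $\alpha$ and $\beta$ are isomorphisms follows because their inverses are induced in the same way from $\alpha_i^{-1}, \beta_i^{-1}$ (again using cocontinuity of $\mathcal{P}_A,\mathcal{Q}_A,\mathcal{P}_B,\mathcal{Q}_B$), and uniqueness of induced maps forces the composites to be identities.

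Next I would verify that $(X,\alpha,\beta)$ really is an object of $\mathcal{Z}_h(\Cc)$, i.e. that the square (\ref{diag.eq13}) commutes: $\beta\,(h \otimes id_X) = (id_X \otimes h)\,\alpha$. Both sides are maps $A \otimes X \to X \otimes B$; since $A \otimes X = \mathrm{colim}_i(A \otimes X_i)$, it suffices to check equality after precomposing with each $id_A \otimes \mu_i$. Using the defining equations for $\alpha$ and $\beta$, naturality of the tensor bifunctor, and the fact that each $(X_i,\alpha_i,\beta_i) \in \mathcal{Z}_h(\Cc)$ (so $\beta_i(h\otimes id_{X_i}) = (id_{X_i}\otimes h)\alpha_i$), both composites reduce to $(\mu_i \otimes id_B)\,\beta_i\,(h \otimes id_{X_i})$, whence they agree. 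Then each $\mu_i$ is a morphism $(X_i,\alpha_i,\beta_i) \to (X,\alpha,\beta)$ in $\mathcal{Z}_h(\Cc)$: the two commuting squares in (\ref{diag.eq14}) for $\mu_i$ are precisely the defining equations of $\alpha$ and $\beta$ read off at index $i$.

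For universality, given any cocone $\bigl(f_i : (X_i,\alpha_i,\beta_i) \to (Y,\alpha',\beta')\bigr)_i$ in $\mathcal{Z}_h(\Cc)$, the underlying maps $f_i : X_i \to Y$ form a cocone in $\Cc$, so there is a unique $f : X \to Y$ with $f\mu_i = f_i$. It remains to see $f$ is a morphism in $\mathcal{Z}_h(\Cc)$, i.e. it satisfies the two squares of (\ref{diag.eq14}). Each such square is an equation between maps out of $A\otimes X$ (resp. $B \otimes X$), so by cocontinuity of $\mathcal{P}_A$ (resp. $\mathcal{P}_B$) it suffices to verify it after precomposing with $id_A \otimes \mu_i$ (resp. $id_B \otimes \mu_i$); this follows from the corresponding square for $f_i$ together with the defining equations of $\alpha$ and $\beta$ and the relation $f\mu_i = f_i$. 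Finally, uniqueness of $f$ in $\mathcal{Z}_h(\Cc)$ is inherited from uniqueness in $\Cc$ since $\mathscr{U}$ is faithful. This simultaneously shows $\mathscr{U}$ preserves the colimit, hence $\mathscr{U}$ is cocontinuous, and that colimits in $\mathcal{Z}_h(\Cc)$ are computed as in $\Cc$.

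The main obstacle I expect is purely bookkeeping: the construction hinges on the four functors $\mathcal{P}_A, \mathcal{Q}_A, \mathcal{P}_B, \mathcal{Q}_B$ being cocontinuous so that all the relevant tensor products commute with the colimit, and one must repeatedly invoke the universal property of $\mathrm{colim}_i(A \otimes X_i)$ (or the $B$-version) to define maps and to check equations between them. There is no deep difficulty, but care is needed to keep straight which colimit is being used at each step and to confirm that the various induced maps (the structure maps $\alpha,\beta$, their inverses, the comparison map $f$) are genuinely well-defined and unique; the compatibility of the intermediate cocones is in every case an immediate consequence of the morphism axioms (\ref{diag.eq14}) for the arrows of $\mathfrak{D}$ and naturality of $\otimes$.
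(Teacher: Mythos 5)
Your proposal is correct and follows essentially the same route as the paper: lift the colimit $(X,(\mu_i))$ from $\Cc$, use cocontinuity of $\mathcal{P}_A,\mathcal{Q}_A,\mathcal{P}_B,\mathcal{Q}_B$ to realize $A\otimes X$, $X\otimes A$, $B\otimes X$, $X\otimes B$ as colimits, induce $\alpha,\beta$ and their inverses from the cocones built out of the $\alpha_i,\beta_i$ and their inverses, and verify the object axiom, the cocone property, and universality by testing against the colimit injections. The paper's proof is just the fully written-out version of this same argument.
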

\begin{proof}
Let $\mathscr{D}$ be a small category, and let $\mathscr{F}:\mathscr{D} \rightarrow \mathcal{Z}_h(\Cc)$ be a functor. Since $\mathcal{C}$ is a cocomplete category, the functor $\mathscr{U} \mathscr{F}$ has a colimit $(C,(\phi_{\!_{D}})_{\!_{{D} \in \mathscr{D}}})$. Since  $\mathcal{P}_A$ is cocontinuous, $\mathcal{P}_A$ $ \mathscr{U} \mathscr{F}$ has a colimit $(\mathcal{P}_A (C),(\mathcal{P}_A (\phi_{\!_{D}}))_{\!_{{D} \in \mathscr{D}}})$. Equivalently, $(A \otimes C,(id_{\!_{A}} \otimes \phi_{\!_{D}})_{\!_{{D} \in \mathscr{D}}})$ is a colimit of $\mathcal{P}_A$ $ \mathscr{U} \mathscr{F}$. \\
First, we note that the functor $\mathscr{F}:\mathscr{D} \rightarrow \mathcal{Z}_h(\Cc)$ assigns to each object $D \in \mathscr{D}$ an object $(\mathscr{F}D,\alpha_{\!_{\mathscr{F}D}}, \beta_{\!_{\mathscr{F}D}}) \in \mathcal{Z}_h(\Cc)$. We also have $\mathscr{F}f: (\mathscr{F}D,\alpha_{\!_{\mathscr{F}D}}, \beta_{\!_{\mathscr{F}D}}) \rightarrow (\mathscr{F}D',\alpha_{\!_{\mathscr{F}D'}}, \beta_{\!_{\mathscr{F}D'}})$ is an arrow in $\mathcal{Z}_h(\Cc)$ for every arrow $f: D \rightarrow D'$ in $\mathscr{D}$. Thus, we have the following commutative diagrams: 
\begin{equation} \label{diag.eq18}
\xymatrix{
A \otimes D \ar[d]_-{(id_{\!_{A}} \otimes \mathscr{F}f)} \ar[rr]^{\alpha_{\!_{\mathscr{F}D}}}_{\sim}
&& D \otimes A \ar[d]^-{ (\mathscr{F}f \otimes id_{\!_{A}})} \\
A \otimes D' \ar[rr]_{\alpha_{\!_{\mathscr{F}D'}}}^{\sim} && D' \otimes A }
\hspace{35pt} 
\xymatrix{
B \otimes D \ar[d]_-{(id_{\!_{B}} \otimes \mathscr{F}f)} \ar[rr]^{\beta_{\!_{\mathscr{F}D}}}_{\sim}
&& D \otimes B \ar[d]^-{ (\mathscr{F}f \otimes id_{\!_{B}})} \\
B \otimes D' \ar[rr]_{\beta_{\!_{\mathscr{F}D'}}}^{\sim} &&D' \otimes B }
\end{equation} Now consider the following diagram
\begin{equation} \label{diag.eq19}
\xymatrix{
A \otimes \mathscr{F}D \ar[rrrr]^{id_{\!_{A}} \otimes \mathscr{F}f} \ar@/_/[dd]_{\alpha_{\!_{\mathscr{F}D}}}^{\sim} \ar@/_/[drr]_{id_{\!_{A}} \otimes \phi_{\!_{D}}} 
&&&& A \otimes \mathscr{F}D'  \ar@/^/[dll]^{id_{\!_{A}} \otimes \phi_{\!_{D'}}} \ar@/^/[dd]^{\alpha_{\!_{\mathscr{F}D'}}}_{\sim} \\
& & A \otimes C \ar@{.>}[d]^(.5){\exists ! \bar{\alpha}} \\
\mathscr{F}D \otimes A \ar@/_3pc/[rrrr]_{\mathscr{F}f \otimes id_{\!_{A}}} \ar@/_.3pc/[rr]_(.55){\phi_{\!_{D}} \otimes id_{\!_{A}}} & & C \otimes A && \mathscr{F}D' \otimes A \ar@/^.3pc/[ll]^(.55){\phi_{\!_{D'}} \otimes id_{\!_{A}}}
}
\end{equation}

For any $D$ in $\mathscr{D}$, we have  

 \vspace{.2cm}
 
\begin{tabular}{lllll}
$(\phi_{\!_{D'}} \otimes id_{\!_{A}})\alpha_{\!_{\mathscr{F}D'}} (id_{\!_{A}} \otimes \mathscr{F}f) $  &  $= (\phi_{\!_{D'}} \otimes id_{\!_{A}}) (\mathscr{F}f \otimes id_{\!_{A}}) \alpha_{\!_{\mathscr{F}D}} $\\
&  (by \ref{diag.eq18})\\
 & $= (\phi_{\!_{D'}} \mathscr{F}f \otimes id_{\!_{A}})  \alpha_{\!_{\mathscr{F}D}} $\\
&  (naturality of $\otimes$) \\
 & $= (\phi_{\!_{D}} \otimes id_{\!_{A}})  \alpha_{\!_{\mathscr{F}D}} $\\
&  (since $(C,(\phi_{\!_{D}})_{\!_{{D} \in \mathscr{D}}})$) is a cocone on $\mathscr{U} \mathscr{F}$) \\
  \end{tabular}\\
  
\vspace{.2cm}
  
Therefore, $(C \otimes A,((\phi_{\!_{D}} \otimes id_{\!_{A}})  \alpha_{\!_{\mathscr{F}D}})_{\!_{{D} \in \mathscr{D}}})$ is a cocone on $\mathcal{P}_A$ $ \mathscr{U} \mathscr{F}$. Since  $(A \otimes C,(id_{\!_{A}} \otimes \phi_{\!_{D}})_{\!_{{D} \in \mathscr{D}}})$ is a colimit of $\mathcal{P}_A$ $ \mathscr{U} \mathscr{F}$, there exits a unique arrow $\bar{\alpha}: A \otimes C \rightarrow C \otimes A$ in $\Cc$ with $\bar{\alpha} (id_{\!_{A}} \otimes \phi_{\!_{D}}) = (\phi_{\!_{D}} \otimes id_{\!_{A}}) \alpha_{\!_{\mathscr{F}D}} $.\\

Similarly, since  $\mathcal{Q}_A$ is cocontinuous, $\mathcal{Q}_A$ $ \mathscr{U} \mathscr{F}$ has a colimit of $(\mathcal{Q}_A (C),(\mathcal{Q}_A (\phi_{\!_{D}}))_{\!_{{D} \in \mathscr{D}}})$. So $(C\otimes A, (\phi_{\!_{D}} \otimes id_{\!_{A}})_{\!_{{D} \in \mathscr{D}}})$ is a colimit of $\mathcal{Q}_A$ $ \mathscr{U} \mathscr{F}$. \\

Correspondingly, we have 

\vspace{.3cm}

\begin{tabular}{lllll}
$(id_{\!_{A}}\otimes \phi_{\!_{D'}})\alpha_{\!_{\mathscr{F}D'}}^{-1} (\mathscr{F}f \otimes id_{\!_{A}}) $  &  $= (id_{\!_{A}}\otimes \phi_{\!_{D'}}) (id_{\!_{A}} \otimes \mathscr{F}f) \alpha_{\!_{\mathscr{F}D}}^{-1} $\\
&  (by \ref{diag.eq18})\\
 & $= (id_{\!_{A}}  \otimes \phi_{\!_{D'}} \mathscr{F}f)  \alpha_{\!_{\mathscr{F}D}}^{-1} $\\
&  (naturality of $\otimes$) \\
 & $= (id_{\!_{A}}  \otimes \phi_{\!_{D}}) \alpha_{\!_{\mathscr{F}D}}^{-1} $\\
&  (since $(C,(\phi_{\!_{D}})_{\!_{{D} \in \mathscr{D}}})$) is a cocone on $\mathscr{U} \mathscr{F}$) \\
  \end{tabular}\\
  
\vspace{.2cm}
for any $D$ in $\mathscr{D}$. Hence, $(A \otimes C,((id_{\!_{A}}  \otimes \phi_{\!_{D}}) \alpha_{\!_{\mathscr{F}D}}^{-1})_{\!_{{D} \in \mathscr{D}}})$ is a cocone on $\mathcal{P}_A$ $ \mathscr{U} \mathscr{F}$. Since  $(C \otimes A,( \phi_{\!_{D}} \otimes id_{\!_{A}} )_{\!_{{D} \in \mathscr{D}}})$ is a colimit of $\mathcal{Q}_A$ $ \mathscr{U} \mathscr{F}$, there exits a unique arrow $\bar{\alpha}': C \otimes A \rightarrow A \otimes C$ in $\Cc$ with $\bar{\alpha}' (\phi_{\!_{D}} \otimes id_{\!_{A}}) = (id_{\!_{A}} \otimes \phi_{\!_{D}}) \alpha_{\!_{\mathscr{F}D}}^{-1} $.\\

Therefore, we get the following commutative diagram
\begin{equation} \label{diag.eq20}
\xymatrix{
\mathscr{F}D \otimes A\ar[rrrr]^{\mathscr{F}f  \otimes id_{\!_{A}}} \ar@/_/[dd]_{\alpha_{\!_{\mathscr{F}D}}^{-1}}^{\sim} \ar@/_/[drr]_{\phi_{\!_{D}}\otimes id_{\!_{A}}} 
&&&& \mathscr{F}D' \otimes A \ar@/^/[dll]^{\phi_{\!_{D'}}\otimes id_{\!_{A}}} \ar@/^/[dd]^{\alpha_{\!_{\mathscr{F}D'}}^{-1}}_{\sim} \\
& & C \otimes A \ar@{.>}[d]^(.5){\exists ! \bar{\alpha}'} \\
A \otimes \mathscr{F}D \ar@/_3pc/[rrrr]_{id_{\!_{A}}\otimes \mathscr{F}f} \ar@/_.3pc/[rr]_(.55){id_{\!_{A}}\otimes \phi_{\!_{D}} } & & A \otimes C && A \otimes \mathscr{F}D' \ar@/^.3pc/[ll]^(.55){id_{\!_{A}}\otimes \phi_{\!_{D'}} }
}
\end{equation} 

Next, we show that $\bar{\alpha}$ is an invertible arrow. From the commutativity of the diagrams \ref{diag.eq19} and \ref{diag.eq20}, we have 
\begin{center}
•$\bar{\alpha} (id_{\!_{A}} \otimes \phi_{\!_{D}}) = (\phi_{\!_{D}} \otimes id_{\!_{A}}) \alpha_{\!_{\mathscr{F}D}} \Leftrightarrow \bar{\alpha} (id_{\!_{A}} \otimes \phi_{\!_{D}}) \alpha_{\!_{\mathscr{F}D}}^{-1} = (\phi_{\!_{D}} \otimes id_{\!_{A}}) \Leftrightarrow \bar{\alpha} \bar{\alpha}' (\phi_{\!_{D}} \otimes id_{\!_{A}}) = (\phi_{\!_{D}} \otimes id_{\!_{A}})$. \\
\end{center}

Obviously, $(C\otimes A, (\bar{\alpha} \bar{\alpha}' (\phi_{\!_{D}} \otimes id_{\!_{A}}))_{\!_{{D} \in \mathscr{D}}})$ is a cocone on $\mathcal{Q}_A$ $ \mathscr{U} \mathscr{F}$. Since $(C\otimes A, (\phi_{\!_{D}} \otimes id_{\!_{A}})_{\!_{{D} \in \mathscr{D}}})$ is a colimit of $\mathcal{Q}_A$ $ \mathscr{U} \mathscr{F}$, we have $\bar{\alpha} \bar{\alpha}' =   id_{\!_{C \otimes A}}$.\\

From the commutativity of the diagrams \ref{diag.eq19} and \ref{diag.eq20}, we have 
\begin{center}
•$ (id_{\!_{A}} \otimes \phi_{\!_{D}}) \alpha_{\!_{\mathscr{F}D}}^{-1} = \bar{\alpha}' (\phi_{\!_{D}} \otimes id_{\!_{A}}) \Leftrightarrow  (id_{\!_{A}} \otimes \phi_{\!_{D}}) = \bar{\alpha}' (\phi_{\!_{D}} \otimes id_{\!_{A}})  \alpha_{\!_{\mathscr{F}D}} \Leftrightarrow  (id_{\!_{A}} \otimes \phi_{\!_{D}}) = \bar{\alpha}' \bar{\alpha} (id_{\!_{A}} \otimes \phi_{\!_{D}})$. \\
\end{center}

Clearly, $(A\otimes C, (\bar{\alpha}' \bar{\alpha} (id_{\!_{A}} \otimes \phi_{\!_{D}}))_{\!_{{D} \in \mathscr{D}}})$ is a cocone on $\mathcal{P}_A$ $ \mathscr{U} \mathscr{F}$. Since $(A\otimes C, ( id_{\!_{A}} \otimes \phi_{\!_{D}})_{\!_{{D} \in \mathscr{D}}})$ is a colimit of $\mathcal{P}_A$ $ \mathscr{U} \mathscr{F}$, we have $\bar{\alpha}' \bar{\alpha} =   id_{\!_{A \otimes C}}$. Therefore, the arrow $\bar{\alpha}$ is invertible and  $\bar{\alpha}^{-1} =  \bar{\alpha}'$. \\

Replacing the object $A$ by $B$ and following the same strategy we did to get $\bar{\alpha}$, we can similarly get an invertible arrow $\bar{\beta}: B \otimes C \xrightarrow{ \sim } C \otimes B$ and the following commutative diagrams
\begin{equation} \label{diag.eq21}
\xymatrix{
B \otimes \mathscr{F}D \ar[rrrr]^{id_{\!_{B}} \otimes \mathscr{F}f} \ar@/_/[dd]_{\beta_{\!_{\mathscr{F}D}}}^{\sim} \ar@/_/[drr]_{id_{\!_{B}} \otimes \phi_{\!_{D}}} 
&&&& B \otimes \mathscr{F}D'  \ar@/^/[dll]^{id_{\!_{B}} \otimes \phi_{\!_{D'}}} \ar@/^/[dd]^{\beta_{\!_{\mathscr{F}D'}}}_{\sim} \\
& & B \otimes C \ar@{.>}[d]^(.5){\exists ! \bar{\beta}} \\
\mathscr{F}D \otimes B \ar@/_3pc/[rrrr]_{\mathscr{F}f \otimes id_{\!_{B}}} \ar@/_.3pc/[rr]_(.55){\phi_{\!_{D}} \otimes id_{\!_{B}}} & & C \otimes B && \mathscr{F}D' \otimes B \ar@/^.3pc/[ll]^(.55){\phi_{\!_{D'}} \otimes id_{\!_{B}}}
}
\end{equation}

\vspace{.2cm}

\begin{equation} \label{diag.eq22}
\xymatrix{
\mathscr{F}D \otimes B\ar[rrrr]^{\mathscr{F}f  \otimes id_{\!_{B}}} \ar@/_/[dd]_{\beta_{\!_{\mathscr{F}D}}^{-1}}^{\sim} \ar@/_/[drr]_{\phi_{\!_{D}}\otimes id_{\!_{B}}} 
&&&& \mathscr{F}D' \otimes B \ar@/^/[dll]^{\phi_{\!_{D'}}\otimes id_{\!_{B}}} \ar@/^/[dd]^{\beta_{\!_{\mathscr{F}D'}}^{-1}}_{\sim} \\
& & C \otimes B \ar@{.>}[d]^(.5){\exists ! \bar{\beta}'} \\
B \otimes \mathscr{F}D \ar@/_3pc/[rrrr]_{id_{\!_{B}}\otimes \mathscr{F}f} \ar@/_.3pc/[rr]_(.55){id_{\!_{B}}\otimes \phi_{\!_{D}} } & & B \otimes C && B \otimes \mathscr{F}D' \ar@/^.3pc/[ll]^(.55){id_{\!_{B}}\otimes \phi_{\!_{D'}} }
}
\end{equation} \\

To show that $(C,\bar{\alpha},\bar{\beta}) \in \mathcal{Z}_h(\Cc)$, we need to show that the following diagram is commutative.\\
\begin{equation} \label{diag.eq23}
\xymatrix{
A \otimes C \ar[d]_{\bar{\alpha}}^{\sim} \ar[rr]^-{(h \otimes id_{\!_{C}})}
&& B \otimes C \ar[d]^{\bar{\beta}}_{\sim} \\
C \otimes A \ar[rr]_-{(id_{\!_{C}} \otimes h)} && C \otimes B }
\end{equation}\\

To show this, consider the following diagram\\
\begin{equation} \label{diag.eq24}
\xymatrix{
A \otimes \mathscr{F}D \ar[rrrr]^{id_{\!_{A}} \otimes \mathscr{F}f} \ar@/_/[dd]_{\alpha_{\!_{\mathscr{F}D}}}^{\sim} \ar@/_/[drr]_{id_{\!_{A}} \otimes \phi_{\!_{D}}} 
&&&& A \otimes \mathscr{F}D'  \ar@/^/[dll]^{id_{\!_{A}} \otimes \phi_{\!_{D'}}} \ar@/^/[dd]^{\alpha_{\!_{\mathscr{F}D'}}}_{\sim} \\
& & A \otimes C \ar@/_/[ddl]_{\bar{\alpha}} 
\ar@/^/[ddr]^{h \otimes id_{\!_{C}}} \\
\mathscr{F}D \otimes A \ar@/_2pc/[rrrr]^{\mathscr{F}f \otimes id_{\!_{A}}}
\ar@/_/[dr]_{\phi_{\!_{D}} \otimes id_{\!_{A}}} \ar@/_7pc/[ddrr]_(.4){\phi_{\!_{D}} \otimes h}
&&&& \mathscr{F}D' \otimes A \ar@/^/[dl]^{\phi_{\!_{D'}} \otimes id_{\!_{A}}}\ar@/^7pc/[ddll]^(.4){\phi_{\!_{D'}} \otimes h}
\\
&  C \otimes A \ar@/_/[dr]_{id_{\!_{C}} \otimes h}
& & B \otimes C \ar@/^/[dl]^{\bar{\beta}}
\\ & & C \otimes B 
}
\end{equation}\\

 We note that $(C \otimes A,((id_{\!_{C}} \otimes h)\bar{\alpha} (id_{\!_{A}} \otimes \phi_{\!_{D}}))_{\!_{{D} \in \mathscr{D}}})$ is a cocone on $\mathcal{P}_A$ $ \mathscr{U} \mathscr{F}$ since 
 
 \begin{center}
 •$ (id_{\!_{C}} \otimes h)\bar{\alpha} (id_{\!_{A}} \otimes \phi_{\!_{D'}})(id_{\!_{A}} \otimes \mathscr{F}f) =  (id_{\!_{C}} \otimes h) \bar{\alpha} (id_{\!_{A}} \otimes \phi_{\!_{D'}}\mathscr{F}f) =  (id_{\!_{C}} \otimes h) \bar{\alpha} (id_{\!_{A}} \otimes \phi_{\!_{D}})$
 \end{center}   
 for any $D$ in $\mathscr{D}$. Furthermore, we have \\

\begin{tabular}{lllll}
$(id_{\!_{C}} \otimes h) \bar{\alpha} (id_{\!_{A}} \otimes \phi_{\!_{D}}) $  &  $= (id_{\!_{C}} \otimes h) (\phi_{\!_{D}} \otimes id_{\!_{A}}) \alpha_{\!_{\mathscr{F}D}} $\\
&  (by \ref{diag.eq19})\\
 & $= (\phi_{\!_{D}} \otimes h) \alpha_{\!_{\mathscr{F}D}} $\\
&  (naturality of $\otimes$) \\
 & $= (\phi_{\!_{D}} \otimes id_{\!_{B}}) (id_{\!_{\mathscr{F}D}} \otimes h)\alpha_{\!_{\mathscr{F}D}} $\\
&  (naturality of $\otimes$) \\
& $= (\phi_{\!_{D}} \otimes id_{\!_{B}}) \beta_{\!_{\mathscr{F}D}} (h \otimes id_{\!_{\mathscr{F}D}}) $\\
&  (since $(\mathscr{F}D,\alpha_{\!_{\mathscr{F}D}}, \beta_{\!_{\mathscr{F}D}}) \in \mathcal{Z}_h(\Cc), \forall D \in \mathscr{D}$) \\
& $= \bar{\beta} (id_{\!_{B}} \otimes \phi_{\!_{D}}) (h \otimes id_{\!_{\mathscr{F}D}}) $\\
&  (by \ref{diag.eq21})\\
& $= \bar{\beta} (h \otimes \phi_{\!_{D}}) $\\
&  (naturality of $\otimes$) \\
& $= \bar{\beta} (h \otimes id_{\!_{C}})(id_{\!_{A}} \otimes \phi_{\!_{D}}) $ \\
&  (naturality of $\otimes$) \\
  \end{tabular}\\
  
\vspace{.2cm}

Since  $(A \otimes C,(id_{\!_{A}} \otimes \phi_{\!_{D}})_{\!_{{D} \in \mathscr{D}}})$ is a colimit of $\mathcal{P}_A$ $ \mathscr{U} \mathscr{F}$, we must have $(id_{\!_{C}} \otimes h) \bar{\alpha}  =  \bar{\beta} (h \otimes id_{\!_{C}})$.\\
Therefore, $(C,\bar{\alpha},\bar{\beta}) \in \mathcal{Z}_h(\Cc)$ and $\phi_{\!_{D}}$ is an arrow in $\mathcal{Z}_h(\Cc), \forall D \in \mathscr{D}$. Thus,  $((C,\bar{\alpha},\bar{\beta}),(\phi_{\!_{D}})_{\!_{{D} \in \mathscr{D}}})$ is a cocone on $\mathscr{F}$. To show that  $((C,\bar{\alpha},\bar{\beta}),(\phi_{\!_{D}})_{\!_{{D} \in \mathscr{D}}})$ is a colimit of $\mathscr{F}$, let $((C',\lambda,\gamma),(\psi_{\!_{D}})_{\!_{{D} \in \mathscr{D}}})$ be a cocone on $\mathscr{F}$. Since $(C,(\phi_{\!_{D}})_{\!_{{D} \in \mathscr{D}}})$ is a colimit of $\mathscr{U} \mathscr{F}$, there exists a unique morphism $g: C \rightarrow C'$ in $\mathcal{C}$ with $g\phi_{\!_{D}} = \psi_{\!_{D}}$ for every $D \in \mathscr{D}$. The proof is complete whence we show that $g$ is a morphism in $\mathcal{Z}_h(\Cc)$. Explicitly, we need to show that the diagrams
 
\begin{equation} \label{diag.eq25}
\xymatrix{
A \otimes C \ar[d]_-{(id_{\!_{A}} \otimes g)} \ar[rr]^{\bar{\alpha}}_{\sim}
&& C \otimes A \ar[d]^-{ (g \otimes id_{\!_{A}})} \\
A \otimes C' \ar[rr]_{\lambda}^{\sim} && C' \otimes A }
\hspace{35pt} 
\xymatrix{
B \otimes C \ar[d]_-{(id_{\!_{B}} \otimes g)} \ar[rr]^{\bar{\beta}}_{\sim}
&& C \otimes B \ar[d]^-{ (g \otimes id_{\!_{B}})} \\
B \otimes C' \ar[rr]_{\gamma}^{\sim} && C' \otimes B }
\end{equation}\\
commute. Consider the diagram 
\begin{equation} \label{diag.eq26}
\xymatrix{
A \otimes \mathscr{F}D \ar[rrrr]^{id_{\!_{A}} \otimes \mathscr{F}f} \ar@/_/[dd]_{\alpha_{\!_{\mathscr{F}D}}}^{\sim} \ar@/_/[drr]_{id_{\!_{A}} \otimes \phi_{\!_{D}}} 
&&&& A \otimes \mathscr{F}D'  \ar@/^/[dll]^{id_{\!_{A}} \otimes \phi_{\!_{D'}}} \ar@/^/[dd]^{\alpha_{\!_{\mathscr{F}D'}}}_{\sim} \\
& & A \otimes C \ar@/_/[ddl]_{\bar{\alpha}} 
\ar@/^/[ddr]^{id_{\!_{A}} \otimes g} \\
\mathscr{F}D \otimes A \ar@/_2pc/[rrrr]^{\mathscr{F}f \otimes id_{\!_{A}}}
\ar@/_/[dr]_{\phi_{\!_{D}} \otimes id_{\!_{A}}} \ar@/_7pc/[ddrr]_(.4){\psi_{\!_{D}} \otimes id_{\!_{A}}}
&&&& \mathscr{F}D' \otimes A \ar@/^/[dl]^{\phi_{\!_{D'}} \otimes id_{\!_{A}}}\ar@/^7pc/[ddll]^(.4){\psi_{\!_{D'}} \otimes id_{\!_{A}}}
\\
&  C \otimes A \ar@/_/[dr]_{g \otimes id_{\!_{A}}}
& & A \otimes C' \ar@/^/[dl]^{\lambda}
\\ & & C' \otimes A 
}
\end{equation}\\

Notably, $(C' \otimes A,((g \otimes id_{\!_{A}}) \bar{\alpha} (id_{\!_{A}} \otimes \phi_{\!_{D}}))_{\!_{{D} \in \mathscr{D}}})$ is a cocone on $\mathcal{P}_A$ $ \mathscr{U} \mathscr{F}$  since for every $D \in \mathscr{D}$, we have  
 
 \begin{center}
 •$ (g \otimes id_{\!_{A}})\bar{\alpha} (id_{\!_{A}} \otimes \phi_{\!_{D'}})(id_{\!_{A}} \otimes \mathscr{F}f) =  (g \otimes id_{\!_{A}}) \bar{\alpha} (id_{\!_{A}} \otimes \phi_{\!_{D'}}\mathscr{F}f) =  (g \otimes id_{\!_{A}}) \bar{\alpha} (id_{\!_{A}} \otimes \phi_{\!_{D}})$
 \end{center}
 
\vspace{.2cm}

We also  have  \\

\begin{tabular}{lllll}
$(g \otimes id_{\!_{A}}) \bar{\alpha} (id_{\!_{A}} \otimes \phi_{\!_{D}}) $  &  $=(g \otimes id_{\!_{A}}) (\phi_{\!_{D}} \otimes id_{\!_{A}}) \alpha_{\!_{\mathscr{F}D}} $\\
&  (by \ref{diag.eq19})\\
 & $= (g \phi_{\!_{D}} \otimes id_{\!_{A}}) \alpha_{\!_{\mathscr{F}D}} $\\
&  (naturality of $\otimes$) \\
 &$= (\psi_{\!_{D}} \otimes id_{\!_{A}}) \alpha_{\!_{\mathscr{F}D}} $\\
&  (since $g\phi_{\!_{D}} = \psi_{\!_{D}}$, $\forall D \in \mathscr{D}$) \\
& $= \lambda (id_{\!_{A}} \otimes \psi_{\!_{D}}) $\\
&  (since $\psi_{\!_{D}}$ is a morphism in $\mathcal{Z}_h(\Cc)$,  $\forall D \in \mathscr{D}$) \\
& $= \lambda (id_{\!_{A}} \otimes g\phi_{\!_{D}}) $\\
&   (since $g\phi_{\!_{D}} = \psi_{\!_{D}}$, $\forall D \in \mathscr{D}$) \\
& $= \lambda (id_{\!_{A}} \otimes g) (id_{\!_{A}} \otimes \phi_{\!_{D}}) $\\
&  (naturality of $\otimes$) \\
  \end{tabular}\\
  
  for any $D$ in $\mathscr{D}$. Since $(A \otimes C,(id_{\!_{A}} \otimes \phi_{\!_{D}})_{\!_{{D} \in \mathscr{D}}})$ is a colimit of $\mathcal{P}_A$ $ \mathscr{U} \mathscr{F}$, it follows   
 \begin{equation} \label{diag.eq27}
(g \otimes id_{\!_{A}}) \bar{\alpha}  = \lambda (id_{\!_{A}} \otimes g) .
\end{equation}\\

\vspace{.2cm}

Similarly, replacing $A$ by $B$ and considering the following diagram

\begin{equation} \label{diag.eq28}
\xymatrix{
B \otimes \mathscr{F}D \ar[rrrr]^{id_{\!_{B}} \otimes \mathscr{F}f} \ar@/_/[dd]_{\beta_{\!_{\mathscr{F}D}}}^{\sim} \ar@/_/[drr]_{id_{\!_{B}} \otimes \phi_{\!_{D}}} 
&&&& B \otimes \mathscr{F}D'  \ar@/^/[dll]^{id_{\!_{B}} \otimes \phi_{\!_{D'}}} \ar@/^/[dd]^{\beta_{\!_{\mathscr{F}D'}}}_{\sim} \\
& & B \otimes C \ar@/_/[ddl]_{\bar{\beta}} 
\ar@/^/[ddr]^{id_{\!_{B}} \otimes g} \\
\mathscr{F}D \otimes B \ar@/_2pc/[rrrr]^{\mathscr{F}f \otimes id_{\!_{B}}}
\ar@/_/[dr]_{\phi_{\!_{D}} \otimes id_{\!_{B}}} \ar@/_7pc/[ddrr]_(.4){\psi_{\!_{D}} \otimes id_{\!_{B}}}
&&&& \mathscr{F}D' \otimes B \ar@/^/[dl]^{\phi_{\!_{D'}} \otimes id_{\!_{B}}}\ar@/^7pc/[ddll]^(.4){\psi_{\!_{D'}} \otimes id_{\!_{B}}}
\\
&  C \otimes B \ar@/_/[dr]_{g \otimes id_{\!_{B}}}
& & B \otimes C' \ar@/^/[dl]^{\gamma}
\\ & & C' \otimes B 
}
\end{equation}\\

gives us  
\begin{equation} \label{diag.eq29}
(g \otimes id_{\!_{B}}) \bar{\beta}  = \gamma (id_{\!_{B}} \otimes g) .
\end{equation}\\ 

From \ref{diag.eq27} and \ref{diag.eq29}, we have $g$ is a morphism in $\mathcal{Z}_h(\Cc)$. Thus,  $((C,\bar{\alpha},\bar{\beta}),(\phi_{\!_{D}})_{\!_{{D} \in \mathscr{D}}})$ is a colimit of  $\mathscr{F}$,   and the proof is complete.
\end{proof}

\begin{proposition}\label{p.cocomp2} 
Let $\mathcal{C}$  be a cocomplete category  and $X$  an object in $\Cc$. If $\mathcal{P}_X,\mathcal{Q}_X$ are cocontinuous, then $\mathcal{Z}_X(\Cc)$ is cocomplete and the forgetful functor $\mathscr{U}: \mathcal{Z}_X(\Cc) \rightarrow \mathcal{C}$ is cocontinuous. Moreover, the colimit of objects in $\mathcal{Z}_X(\Cc)$ can be obtained by the corresponding construction for objects in $\Cc$.
\end{proposition}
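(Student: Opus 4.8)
The plan is to reproduce, in a simplified form, the argument of Proposition~\ref{p.cocomp1}. By Definition~\ref{def.3} an object of $\mathcal{Z}_X(\Cc)$ is a pair $(A,\alpha)$ carrying a single structure isomorphism $\alpha\colon A\otimes X\xrightarrow{\sim}X\otimes A$, with no coherence condition of the type~\ref{diag.eq13} imposed, so the part of the proof of Proposition~\ref{p.cocomp1} that checks the commutativity of~\ref{diag.eq23} is simply dropped here. Let $\mathscr{D}$ be a small category and $\mathscr{F}\colon\mathscr{D}\rightarrow\mathcal{Z}_X(\Cc)$ a functor, writing $\mathscr{F}D=(\mathscr{F}D,\alpha_{\!_{\mathscr{F}D}})$. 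Since $\Cc$ is cocomplete, $\mathscr{U}\mathscr{F}$ has a colimit $(C,(\phi_{\!_{D}})_{\!_{D\in\mathscr{D}}})$ in $\Cc$; and since $\mathcal{P}_X$ and $\mathcal{Q}_X$ are cocontinuous, $(X\otimes C,(id_{\!_{X}}\otimes\phi_{\!_{D}})_{\!_{D\in\mathscr{D}}})$ is a colimit of $\mathcal{P}_X\mathscr{U}\mathscr{F}$ and $(C\otimes X,(\phi_{\!_{D}}\otimes id_{\!_{X}})_{\!_{D\in\mathscr{D}}})$ is a colimit of $\mathcal{Q}_X\mathscr{U}\mathscr{F}$.

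Next I would construct the structure isomorphism $\bar{\alpha}\colon C\otimes X\rightarrow X\otimes C$. For each arrow $f\colon D\rightarrow D'$ of $\mathscr{D}$ the morphism $\mathscr{F}f$ lies in $\mathcal{Z}_X(\Cc)$, so the square~\ref{diag.eq10} commutes for it; combined with the cocone identities $\phi_{\!_{D'}}\mathscr{F}f=\phi_{\!_{D}}$ this shows that $\big((id_{\!_{X}}\otimes\phi_{\!_{D}})\,\alpha_{\!_{\mathscr{F}D}}\big)_{\!_{D\in\mathscr{D}}}$ is a cocone on $\mathcal{Q}_X\mathscr{U}\mathscr{F}$ with vertex $X\otimes C$, hence factors uniquely through $\bar{\alpha}$ with $\bar{\alpha}\,(\phi_{\!_{D}}\otimes id_{\!_{X}})=(id_{\!_{X}}\otimes\phi_{\!_{D}})\,\alpha_{\!_{\mathscr{F}D}}$. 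Applying the inverse squares to the $\alpha_{\!_{\mathscr{F}D}}^{-1}$ gives symmetrically a cocone on $\mathcal{P}_X\mathscr{U}\mathscr{F}$ with vertex $C\otimes X$, and hence a unique $\bar{\alpha}'\colon X\otimes C\rightarrow C\otimes X$ with $\bar{\alpha}'\,(id_{\!_{X}}\otimes\phi_{\!_{D}})=(\phi_{\!_{D}}\otimes id_{\!_{X}})\,\alpha_{\!_{\mathscr{F}D}}^{-1}$. That $\bar{\alpha}'\bar{\alpha}=id_{\!_{C\otimes X}}$ and $\bar{\alpha}\bar{\alpha}'=id_{\!_{X\otimes C}}$ follows from two colimit-uniqueness arguments, one over the colimit $(C\otimes X,(\phi_{\!_{D}}\otimes id_{\!_{X}}))$ of $\mathcal{Q}_X\mathscr{U}\mathscr{F}$ and one over the colimit $(X\otimes C,(id_{\!_{X}}\otimes\phi_{\!_{D}}))$ of $\mathcal{P}_X\mathscr{U}\mathscr{F}$, exactly as for $\bar{\alpha}$ in the proof of Proposition~\ref{p.cocomp1}. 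Thus $(C,\bar{\alpha})\in\mathcal{Z}_X(\Cc)$, and the defining relation for $\bar{\alpha}$ says precisely that every $\phi_{\!_{D}}\colon\mathscr{F}D\rightarrow(C,\bar{\alpha})$ is an arrow of $\mathcal{Z}_X(\Cc)$, so $\big((C,\bar{\alpha}),(\phi_{\!_{D}})_{\!_{D\in\mathscr{D}}}\big)$ is a cocone on $\mathscr{F}$.

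For universality, let $\big((C',\lambda),(\psi_{\!_{D}})_{\!_{D\in\mathscr{D}}}\big)$ be any cocone on $\mathscr{F}$. The colimit $(C,(\phi_{\!_{D}}))$ of $\mathscr{U}\mathscr{F}$ gives a unique $g\colon C\rightarrow C'$ in $\Cc$ with $g\phi_{\!_{D}}=\psi_{\!_{D}}$, and it remains to check that $g$ is an arrow of $\mathcal{Z}_X(\Cc)$, that is, that $\lambda\,(g\otimes id_{\!_{X}})=(id_{\!_{X}}\otimes g)\,\bar{\alpha}$. Precomposing both sides with $\phi_{\!_{D}}\otimes id_{\!_{X}}$ and using the relation defining $\bar{\alpha}$, the identities $g\phi_{\!_{D}}=\psi_{\!_{D}}$, and the fact that each $\psi_{\!_{D}}$ is an arrow of $\mathcal{Z}_X(\Cc)$, both sides reduce to $(id_{\!_{X}}\otimes\psi_{\!_{D}})\,\alpha_{\!_{\mathscr{F}D}}$; since $(C\otimes X,(\phi_{\!_{D}}\otimes id_{\!_{X}}))$ is a colimit of $\mathcal{Q}_X\mathscr{U}\mathscr{F}$, the two sides coincide. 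Hence $g$ is the unique comparison arrow in $\mathcal{Z}_X(\Cc)$, so $\big((C,\bar{\alpha}),(\phi_{\!_{D}})\big)$ is a colimit of $\mathscr{F}$; since $\mathscr{U}$ sends this colimit cocone to the chosen colimit of $\mathscr{U}\mathscr{F}$, it is cocontinuous and colimits in $\mathcal{Z}_X(\Cc)$ are computed as in $\Cc$. The only point requiring care is the invertibility of $\bar{\alpha}$: this is exactly where both $\mathcal{P}_X$ and $\mathcal{Q}_X$ --- not merely one of them --- have to be cocontinuous, so that the colimit data genuinely land in $\mathcal{Z}_X(\Cc)$.

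Alternatively, and more briefly, the statement follows from Proposition~\ref{p.cocomp1} via Remark~\ref{r.Z.X.Z.h}(ii): taking $h=id_{\!_{X}}$ there one has $A=B=X$, so the present hypothesis on $\mathcal{P}_X$ and $\mathcal{Q}_X$ is exactly that of Proposition~\ref{p.cocomp1}, which gives that $\mathcal{Z}_{id_{\!_{X}}}(\Cc)$ is cocomplete with cocontinuous forgetful functor; transporting along the isomorphism $\mathcal{Z}_X(\Cc)\cong\mathcal{Z}_{id_{\!_{X}}}(\Cc)$, which is the identity on underlying objects and arrows and hence commutes with the forgetful functors, yields the claim. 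I would present the direct argument above, as it is self-contained, and merely remark on this shortcut.
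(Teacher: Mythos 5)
Your proof is correct and follows essentially the same route as the paper's: construct $\bar{\alpha}$ (the paper's $\mu_{\!_{X}}$) and its candidate inverse as unique comparison arrows out of the colimits $(C\otimes X,(\phi_{\!_{D}}\otimes id_{\!_{X}}))$ and $(X\otimes C,(id_{\!_{X}}\otimes\phi_{\!_{D}}))$, verify invertibility by two colimit-uniqueness arguments, and check that the comparison arrow $g$ to any competing cocone is a morphism of $\mathcal{Z}_X(\Cc)$ by precomposing with $\phi_{\!_{D}}\otimes id_{\!_{X}}$. The shortcut you mention via $\mathcal{Z}_X(\Cc)\cong\mathcal{Z}_{id_{\!_{X}}}(\Cc)$ and Proposition \ref{p.cocomp1} is a legitimate observation consistent with Remark \ref{r.Z.X.Z.h}(ii), though the paper, like you, opts for the self-contained direct argument.
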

\begin{proof}
Let $\mathscr{D}$ be a small category, and let $\mathscr{F}:\mathscr{D} \rightarrow \mathcal{Z}_X(\Cc)$ be a functor. Since $\mathcal{C}$ is a cocomplete category, $\mathscr{U} \mathscr{F}$ has a colimit $(C,(\phi_{\!_{D}})_{\!_{{D} \in \mathscr{D}}})$. Since  $\mathcal{P}_X$ is cocontinuous, $(\mathcal{P}_X(C),(\mathcal{P}_X(\phi_{\!_{D}}))_{\!_{{D} \in \mathscr{D}}})$ is a colimit of the functor $\mathcal{P}_X$ $ \mathscr{U} \mathscr{F}$.
Thus, $(X \otimes C,(id_{\!_{X}} \otimes \phi_{\!_{D}})_{\!_{{D} \in \mathscr{D}}})$ is a colimit of $\mathcal{P}_X$ $ \mathscr{U} \mathscr{F}$. \\
First, we note that the functor $\mathscr{F}:\mathscr{D} \rightarrow \mathcal{Z}_X(\Cc)$ assigns to each object $D \in \mathscr{D}$ an object $(\mathscr{F}D,\alpha_{\!_{\mathscr{F}D}}) \in \mathcal{Z}_X(\Cc)$. In addition, we have $\mathscr{F}f: (\mathscr{F}D,\alpha_{\!_{\mathscr{F}D}}) \rightarrow (\mathscr{F}D',\alpha_{\!_{\mathscr{F}D'}})$ is an arrow in $\mathcal{Z}_X(\Cc)$ for every arrow $f: D \rightarrow D'$ in $\mathscr{D}$. By \ref{diag.eq10}, we have the following commutative diagram 

\begin{equation} \label{diag.eq30}
\xymatrix{
\mathscr{F}D \otimes X \ar[d]_-{\alpha_{\!_{\mathscr{F}D}}}^{\sim} \ar[rr]^-{(\mathscr{F}f \otimes id_{\!_{X}})}
& & \mathscr{F}D' \otimes X \ar[d]^-{ \alpha_{\!_{\mathscr{F}D'}}}_{\sim} \\
X \otimes \mathscr{F}D \ar[rr]_-{(id_{\!_{X}} \otimes \mathscr{F}f)} & & X \otimes \mathscr{F}D'}
\end{equation}\\

Consider the diagram
\begin{equation} \label{diag.eq31}
\xymatrix{
\mathscr{F}D \otimes X\ar[rrrr]^{\mathscr{F}f  \otimes id_{\!_{X}}} \ar@/_/[dd]_{\alpha_{\!_{\mathscr{F}D}}}^{\sim} \ar@/_/[drr]_{\phi_{\!_{D}}\otimes id_{\!_{X}}} 
&&&& \mathscr{F}D' \otimes X \ar@/^/[dll]^{\phi_{\!_{D'}}\otimes id_{\!_{X}}} \ar@/^/[dd]^{\alpha_{\!_{\mathscr{F}D'}}}_{\sim} \\
& & C \otimes X \ar@{.>}[d]^(.5){\exists ! \mu_{\!_{X}}} \\
X \otimes \mathscr{F}D \ar@/_3pc/[rrrr]_{id_{\!_{X}}\otimes \mathscr{F}f} \ar@/_.3pc/[rr]_(.55){id_{\!_{X}}\otimes \phi_{\!_{D}} } & & X \otimes C && X \otimes \mathscr{F}D' \ar@/^.3pc/[ll]^(.55){id_{\!_{X}}\otimes \phi_{\!_{D'}} }
}
\end{equation} 

Since  $\mathcal{Q}_X$ is cocontinuous, $(\mathcal{Q}_X (C),(\mathcal{Q}_X (\phi_{\!_{D}}))_{\!_{{D} \in \mathscr{D}}})$ is a colimit of $\mathcal{Q}_X$ $ \mathscr{U} \mathscr{F}$.  
So $(C\otimes X, (\phi_{\!_{D}} \otimes id_{\!_{X}})_{\!_{{D} \in \mathscr{D}}})$ is a colimit of $\mathcal{Q}_X$ $ \mathscr{U} \mathscr{F}$. We also have 

\vspace{.2cm}

\begin{tabular}{lllll}
$(id_{\!_{X}}\otimes \phi_{\!_{D'}})\alpha_{\!_{\mathscr{F}D'}} (\mathscr{F}f \otimes id_{\!_{X}}) $  &  $= (id_{\!_{X}}\otimes \phi_{\!_{D'}}) (id_{\!_{X}} \otimes \mathscr{F}f) \alpha_{\!_{\mathscr{F}D}} $\\
&  (by \ref{diag.eq30})\\
 & $= (id_{\!_{X}}  \otimes \phi_{\!_{D'}} \mathscr{F}f)  \alpha_{\!_{\mathscr{F}D}} $\\
&  (naturality of $\otimes$) \\
 & $= (id_{\!_{X}}  \otimes \phi_{\!_{D}}) \alpha_{\!_{\mathscr{F}D}} $\\
&  (since $(C,(\phi_{\!_{D}})_{\!_{{D} \in \mathscr{D}}})$) is a cocone on $\mathscr{U} \mathscr{F}$) \\
  \end{tabular}\\
  
\vspace{.2cm}
for any $D$ in $\mathscr{D}$. Hence, $(X \otimes C,((id_{\!_{X}}  \otimes \phi_{\!_{D}}) \alpha_{\!_{\mathscr{F}D}})_{\!_{{D} \in \mathscr{D}}})$ is a cocone on $\mathcal{P}_X$ $ \mathscr{U} \mathscr{F}$. Since  $(C \otimes X,( \phi_{\!_{D}} \otimes id_{\!_{X}} )_{\!_{{D} \in \mathscr{D}}})$ is a colimit of $\mathcal{Q}_X$ $ \mathscr{U} \mathscr{F}$, there exits a unique arrow $\mu_{\!_{X}}: C \otimes X \rightarrow X \otimes C$ in $\Cc$ with $\mu_{\!_{X}} (\phi_{\!_{D}} \otimes id_{\!_{X}}) = (id_{\!_{X}} \otimes \phi_{\!_{D}}) \alpha_{\!_{\mathscr{F}D}} $.\\

Similarly, we consider the diagram
\begin{equation} \label{diag.eq32}
\xymatrix{
X \otimes \mathscr{F}D \ar[rrrr]^{id_{\!_{X}} \otimes \mathscr{F}f} \ar@/_/[dd]_{\alpha_{\!_{\mathscr{F}D}}^{-1}}^{\sim} \ar@/_/[drr]_{id_{\!_{X}} \otimes \phi_{\!_{D}}} 
&&&& X \otimes \mathscr{F}D'  \ar@/^/[dll]^{id_{\!_{X}} \otimes \phi_{\!_{D'}}} \ar@/^/[dd]^{\alpha_{\!_{\mathscr{F}D'}}^{-1}}_{\sim} \\
& & X \otimes C \ar@{.>}[d]^(.5){\exists ! \nu_{\!_{X}}} \\
\mathscr{F}D \otimes X \ar@/_3pc/[rrrr]_{\mathscr{F}f \otimes id_{\!_{X}}} \ar@/_.3pc/[rr]_(.55){\phi_{\!_{D}} \otimes id_{\!_{X}}} & & C \otimes X && \mathscr{F}D' \otimes X \ar@/^.3pc/[ll]^(.55){\phi_{\!_{D'}} \otimes id_{\!_{X}}}
}
\end{equation}

\vspace{.2cm}

We notice that 

 \vspace{.3cm}
 
\begin{tabular}{lllll}
$(\phi_{\!_{D'}} \otimes id_{\!_{X}})\alpha_{\!_{\mathscr{F}D'}}^{-1} (id_{\!_{X}} \otimes \mathscr{F}f) $  &  $= (\phi_{\!_{D}} \otimes id_{\!_{X}}) (\mathscr{F}f \otimes id_{\!_{X}}) \alpha_{\!_{\mathscr{F}D}}^{-1} $\\
&  (by \ref{diag.eq30})\\
 & $= (\phi_{\!_{D'}} \mathscr{F}f \otimes id_{\!_{X}})  \alpha_{\!_{\mathscr{F}D}}^{-1} $\\
&  (naturality of $\otimes$) \\
 & $= (\phi_{\!_{D}} \otimes id_{\!_{X}})  \alpha_{\!_{\mathscr{F}D}}^{-1} $\\
&  (since $(C,(\phi_{\!_{D}})_{\!_{{D} \in \mathscr{D}}})$) is a cocone on $\mathscr{U} \mathscr{F}$) \\
  \end{tabular}\\
  
\vspace{.2cm}
  
  for any $D$ in $\mathscr{D}$. Therefore, $(C \otimes X,((\phi_{\!_{D}} \otimes id_{\!_{X}})  \alpha_{\!_{\mathscr{F}D}}^{-1})_{\!_{{D} \in \mathscr{D}}})$ is a cocone on $\mathcal{P}_X$ $ \mathscr{U} \mathscr{F}$. Since  $(X \otimes C,(id_{\!_{X}} \otimes \phi_{\!_{D}})_{\!_{{D} \in \mathscr{D}}})$ is a colimit of $\mathcal{P}_X$ $ \mathscr{U} \mathscr{F}$, there exits a unique arrow $\nu_{\!_{X}}: X \otimes C \rightarrow C \otimes X$ in $\Cc$ with $\nu_{\!_{X}} (id_{\!_{X}} \otimes \phi_{\!_{D}}) = (\phi_{\!_{D}} \otimes id_{\!_{X}}) \alpha_{\!_{\mathscr{F}D}}^{-1} $.\\

Next, we show that $\mu_{\!_{X}}$ is an invertible arrow. 
From the commutativity of the diagrams \ref{diag.eq31} and \ref{diag.eq32}, we have 
\begin{center}
•$ (id_{\!_{X}} \otimes \phi_{\!_{D}}) \alpha_{\!_{\mathscr{F}D}} = \mu_{\!_{X}} (\phi_{\!_{D}} \otimes id_{\!_{X}}) \Leftrightarrow  (id_{\!_{X}} \otimes \phi_{\!_{D}}) = \mu_{\!_{X}} (\phi_{\!_{D}} \otimes id_{\!_{X}})  \alpha_{\!_{\mathscr{F}D}}^{-1} \Leftrightarrow  (id_{\!_{X}} \otimes \phi_{\!_{D}}) = \mu_{\!_{X}} \nu_{\!_{X}} (id_{\!_{X}} \otimes \phi_{\!_{D}})$. \\
\end{center}

Obviously, $(X\otimes C, (\mu_{\!_{X}} \nu_{\!_{X}} (id_{\!_{X}} \otimes \phi_{\!_{D}}))_{\!_{{D} \in \mathscr{D}}})$ is a cocone on $\mathcal{P}_X$ $ \mathscr{U} \mathscr{F}$. Since $(X\otimes C, ( id_{\!_{X}} \otimes \phi_{\!_{D}})_{\!_{{D} \in \mathscr{D}}})$ is a colimit of $\mathcal{P}_X$ $ \mathscr{U} \mathscr{F}$, we have $\mu_{\!_{X}} \nu_{\!_{X}} =   id_{\!_{X \otimes C}}$.\\

In a similar way, from the commutativity of the diagrams \ref{diag.eq31} and \ref{diag.eq32}, we have 
\begin{center}
•$\nu_{\!_{X}} (id_{\!_{X}} \otimes \phi_{\!_{D}}) = (\phi_{\!_{D}} \otimes id_{\!_{X}}) \alpha_{\!_{\mathscr{F}D}}^{-1} \Leftrightarrow \nu_{\!_{X}} (id_{\!_{X}} \otimes \phi_{\!_{D}}) \alpha_{\!_{\mathscr{F}D}} = (\phi_{\!_{D}} \otimes id_{\!_{X}}) \Leftrightarrow \nu_{\!_{X}} \mu_{\!_{X}} (\phi_{\!_{D}} \otimes id_{\!_{X}}) = (\phi_{\!_{D}} \otimes id_{\!_{X}})$. \\
\end{center}

Clearly, $(C\otimes X, (\nu_{\!_{X}} \mu_{\!_{X}} (\phi_{\!_{D}} \otimes id_{\!_{X}}))_{\!_{{D} \in \mathscr{D}}})$ is a cocone on $\mathcal{Q}_X$ $ \mathscr{U} \mathscr{F}$. Since $(C\otimes X, (\phi_{\!_{D}} \otimes id_{\!_{X}})_{\!_{{D} \in \mathscr{D}}})$ is a colimit of $\mathcal{Q}_X$ $ \mathscr{U} \mathscr{F}$, we have $\nu_{\!_{X}} \mu_{\!_{X}} =   id_{\!_{C \otimes X}}$.\\

 Therefore, the arrow $\mu_{\!_{X}}$ is invertible and  $\mu_{\!_{X}}^{-1} =  \nu_{\!_{X}}$. It follows that $(C,\mu_{\!_{X}}) \in \mathcal{Z}_X(\Cc) $, and $\phi_{\!_{D}}$ is an arrow in $\mathcal{Z}_X(\Cc), \forall D \in \mathscr{D}$.  Hence, $((C,\mu_{\!_{X}}),(\phi_{\!_{D}})_{\!_{{D} \in \mathscr{D}}})$ is a cocone on $\mathscr{F}$. \\

It remains to show that $((C,\mu_{\!_{X}}),(\phi_{\!_{D}})_{\!_{{D} \in \mathscr{D}}})$ is a colimit of $\mathscr{F}$, let $((C',\eta),(\psi_{\!_{D}})_{\!_{{D} \in \mathscr{D}}})$ be a cocone on $\mathscr{F}$. Since $(C,(\phi_{\!_{D}})_{\!_{{D} \in \mathscr{D}}})$ is a colimit of $\mathscr{U} \mathscr{F}$, there exists a unique morphism $g: C \rightarrow C'$ in $\mathcal{C}$ with $g\phi_{\!_{D}} = \psi_{\!_{D}}$ for every $D \in \mathscr{D}$.\\

Clearly, all we need is to show that $g$ is a morphism in $\mathcal{Z}_X(\Cc)$. Indeed, we need to show that the diagram
 
\begin{equation} \label{diag.eq33}
\xymatrix{
C \otimes X \ar[d]_-{(g \otimes id_{\!_{X}})} \ar[rr]^{\mu_{\!_{X}}}_{\sim}
&& X \otimes C \ar[d]^-{ (id_{\!_{X}} \otimes g)} \\
C' \otimes X \ar[rr]_{\eta_{\!_{X}}}^{\sim} && X \otimes C' }
\end{equation}\\
commutes. \\

Consider the following diagram 
\begin{equation} \label{diag.eq34}
\xymatrix{
\mathscr{F}D \otimes X \ar[rrrr]^{\mathscr{F}f \otimes id_{\!_{X}}} \ar@/_/[dd]_{\alpha_{\!_{\mathscr{F}D}}}^{\sim} \ar@/_/[drr]_{\phi_{\!_{D}} \otimes id_{\!_{X}}} 
&&&& \mathscr{F}D' \otimes X  \ar@/^/[dll]^{ \phi_{\!_{D'}} \otimes id_{\!_{X}}} \ar@/^/[dd]^{\alpha_{\!_{\mathscr{F}D'}}}_{\sim} \\
& & C \otimes X \ar@/_/[ddl]_{\mu_{\!_{X}}} 
\ar@/^/[ddr]^{g \otimes id_{\!_{X}}} \\
X \otimes \mathscr{F}D \ar@/_2pc/[rrrr]^{id_{\!_{X}} \otimes \mathscr{F}f}
\ar@/_/[dr]_{id_{\!_{X}} \otimes \phi_{\!_{D}}} \ar@/_7pc/[ddrr]_(.4){id_{\!_{X}} \otimes \psi_{\!_{D}}}
&&&& X \otimes \mathscr{F}D' \ar@/^/[dl]^{id_{\!_{X}} \otimes \phi_{\!_{D'}}}\ar@/^7pc/[ddll]^(.4){ id_{\!_{X}} \otimes  \psi_{\!_{D'}}}
\\
&  X \otimes C \ar@/_/[dr]_{id_{\!_{X}} \otimes g}
& & C' \otimes X \ar@/^/[dl]^{\eta}
\\ & & X \otimes C' 
}
\end{equation}\\

Notably, $(X \otimes C', ((id_{\!_{X}} \otimes g) \mu_{\!_{X}} (\phi_{\!_{D}} \otimes id_{\!_{X}}))_{\!_{{D} \in \mathscr{D}}})$ is a cocone on $\mathcal{P}_X$ $ \mathscr{U} \mathscr{F}$ since 
 
 \begin{center}
 •$ (id_{\!_{X}} \otimes g) \mu_{\!_{X}} (\phi_{\!_{D'}} \otimes id_{\!_{X}})(\mathscr{F}f \otimes id_{\!_{X}}) =  (id_{\!_{X}} \otimes g) \mu_{\!_{X}} (\phi_{\!_{D'}} \mathscr{F}f \otimes id_{\!_{X}}) = (id_{\!_{X}} \otimes g) \mu_{\!_{X}} (\phi_{\!_{D}} \otimes id_{\!_{X}}) $
 \end{center}
 
\vspace{.2cm}

We also  have  \\

\begin{tabular}{lllll}
$(id_{\!_{X}} \otimes g) \mu_{\!_{X}} (\phi_{\!_{D}} \otimes id_{\!_{X}})$  &  $=(id_{\!_{X}} \otimes g) (id_{\!_{X}} \otimes \phi_{\!_{D}}) \alpha_{\!_{\mathscr{F}D}} $\\
&  (by \ref{diag.eq31})\\
 & $= (id_{\!_{X}} \otimes g \phi_{\!_{D}}) \alpha_{\!_{\mathscr{F}D}} $\\
&  (naturality of $\otimes$) \\
 &$= (id_{\!_{X}} \otimes \psi_{\!_{D}}) \alpha_{\!_{\mathscr{F}D}} $\\
&  (since $g\phi_{\!_{D}} = \psi_{\!_{D}}$, $\forall D \in \mathscr{D}$) \\
& $= \eta (\psi_{\!_{D}} \otimes id_{\!_{X}}) $\\
&  (since $\psi_{\!_{D}}$ is a morphism in $\mathcal{Z}_X(\Cc)$,  $\forall D \in \mathscr{D}$) \\
& $= \eta (g\phi_{\!_{D}} \otimes id_{\!_{X}}) $\\
&   (since $g\phi_{\!_{D}} = \psi_{\!_{D}}$, $\forall D \in \mathscr{D}$) \\
& $= \eta (g \otimes id_{\!_{X}}) (\phi_{\!_{D}} \otimes id_{\!_{X}}) $\\
&  (naturality of $\otimes$) \\
  \end{tabular}\\
  
  \vspace{.2cm}
  
for any $D$ in $\mathscr{D}$. \\

Since $(C \otimes X,(\phi_{\!_{D}} \otimes id_{\!_{X}})_{\!_{{D} \in \mathscr{D}}})$ is a colimit of $\mathcal{Q}_X$ $ \mathscr{U} \mathscr{F}$, we have   
 \begin{equation} \label{diag.eq35}
(id_{\!_{X}} \otimes g) \mu_{\!_{X}} = \eta (g \otimes id_{\!_{X}}).
\end{equation}\\
It follows $((C,\mu_{\!_{X}}),(\phi_{\!_{D}})_{\!_{{D} \in \mathscr{D}}})$ is a colimit of $\mathscr{F}$, and this ends the proof. 

\end{proof}

\begin{remark}\label{r.embeddings}
Let $(\Cc,\otimes,I)$ be monoidal category with $\mathcal{C}$  cocomplete, and let $\mathcal{P}_X$, $\mathcal{Q}_X$ be cocontinuous $\forall X \in \Cc$. Then by Proposition \ref{p.cocomp2},  $\mathcal{Z}_X(\Cc)$ is cocomplete and the forgetful functor $\mathscr{U}: \mathcal{Z}_X(\Cc) \rightarrow \mathcal{C}$ is cocontinuous for all $X \in \Cc$. \\
Let $\mathscr{D}$ be a small category, and let $\mathscr{F}:\mathscr{D} \rightarrow \mathcal{Z}(\Cc)$ be a functor. It follows  from Proposition \ref{p.cocomp2} that $((C,\mu_{\!_{X}}),(\phi_{\!_{D}})_{\!_{{D} \in \mathscr{D}}})$ is a colimit of $\mathscr{H}_{\!_{X}} \mathscr{F}$ (resp. $\mathscr{K}_{\!_{X}} \mathscr{F}$) for all $X \in \Cc$. It turns out that we get a family of invertible arrows $\{\mu_{\!_{X}}\}$,  where $\mu_{\!_{X}}$ is the map in diagram \ref{diag.eq31} for all $X \in \Cc$. 

\end{remark}

\begin{proposition}\label{p.cocomp3} 
Let $(\Cc,\otimes,I)$ be monoidal category with $\mathcal{C}$  cocomplete. If $P_X,\mathcal{Q}_X$ are cocontinuous $\forall X \in \Cc$, then $\mathcal{Z}(\Cc)$ is cocomplete and the forgetful functor $\mathscr{U}: \mathcal{Z}(\Cc) \rightarrow \mathcal{C}$ is cocontinuous. Further, the colimit of objects in $\mathcal{Z}(\Cc)$ can be obtained by the corresponding construction for objects in $\Cc$.
\end{proposition}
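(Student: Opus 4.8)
The plan is to reduce the statement to Proposition \ref{p.cocomp2} via the evaluation functors $\mathscr{H}_{\!_{X}} : \mathcal{Z}(\Cc) \to \mathcal{Z}_X(\Cc)$, $(A,\sigma) \mapsto (A,\sigma_{\!_{X}})$, of Remark \ref{r.Z.X.Z.h}(i), using throughout the observation recorded there: an object $(C,\sigma)$ lies in $\mathcal{Z}(\Cc)$ as soon as $(C,\sigma_{\!_{X}}) \in \mathcal{Z}_X(\Cc)$ for every $X \in \Cc$, the family $\sigma$ is natural in $X$, and the hexagon \ref{def.eq5} holds (condition \ref{def.eq4} being redundant by Remark \ref{rem.1}).

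First I would fix a small category $\mathscr{D}$ and a functor $\mathscr{F} : \mathscr{D} \to \mathcal{Z}(\Cc)$, writing $\mathscr{F}D = (\mathscr{F}D,\sigma_{\!_{\mathscr{F}D}})$ with $\sigma_{\!_{\mathscr{F}D}}$ a natural isomorphism obeying \ref{def.eq5}, and let $(C,(\phi_{\!_{D}})_{\!_{{D} \in \mathscr{D}}})$ be a colimit of $\mathscr{U}\mathscr{F}$ in $\Cc$. Applying Proposition \ref{p.cocomp2} to $\mathscr{H}_{\!_{X}}\mathscr{F}$ for each $X \in \Cc$ produces on this same object $C$ a family of isomorphisms $\mu_{\!_{X}} : C \otimes X \xrightarrow{\sim} X \otimes C$ --- the comparison arrow of diagram \ref{diag.eq31}, characterised by $\mu_{\!_{X}}(\phi_{\!_{D}} \otimes id_{\!_{X}}) = (id_{\!_{X}} \otimes \phi_{\!_{D}})(\sigma_{\!_{\mathscr{F}D}})_{\!_{X}}$ --- such that $(C,\mu_{\!_{X}}) \in \mathcal{Z}_X(\Cc)$ and each $\phi_{\!_{D}}$ is a morphism of $\mathcal{Z}_X(\Cc)$; this is essentially Remark \ref{r.embeddings}. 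I would then set $\sigma_{\!_{X}} := \mu_{\!_{X}}$.

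The core step, and the one I expect to be the only real work, is to verify that $\sigma = (\sigma_{\!_{X}})_{\!_{X \in \Cc}}$ is natural in $X$ and satisfies \ref{def.eq5}. For naturality, given $g : X \to X'$ in $\Cc$, I would prove $(g \otimes id_{\!_{C}})\sigma_{\!_{X}} = \sigma_{\!_{X'}}(id_{\!_{C}} \otimes g)$ by precomposing both sides with each $\phi_{\!_{D}} \otimes id_{\!_{X}}$: the characterisation of the $\mu$'s, bifunctoriality of $\otimes$, and the naturality of $\sigma_{\!_{\mathscr{F}D}}$ (valid since $\mathscr{F}D \in \mathcal{Z}(\Cc)$) collapse both composites to $(id_{\!_{X'}} \otimes \phi_{\!_{D}})(\sigma_{\!_{\mathscr{F}D}})_{\!_{X'}}(id_{\!_{\mathscr{F}D}} \otimes g)$, and the universal property of $(C \otimes X,(\phi_{\!_{D}} \otimes id_{\!_{X}})_{\!_{{D} \in \mathscr{D}}})$ as a colimit of $\mathcal{Q}_X\mathscr{U}\mathscr{F}$ forces equality. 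For the hexagon I would use that $\mathcal{Q}_{X \otimes Y}$ is cocontinuous (it is $-\otimes(X \otimes Y)$ with $X \otimes Y \in \Cc$), so $(C \otimes X \otimes Y,(\phi_{\!_{D}} \otimes id_{\!_{X \otimes Y}})_{\!_{{D} \in \mathscr{D}}})$ is a colimit of $\mathcal{Q}_{X \otimes Y}\mathscr{U}\mathscr{F}$; precomposing $\sigma_{\!_{X \otimes Y}}$ and $(id_{\!_{X}} \otimes \sigma_{\!_{Y}})(\sigma_{\!_{X}} \otimes id_{\!_{Y}})$ with $\phi_{\!_{D}} \otimes id_{\!_{X \otimes Y}}$ and repeatedly invoking the characterisation of the $\mu$'s and bifunctoriality reduces the identity to the hexagon \ref{def.eq5} for $\sigma_{\!_{\mathscr{F}D}}$, which holds since $\mathscr{F}D \in \mathcal{Z}(\Cc)$. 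Together with $(C,\sigma_{\!_{X}}) \in \mathcal{Z}_X(\Cc)$ for all $X$ this gives $(C,\sigma) \in \mathcal{Z}(\Cc)$, and since each $\phi_{\!_{D}}$ is a morphism of $\mathcal{Z}_X(\Cc)$ for every $X$, diagram \ref{diag.eq6} commutes, i.e.\ $\phi_{\!_{D}} : \mathscr{F}D \to (C,\sigma)$ is a morphism of $\mathcal{Z}(\Cc)$; hence $((C,\sigma),(\phi_{\!_{D}})_{\!_{{D} \in \mathscr{D}}})$ is a cocone on $\mathscr{F}$.

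Finally I would handle the universal property: given a cocone $((C',\eta),(\psi_{\!_{D}})_{\!_{{D} \in \mathscr{D}}})$ on $\mathscr{F}$ in $\mathcal{Z}(\Cc)$, restricting along $\mathscr{H}_{\!_{X}}$ and applying Proposition \ref{p.cocomp2} gives, for each $X$, a morphism $C \to C'$ of $\mathcal{Z}_X(\Cc)$ compatible with the $\phi_{\!_{D}}$ and $\psi_{\!_{D}}$; its underlying arrow in $\Cc$ is the unique $g : C \to C'$ with $g\phi_{\!_{D}} = \psi_{\!_{D}}$, hence is independent of $X$, so $g$ lies in $\mathcal{Z}_X(\Cc)$ for every $X$ and therefore in $\mathcal{Z}(\Cc)$, while its uniqueness in $\mathcal{Z}(\Cc)$ is inherited from its uniqueness in $\Cc$. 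This shows $((C,\sigma),(\phi_{\!_{D}})_{\!_{{D} \in \mathscr{D}}})$ is a colimit of $\mathscr{F}$, whence $\mathcal{Z}(\Cc)$ is cocomplete, $\mathscr{U}$ is cocontinuous, and colimits in $\mathcal{Z}(\Cc)$ are computed as in $\Cc$. Everything is thus formal given Proposition \ref{p.cocomp2}, except for the routine but slightly lengthy diagram chases establishing naturality of $\mu$ and the hexagon --- that is the part I would expect to be the main obstacle.
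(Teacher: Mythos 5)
Your proposal is correct and follows essentially the same route as the paper: reduce to Proposition \ref{p.cocomp2} via the evaluation functors to obtain the family $\{\mu_{\!_{X}}\}$, then verify naturality and the hexagon \ref{def.eq5} by precomposing with $\phi_{\!_{D}}\otimes id$ and invoking the colimit property of $\mathcal{Q}_X\,\mathscr{U}\mathscr{F}$ and $\mathcal{Q}_{X\otimes Y}\,\mathscr{U}\mathscr{F}$, exactly as in the paper's computations around diagrams \ref{diag.eq38} and \ref{diag.eq39}. Your explicit treatment of the final universal property (noting that the induced arrow $g$ is independent of $X$ and hence a morphism of $\mathcal{Z}(\Cc)$) is a step the paper leaves implicit, but it is the same argument, not a different one.
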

\begin{proof}
Let $\mathscr{D}$ be a small category, and let $\mathscr{F}:\mathscr{D} \rightarrow \mathcal{Z}(\Cc)$ be a functor. \\
Since $\mathcal{C}$ is a cocomplete category, the functor $\mathscr{U} \mathscr{F}$ has a colimit $(C,(\phi_{\!_{D}})_{\!_{{D} \in \mathscr{D}}})$. For every $X \in \Cc$, $(\mathcal{P}_X(C),(\mathcal{P}_X(\phi_{\!_{D}}))_{\!_{{D} \in \mathscr{D}}})$ is a colimit of the functor $\mathcal{P}_X$ $ \mathscr{U} \mathscr{F}$ because $\mathcal{P}_X$ is cocontinuous. \\
Thus, for every $X \in \Cc$, $(X \otimes C,(id_{\!_{X}} \otimes \phi_{\!_{D}})_{\!_{{D} \in \mathscr{D}}})$ is a colimit of $\mathcal{P}_X$ $ \mathscr{U} \mathscr{F}$. \\

Further, the functor $\mathscr{F}:\mathscr{D} \rightarrow \mathcal{Z}_X(\Cc)$ assigns to each object $D \in \mathscr{D}$ an object $(\mathscr{F}D,\alpha^{\!^{\mathscr{F}D}}) \in \mathcal{Z}(\Cc)$. Also, we have $\mathscr{F}f: (\mathscr{F}D,\alpha^{\!^{\mathscr{F}D}}) \rightarrow (\mathscr{F}D',\alpha^{\!^{\mathscr{F}D'}})$ is an arrow in $\mathcal{Z}(\Cc)$ for every arrow $f: D \rightarrow D'$ in $\mathscr{D}$. \\
By Remark  \ref{r.Z.X.Z.h}, we have $((C,\mu_{\!_{X}}),(\phi_{\!_{D}})_{\!_{{D} \in \mathscr{D}}})$ is a colimit of $\mathscr{H}_{\!_{X}} \mathscr{F}$, $\forall X \in \Cc$, and thus, we obtain a family of invertible arrows $\{\mu_{\!_{X}}\}_{\!_{X \in \Cc}}$ in $\Cc$, where $\mu_{\!_{X}}$ is the map in diagram \ref{diag.eq31} for all $X \in \Cc$.  Therefore, the proof is complete whence we show that $\{\mu_{\!_{X}}\}_{\!_{X \in \Cc}}$ are natural in $X$, for any $X \in \Cc$, and the conditions (\ref{def.eq4}) and (\ref{def.eq5}) hold.\\

To show that $\mu: C \otimes - \rightarrow - \otimes C$ is a natural transformation, let $\zeta : A \rightarrow B$ be an arrow in $\Cc$. We need to show that the following diagram is commutative

\begin{equation} \label{diag.eq36}
\xymatrix{
C \otimes A \ar[d]_-{(id_{\!_{C}} \otimes \zeta)} \ar[rr]^{\mu_{\!_{A}}}_{\sim}
&& A \otimes C \ar[d]^-{ (\zeta \otimes id_{\!_{C}})} \\
C \otimes B \ar[rr]_{\mu_{\!_{B}}}^{\sim} && B \otimes C }
\end{equation}\\

Since $(\mathscr{F}D,\alpha^{\!^{\mathscr{F}D}}) \in \mathcal{Z}(\Cc)$, $\forall D \in \mathscr{D} $, $\alpha^{\!^{\mathscr{F}D}}$ is a natural transformation, $\forall D \in \mathscr{D} $. Thus, for any $ D \in \mathscr{D} $, the diagram 

\begin{equation} \label{diag.eq37}
\xymatrix{
\mathscr{F}D \otimes A \ar[d]_-{(id_{\!_{\mathscr{F}D}} \otimes \zeta)} \ar[rr]^{\alpha^{\!^{\mathscr{F}D}}_{\!_{A}}}_{\sim}
&& A \otimes \mathscr{F}D \ar[d]^-{ (\zeta \otimes id_{\!_{\mathscr{F}D}})} \\
\mathscr{F}D \otimes B \ar[rr]_{\alpha^{\!^{\mathscr{F}D}}_{\!_{B}}}^{\sim} && B \otimes \mathscr{F}D }
\end{equation}\\
commutes. \\

Now, consider the following diagram

\begin{equation} \label{diag.eq38}
\xymatrix{
& \mathscr{F}D \otimes A \ar[rrrr]^{\mathscr{F}f \otimes id_{\!_{A}}}
\ar@/_2pc/[ddl]_(.5){id_{\!_{\mathscr{F}D}} \otimes \zeta} 
\ar@/_/[dd]_{\alpha^{\!^{\mathscr{F}D}}_{\!_{A}}}^{\sim} \ar@/_/[drr]_{\phi_{\!_{D}} \otimes id_{\!_{A}}} 
&&&& \mathscr{F}D' \otimes A  \ar@/^/[dll]^{ \phi_{\!_{D'}} \otimes id_{\!_{A}}} \ar@/^/[dd]^{\alpha^{\!^{\mathscr{F}D'}}_{\!_{A}}}_{\sim}
\ar@/^2pc/[ddr]^(.5){id_{\!_{\mathscr{F}D'}} \otimes \zeta} & \\
& & & C \otimes A \ar@/_/[ddl]_{\mu_{\!_{A}}} 
\ar@/^/[ddr]^{id_{\!_{C}} \otimes \zeta} &\\
\mathscr{F}D \otimes B \ar@/_2pc/[ddr]_(.4){\alpha^{\!^{\mathscr{F}D}}_{\!_{B}}}^{\sim}
& A \otimes \mathscr{F}D \ar@/_2pc/[rrrr]^{id_{\!_{A}} \otimes \mathscr{F}f}
\ar@/_/[dr]_{id_{\!_{A}} \otimes \phi_{\!_{D}}} \ar@/_1pc/[dd]_(.4){\zeta \otimes id_{\!_{\mathscr{F}D}}}
&&&& A \otimes \mathscr{F}D' \ar@/^/[dl]^{id_{\!_{A}} \otimes \phi_{\!_{D'}}}\ar@/^1pc/[dd]^(.4){ \zeta \otimes id_{\!_{\mathscr{F}D'}}}& \mathscr{F}D' \otimes B \ar@/^2pc/[ddl]^(.4){\alpha^{\!^{\mathscr{F}D'}}_{\!_{B}}}_{\sim}
\\
&&  A \otimes C \ar@/_/[dr]_(.3){\zeta \otimes id_{\!_{C}}}
& & C \otimes B \ar@/^/[dl]^(.4){\mu_{\!_{B}}} &
\\ 
 &B \otimes \mathscr{F}D \ar@/_1pc/[rr]_{id_{\!_{B}} \otimes \phi_{\!_{D}}}
 & & B \otimes C & & B \otimes \mathscr{F}D' \ar@/^1pc/[ll]^{id_{\!_{B}} \otimes \phi_{\!_{D}}}
}
\end{equation}\\

Clearly, $(B \otimes C, ((\zeta \otimes id_{\!_{C}}) \mu_{\!_{A}} (\phi_{\!_{D}} \otimes id_{\!_{A}}))_{\!_{{D} \in \mathscr{D}}})$ is a cocone on $\mathcal{Q}_A$ $ \mathscr{U} \mathscr{F}$ since 

\begin{center}
•$(\zeta \otimes id_{\!_{C}}) \mu_{\!_{A}} (\phi_{\!_{D'}} \otimes id_{\!_{A}})(\mathscr{F} \otimes id_{\!_{A}}) = (\zeta \otimes id_{\!_{C}}) \mu_{\!_{A}} (\phi_{\!_{D'}} \mathscr{F} \otimes id_{\!_{A}}) = (\zeta \otimes id_{\!_{C}}) \mu_{\!_{A}} (\phi_{\!_{D}} \otimes id_{\!_{A}})$ 
\end{center}

\vspace{.2cm}

Furthermore, we have \\

\begin{tabular}{lllll}
$(\zeta \otimes id_{\!_{C}}) \mu_{\!_{A}} (\phi_{\!_{D}} \otimes id_{\!_{A}})$  &  $ =(\zeta \otimes id_{\!_{C}}) (id_{\!_{A}} \otimes \phi_{\!_{D}}) \alpha^{\!^{\mathscr{F}D}}_{\!_{A}}$ \\
&  (by (\ref{diag.eq31}) taking $A$ and $\alpha^{\!^{\mathscr{F}D}}_{\!_{A}}$ in place of $X$ $\alpha_{\!_{\mathscr{F}D}}$ respectively)\\
 &  $ =(\zeta id_{\!_{A}} \otimes id_{\!_{C}} \phi_{\!_{D}}) \alpha^{\!^{\mathscr{F}D}}_{\!_{A}}$\\
&  (naturality of $\otimes$) \\
 &  $ =(\zeta  \otimes \phi_{\!_{D}}) \alpha^{\!^{\mathscr{F}D}}_{\!_{A}}$\\
 &$= (id_{\!_{B}} \otimes \phi_{\!_{D}}) (\zeta \otimes id_{\!_{\mathscr{F}D}}) \alpha^{\!^{\mathscr{F}D}}_{\!_{A}}$\\
 &  (naturality of $\otimes$) \\
 
 &$= (id_{\!_{B}} \otimes \phi_{\!_{D}}) \alpha^{\!^{\mathscr{F}D}}_{\!_{B}} (id_{\!_{\mathscr{F}D}} \otimes \zeta) $\\
 &  (since $\alpha^{\!^{\mathscr{F}D}}$ is a natural transformation, $\forall D \in \mathscr{D} $) \\
 
  & $= \mu_{\!_{B}} (\phi_{\!_{D}} \otimes id_{\!_{B}}) (id_{\!_{\mathscr{F}D}} \otimes \zeta) $\\
 &  (by (\ref{diag.eq31}) taking $B$ and $\alpha^{\!^{\mathscr{F}D}}_{\!_{B}}$ in place of $X$ $\alpha_{\!_{\mathscr{F}D}}$ respectively)\\
&  $= \mu_{\!_{B}} (\phi_{\!_{D}} id_{\!_{\mathscr{F}D}} \otimes id_{\!_{B}}\zeta) $\\
&   (naturality of $\otimes$) \\
& $= \mu_{\!_{B}} (\phi_{\!_{D}} \otimes \zeta) $\\
& $= \mu_{\!_{B}} (id_{\!_{C}} \otimes \zeta) (\phi_{\!_{D}} \otimes id_{\!_{A}}) $\\
&  (naturality of $\otimes$) \\
  \end{tabular}\\

\vspace{.2cm}

Since  $(C \otimes A,( \phi_{\!_{D}} \otimes id_{\!_{A}} )_{\!_{{D} \in \mathscr{D}}})$ is a colimit of $\mathcal{Q}_A$ $ \mathscr{U} \mathscr{F}$, it follows that  $(\zeta \otimes id_{\!_{C}}) \mu_{\!_{A}} = \mu_{\!_{B}} (id_{\!_{C}} \otimes \zeta)$. Hence, $\mu: C \otimes - \longrightarrow - \otimes C$ is a natural transformation. \\
By Remark \ref{r.Z.X.Z.h}, it remains to show that the condition  (\ref{def.eq5}) holds. Consider the following diagram

\begin{equation} \label{diag.eq39}
\xymatrix{
\mathscr{F}D \otimes X \otimes Y \ar[rrrr]^{\mathscr{F}f \otimes id_{\!_{X \otimes Y}}} \ar@/_/[dd]_{\alpha^{\!^{\mathscr{F}D}}_{\!_{X \otimes Y}}}^{\sim} \ar@/_/[drr]_{\phi_{\!_{D}} \otimes id_{\!_{X \otimes Y}}} 
&&&& \mathscr{F}D' \otimes X \otimes Y  \ar@/^/[dll]^{ \phi_{\!_{D'}} \otimes id_{\!_{X \otimes Y}}} \ar@/^/[dd]^{\alpha^{\!^{\mathscr{F}D'}}_{\!_{X \otimes Y}}}_{\sim} \\
& & C \otimes X \otimes Y \ar@/_2pc/[dddd]_(.48){\mu_{\!_{X \otimes Y}}} 
\ar@/^/[ddr]^(.6){\mu_{\!_{X }} \otimes id_{\!_{Y}}} \\
X \otimes Y \otimes \mathscr{F}D 
\ar@/_5pc/[dddrr]_(.3){id_{\!_{X \otimes Y}} \otimes \phi_{\!_{D}}}
&&&& X \otimes Y \otimes \mathscr{F}D' \ar@/^5pc/[dddll]^(.3){ id_{\!_{X \otimes Y}} \otimes  \phi_{\!_{D'}}}
\\
&  & & X \otimes C \otimes Y \ar@/^/[ddl]^(.4){id_{\!_{X }} \otimes \mu_{\!_{Y}}}
\\ 
&&&&&\\
& & X \otimes Y \otimes C 
}
\end{equation}\\

\vspace{.2cm}

\begin{tabular}{lllll}
$(id_{\!_{X }} \otimes \mu_{\!_{Y}}) (\mu_{\!_{X }} \otimes id_{\!_{Y}}) (\phi_{\!_{D}} \otimes id_{\!_{X \otimes Y}})$  &  $ =(id_{\!_{X }} \otimes \mu_{\!_{Y}}) (\mu_{\!_{X }} \otimes id_{\!_{Y}}) (\phi_{\!_{D}} \otimes id_{\!_{X}}\otimes id_{\!_{Y}})$ \\
&  (since  $\otimes$ is a bifunctor)\\
 &  $ =(id_{\!_{X }} \otimes \mu_{\!_{Y}}) (\mu_{\!_{X }} (\phi_{\!_{D}} \otimes id_{\!_{X}}) \otimes id_{\!_{Y}} id_{\!_{Y}}) $ \\
&  (naturality of $\otimes$) \\
 &  $ =(id_{\!_{X }} \otimes \mu_{\!_{Y}}) ((id_{\!_{X}} \otimes \phi_{\!_{D}}) \alpha^{\!^{\mathscr{F}D}}_{\!_{X}} \otimes id_{\!_{Y}}) $ \\
&  (by (\ref{diag.eq31}) taking $\alpha^{\!^{\mathscr{F}D}}_{\!_{X}}$ in place of $\alpha_{\!_{\mathscr{F}D}}$) \\
 & $ =(id_{\!_{X }} \otimes \mu_{\!_{Y}}) (id_{\!_{X}} \otimes \phi_{\!_{D}} \otimes id_{\!_{Y}}) (\alpha^{\!^{\mathscr{F}D}}_{\!_{X}} \otimes id_{\!_{Y}}) $ \\
 &  (naturality of $\otimes$) \\
 
 & $ =(id_{\!_{X }} id_{\!_{X }} \otimes \mu_{\!_{Y}}(\phi_{\!_{D}} \otimes id_{\!_{Y}})) (\alpha^{\!^{\mathscr{F}D}}_{\!_{X}} \otimes id_{\!_{Y}}) $ \\
 &  (naturality of $\otimes$) \\
 
  & $ =(id_{\!_{X }} \otimes (id_{\!_{Y}} \otimes \phi_{\!_{D}}) \alpha^{\!^{\mathscr{F}D}}_{\!_{Y}} )(\alpha^{\!^{\mathscr{F}D}}_{\!_{X}} \otimes id_{\!_{Y}}) $ \\
 &  (by taking $Y,\alpha^{\!^{\mathscr{F}D}}_{\!_{Y}}$ in (\ref{diag.eq31}) in place of $X,\alpha_{\!_{\mathscr{F}D}}$ respectively)\\
&$  =(id_{\!_{X }} \otimes id_{\!_{Y}} \otimes \phi_{\!_{D}}) (id_{\!_{X }} \otimes  \alpha^{\!^{\mathscr{F}D}}_{\!_{Y}}) (\alpha^{\!^{\mathscr{F}D}}_{\!_{X}} \otimes id_{\!_{Y}}) $ \\
&   (naturality of $\otimes$) \\
& $ =(id_{\!_{X }} \otimes id_{\!_{Y}} \otimes \phi_{\!_{D}})  \alpha^{\!^{\mathscr{F}D}}_{\!_{X \otimes Y}} $ \\
& (since $(\mathscr{F}D,\alpha^{\!^{\mathscr{F}D}}) \in \mathcal{Z}(\Cc)$)\\
& $ =(id_{\!_{X \otimes Y}} \otimes \phi_{\!_{D}}) \alpha^{\!^{\mathscr{F}D}}_{\!_{X \otimes Y}} $ \\
& (naturality of $\otimes$) \\
  \end{tabular}\\

\vspace{.2cm}

Since  $(C \otimes X \otimes Y,( \phi_{\!_{D}} \otimes id_{\!_{ X \otimes Y}} )_{\!_{{D} \in \mathscr{D}}})$ is a colimit of $\mathcal{Q}_{ X \otimes Y}$ $ \mathscr{U} \mathscr{F}$, it follows that the condition (\ref{def.eq5}) is satisfied. Therefore, $(C,\mu)$ is a colimit of $\mathscr{F}$ and the proof is complete. 

\end{proof}

The following is an immediate consequence of the proof of Proposition \ref{p.cocomp3} and Remark \ref{r.Z.X.Z.h}.
\begin{corollary}\label{c.cocomp4} 
Let $\mathcal{C}$ be a cocomplete category. If $\mathcal{P}_X,\mathcal{Q}_X$ are cocontinuous $\forall X \in \Cc$, then $\mathcal{Z}_{\omega}(\Cc)$ is cocomplete and the forgetful functor $\mathscr{U}: \mathcal{Z}_{\omega}(\Cc) \rightarrow \mathcal{C}$ is cocontinuous. Moreover, the colimit of objects in $\mathcal{Z}_{\omega}(\Cc)$ can be obtained by the corresponding construction for objects in $\Cc$.\\
\end{corollary}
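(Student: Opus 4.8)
The plan is to transcribe the proof of Proposition \ref{p.cocomp3}, deleting the steps that invoked invertibility of the braiding; since an object of $\mathcal{Z}_{\omega}(\Cc)$ is a pair $(X,\sigma)$ with $\sigma_{\!_{Y}}\colon X\otimes Y\to Y\otimes X$ merely natural in $Y$ (not an isomorphism), no such step is available, but none is needed either. Concretely, let $\mathscr{D}$ be small and $\mathscr{F}\colon\mathscr{D}\to\mathcal{Z}_{\omega}(\Cc)$ a functor, writing $\mathscr{F}D=(\mathscr{F}D,\sigma^{\mathscr{F}D})$. By cocompleteness of $\Cc$ the functor $\mathscr{U}\mathscr{F}$ has a colimit $(C,(\phi_{\!_{D}})_{\!_{D\in\mathscr{D}}})$, and by cocontinuity of $\mathcal{P}_X$ and $\mathcal{Q}_X$ the cocones $(X\otimes C,(id_{\!_{X}}\otimes\phi_{\!_{D}}))$ and $(C\otimes X,(\phi_{\!_{D}}\otimes id_{\!_{X}}))$ are colimits of $\mathcal{P}_X\,\mathscr{U}\mathscr{F}$ and $\mathcal{Q}_X\,\mathscr{U}\mathscr{F}$ respectively, for every $X\in\Cc$.

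Exactly as in diagram \ref{diag.eq31}, the family $\big((id_{\!_{X}}\otimes\phi_{\!_{D}})\,\sigma^{\mathscr{F}D}_{\!_{X}}\big)_{\!_{D\in\mathscr{D}}}$ is a cocone on $\mathcal{P}_X\,\mathscr{U}\mathscr{F}$ (naturality of $\otimes$, plus the cocone property of $(\phi_{\!_{D}})$), so there is a unique $\mu_{\!_{X}}\colon C\otimes X\to X\otimes C$ with $\mu_{\!_{X}}(\phi_{\!_{D}}\otimes id_{\!_{X}})=(id_{\!_{X}}\otimes\phi_{\!_{D}})\,\sigma^{\mathscr{F}D}_{\!_{X}}$; note I never form the ``reverse'' cocone built from $(\sigma^{\mathscr{F}D}_{\!_{X}})^{-1}$ that occurred in Propositions \ref{p.cocomp1}--\ref{p.cocomp3}, and correspondingly I make no claim that $\mu_{\!_{X}}$ is invertible. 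I then check, in order: (i) $\mu$ is natural in $X$, by repeating the cocone comparison of diagram \ref{diag.eq38} with the naturality square of $\sigma^{\mathscr{F}D}$ in place of that of $\alpha^{\mathscr{F}D}$; (ii) the normalization $\mu_{\!_{I}}=id_{\!_{C}}$ (modulo the unit isomorphisms), since $\mu_{\!_{I}}(\phi_{\!_{D}}\otimes id_{\!_{I}})=(id_{\!_{I}}\otimes\phi_{\!_{D}})\,\sigma^{\mathscr{F}D}_{\!_{I}}=(id_{\!_{I}}\otimes\phi_{\!_{D}})$ by the normalization axiom of Definition \ref{def.1}, and $id_{\!_{C\otimes I}}$ satisfies the same equation, so uniqueness into the colimit $C\otimes I$ forces the claim; (iii) the hexagon identity $\mu_{\!_{X\otimes Y}}=(id_{\!_{X}}\otimes\mu_{\!_{Y}})(\mu_{\!_{X}}\otimes id_{\!_{Y}})$, by the cocone comparison on $\mathcal{Q}_{X\otimes Y}\,\mathscr{U}\mathscr{F}$ of diagram \ref{diag.eq39}, now using ``$(\mathscr{F}D,\sigma^{\mathscr{F}D})\in\mathcal{Z}_{\omega}(\Cc)$'' where that proof used ``$(\mathscr{F}D,\alpha^{\mathscr{F}D})\in\mathcal{Z}(\Cc)$''. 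Hence $(C,\mu)\in\mathcal{Z}_{\omega}(\Cc)$.

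By construction each $\phi_{\!_{D}}$ makes diagram \ref{diag.eq1} commute, so $((C,\mu),(\phi_{\!_{D}}))$ is a cocone on $\mathscr{F}$; and for any cocone $((C',\eta),(\psi_{\!_{D}}))$ on $\mathscr{F}$ the unique $g\colon C\to C'$ in $\Cc$ with $g\phi_{\!_{D}}=\psi_{\!_{D}}$ lies in $\mathcal{Z}_{\omega}(\Cc)$, by the cocone argument of diagram \ref{diag.eq34}, using that each $\psi_{\!_{D}}$ is a morphism in $\mathcal{Z}_{\omega}(\Cc)$. This gives that $((C,\mu),(\phi_{\!_{D}}))$ is a colimit of $\mathscr{F}$ whose underlying object and cocone are the colimit in $\Cc$; in particular $\mathcal{Z}_{\omega}(\Cc)$ is cocomplete, $\mathscr{U}$ is cocontinuous, and colimits are computed as in $\Cc$. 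I do not expect a genuine obstacle here: the content of Proposition \ref{p.cocomp3} carries over unchanged once the invertibility steps are excised, and the only point that needs a moment's attention is that the normalization axiom of $\mathcal{Z}_{\omega}(\Cc)$ --- which, in contrast to the center (Remark \ref{rem.1}), is not automatic --- also descends to the colimit, as in step (ii).
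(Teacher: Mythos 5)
Your proof is correct and follows the route the paper intends: the paper leaves this proof empty, asserting it is an immediate consequence of the proof of Proposition \ref{p.cocomp3}, and your argument is precisely that proof rerun for $\mathcal{Z}_{\omega}(\Cc)$. You correctly identify the only two points where the transcription is not purely mechanical --- the half-braidings are no longer invertible, so the construction of $\nu_{\!_{X}}=\mu_{\!_{X}}^{-1}$ is dropped (and is not needed, since objects of $\mathcal{Z}_{\omega}(\Cc)$ carry mere natural transformations), and the normalization $\sigma_{\!_{X,I}}=id_{\!_{X}}$ must be verified separately because Remark \ref{rem.1} does not apply to the weak center --- and you handle both correctly.
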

\begin{proof}

\end{proof}

\section{\textbf{Co-wellpoweredness}}\label{s.cowell}

Let $\mathfrak{E}$ be a class of all epimorphisms of a category $\mathfrak{A}$. Then $\mathfrak{A}$ is called \textit{co-wellpowered} provided that no $\mathfrak{A}$-object has a proper class of pairwise non-isomorphic quotients \cite[p. 125]{Adamek}.  In other words, for every object the quotients form a set \cite[p. 92, 95]{Schubert}. We refer the reader to \cite{Adamek} basics on quotients and co-wellpowered categories.

\begin{proposition}\label{p.cowell.comon} \cite[p. 5]{Abdulwahid} 
Let $CoMon(\Cc)$ be the category of comonoids of $\Cc$ and $U:CoMon(\Cc)\rightarrow \Cc$ be the forgetful functor. If $\Cc$ is co-wellpowered, then so is $CoMon(\Cc)$. 
\end{proposition}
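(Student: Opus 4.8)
## Proof Proposal for Proposition~\ref{p.cowell.comon}

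The plan is to show that every quotient object of a comonoid $(C,\Delta,\varepsilon)$ in $CoMon(\Cc)$, taken with respect to epimorphisms in $CoMon(\Cc)$, is controlled by a quotient of $C$ in $\Cc$, and then to invoke co-wellpoweredness of $\Cc$ to conclude that these form a set. The key observation is that the forgetful functor $U: CoMon(\Cc) \rightarrow \Cc$ preserves epimorphisms. Indeed, $U$ has a right adjoint in many cases, but more elementarily: if $f: (C,\Delta_C,\varepsilon_C) \rightarrow (D,\Delta_D,\varepsilon_D)$ is an epimorphism in $CoMon(\Cc)$, I first want to argue that $U(f) = f : C \rightarrow D$ is an epimorphism in $\Cc$. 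This is the natural place to use that $U$ is cocontinuous (Proposition~\ref{p.cocomp-cowell}) together with the fact that an arrow is an epimorphism precisely when its pushout with itself along the codomain gives two equal coprojections, or equivalently when the cokernel pair is trivial; since $U$ preserves pushouts, it preserves this characterization, hence preserves epimorphisms.

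Granting that $U$ preserves epimorphisms, I would proceed as follows. Fix a comonoid $A = (C,\Delta,\varepsilon)$ in $CoMon(\Cc)$. Suppose $\{ e_i : A \twoheadrightarrow A_i \}_{i \in \mathcal{I}}$ is a family of pairwise non-isomorphic quotients of $A$ in $CoMon(\Cc)$, where $A_i = (C_i, \Delta_i, \varepsilon_i)$. Applying $U$, each $U(e_i) : C \twoheadrightarrow C_i$ is an epimorphism in $\Cc$, so we obtain a family of quotients of $C$ in $\Cc$. Since $\Cc$ is co-wellpowered, after identifying isomorphic quotients in $\Cc$, this family meets only a set of isomorphism classes. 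The crux is then to show that the passage $A_i \mapsto U(A_i)$ does not collapse too much: if $U(e_i)$ and $U(e_j)$ represent the same quotient of $C$ in $\Cc$, then $A_i$ and $A_j$ represent the same quotient of $A$ in $CoMon(\Cc)$.

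The main obstacle, and the heart of the argument, is precisely this last point: reconstructing the comonoid structure on a quotient from the underlying $\Cc$-quotient. If $U(e_i)$ and $U(e_j)$ are isomorphic as quotients of $C$, there is an isomorphism $\theta : C_i \xrightarrow{\sim} C_j$ in $\Cc$ with $\theta \circ U(e_i) = U(e_j)$. I must check that $\theta$ is automatically a morphism of comonoids, i.e.\ that it intertwines $\Delta_i$ with $\Delta_j$ and $\varepsilon_i$ with $\varepsilon_j$. For the counit: $\varepsilon_j \circ \theta \circ e_i = \varepsilon_j \circ e_j = \varepsilon$ (as $e_j$ is a comonoid map) $= \varepsilon_i \circ e_i$, and since $e_i$ is epi in $\Cc$ this forces $\varepsilon_j \circ \theta = \varepsilon_i$. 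For the comultiplication one argues similarly, using that $e_i \otimes e_i$ is epi (this is where cocontinuity of $\otimes$ in each variable, i.e.\ the hypothesis that $\mathcal{P}_X, \mathcal{Q}_X$ preserve epimorphisms, would be invoked if needed, though for comonoids one may instead use the explicit construction of colimits/quotients from Proposition~\ref{p.cocomp-cowell}): from $\Delta_j \circ e_j = (e_j \otimes e_j) \circ \Delta$ and $\Delta_i \circ e_i = (e_i \otimes e_i) \circ \Delta$, together with $\theta \circ e_i = e_j$, one deduces $\Delta_j \circ \theta \circ e_i = (\theta \otimes \theta) \circ \Delta_i \circ e_i$, and cancelling the epimorphism $e_i$ gives $\Delta_j \circ \theta = (\theta \otimes \theta) \circ \Delta_i$. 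Thus $\theta$ is a comonoid isomorphism, so $A_i \cong A_j$ as quotients of $A$ in $CoMon(\Cc)$, contradicting the assumption that the family was pairwise non-isomorphic unless $\mathcal{I}$ is a set. Hence $CoMon(\Cc)$ is co-wellpowered.

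One subtlety I should flag: the argument above uses that a comonoid epimorphism $e : A \twoheadrightarrow A_i$ has underlying map $U(e)$ that is epi in $\Cc$, but for the cancellation steps I actually only need that $U(e_i)$ is epi in $\Cc$ when $A_i$ is realized as an \emph{extremal} or \emph{regular} quotient; the cleanest route is to note that quotients in $CoMon(\Cc)$ relevant to co-wellpoweredness can be taken to be represented by epimorphisms whose underlying arrows are epi, using the explicit colimit construction of Proposition~\ref{p.cocomp-cowell}, so that the cokernel-pair characterization of epimorphisms transports along $U$. If $\Cc$ is not assumed cocomplete, one instead argues directly that any epimorphism in $CoMon(\Cc)$ has epi underlying arrow by using that $U$ reflects nothing but that a map out of a comonoid that is not epi in $\Cc$ admits, after tensoring, a distinguishing pair lifted to comonoids — this is where I expect to spend the most care.
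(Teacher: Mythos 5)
The paper itself gives no proof of this proposition (it is quoted from \cite{Abdulwahid}), but its own proofs of the analogous co-wellpoweredness statements for $\mathcal{Z}_h(\Cc)$ and $\mathcal{Z}_X(\Cc)$ (Propositions \ref{p.cowell5} and \ref{p.cowell6}) follow exactly your template: reduce to showing that two structure-preserving arrows out of a fixed object which are equivalent as epimorphisms in $\Cc$ are already equivalent in the structured category, by checking that the comparison isomorphism $\theta$ respects the structure after cancelling an epimorphism of $\Cc$. Your verification of that step is correct, and in fact simpler than you suggest: the relation $\Delta_j\circ\theta\circ e_i=(\theta\otimes\theta)\circ\Delta_i\circ e_i$ is cancelled on the right by the single arrow $e_i$, so you never need $e_i\otimes e_i$ to be an epimorphism, hence no hypothesis on $\mathcal{P}_X,\mathcal{Q}_X$ is required there.

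The genuine gap is in your first step, the claim that $U$ preserves epimorphisms; without it the passage from quotients of $A$ in $CoMon(\Cc)$ to quotients of $UA$ in $\Cc$ is not even defined, so the whole reduction collapses. Your justification via cocontinuity of $U$ and the cokernel-pair characterization of epimorphisms is valid, but it rests on Proposition \ref{p.cocomp-cowell}, whose hypothesis is that $\Cc$ is \emph{cocomplete} --- an assumption absent from the present proposition, which assumes only that $\Cc$ is co-wellpowered. The ``elementary'' fallback you allude to in your last paragraph (transposing a distinguishing pair $u,v:D\rightarrow X$ with $uf=vf$ to comonoid morphisms into a cofree comonoid on $X$) requires a right adjoint to $U$, which is precisely what the paper is ultimately trying to construct, so it cannot be invoked here without circularity; as written, that paragraph is a placeholder rather than an argument. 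In summary, your proof establishes the proposition under the additional hypothesis that $\Cc$ is cocomplete. This is harmless for every application in the paper (Theorems \ref{t.cofree8}--\ref{t.cofree10} all assume cocompleteness alongside co-wellpoweredness), but it does not prove the statement exactly as it stands, and you should either add the cocompleteness hypothesis or supply an independent proof that epimorphisms of comonoids have epimorphic underlying arrows.
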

\begin{proof}

\end{proof}

\begin{proposition}\label{p.cowell5} 
Let  $\mathcal{C}$  be a co-wellpowered category, and let $h: A \rightarrow B$ be an arrow in $\Cc$. If $\mathcal{P}_J$ is cocontinuous $\forall J \in \{ A,B\}$, then $\mathcal{Z}_h(\Cc)$ is co-wellpowered. 
\end{proposition}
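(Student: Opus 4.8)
The plan is to show that every object $(X,\alpha,\beta)$ of $\mathcal{Z}_h(\Cc)$ has only a set of isomorphism classes of quotients by exploiting the cocontinuity of $\mathcal{P}_A$ and $\mathcal{P}_B$ together with the co-wellpoweredness of $\Cc$. First I would recall from Proposition \ref{p.cocomp1} that, since $\mathcal{P}_J,\mathcal{Q}_J$ are cocontinuous for $J\in\{A,B\}$ and $\Cc$ is cocomplete, colimits in $\mathcal{Z}_h(\Cc)$ are computed as in $\Cc$; in particular the forgetful functor $\mathscr{U}:\mathcal{Z}_h(\Cc)\to\Cc$ is cocontinuous, hence preserves epimorphisms (being a left adjoint-type construction, it preserves coequalizers and pushouts, so regular/strong epis, and in fact one checks directly that it preserves all epimorphisms since a map in $\mathcal{Z}_h(\Cc)$ that is epi downstairs is epi upstairs by faithfulness, and the cocontinuity gives the converse). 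The key point I want is: if $q:(X,\alpha,\beta)\twoheadrightarrow (Y,\alpha',\beta')$ is an epimorphism in $\mathcal{Z}_h(\Cc)$, then $\mathscr{U}q:X\to Y$ is an epimorphism in $\Cc$.

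Granting that, the main step is the following rigidity observation: the structure maps $\alpha',\beta'$ on the quotient $Y$ are \emph{uniquely determined} by $q$ and by $\alpha,\beta$. Indeed, from the defining commutative squares \ref{diag.eq14} for $q$ to be a morphism, we get $\alpha'\,(id_{\!_{A}}\otimes \mathscr{U}q) = (\mathscr{U}q\otimes id_{\!_{A}})\,\alpha$ and $\beta'\,(id_{\!_{B}}\otimes \mathscr{U}q) = (\mathscr{U}q\otimes id_{\!_{B}})\,\beta$. Since $\mathcal{P}_A$ is cocontinuous it preserves epimorphisms (an epi is the coequalizer of its kernel pair, or: cocontinuous functors preserve the colimit presentation exhibiting epimorphisms), so $id_{\!_{A}}\otimes \mathscr{U}q = \mathcal{P}_A(\mathscr{U}q)$ is an epimorphism in $\Cc$; likewise $id_{\!_{B}}\otimes \mathscr{U}q$ is epi. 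Therefore $\alpha'$ is uniquely determined by the equation above, and similarly $\beta'$. Consequently the assignment sending a quotient $q:(X,\alpha,\beta)\to(Y,\alpha',\beta')$ in $\mathcal{Z}_h(\Cc)$ to the quotient $\mathscr{U}q:X\to Y$ in $\Cc$ is injective on isomorphism classes: two quotients of $(X,\alpha,\beta)$ whose underlying $\Cc$-quotients of $X$ agree must have the same transported structure maps, hence are isomorphic in $\mathcal{Z}_h(\Cc)$.

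Finally I would conclude: since $\Cc$ is co-wellpowered, $X=\mathscr{U}(X,\alpha,\beta)$ has only a set of isomorphism classes of quotients in $\Cc$; by the injection just established, $(X,\alpha,\beta)$ has only a set of isomorphism classes of quotients in $\mathcal{Z}_h(\Cc)$. As $(X,\alpha,\beta)$ was arbitrary, $\mathcal{Z}_h(\Cc)$ is co-wellpowered.

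The step I expect to be the main obstacle is justifying carefully that epimorphisms in $\mathcal{Z}_h(\Cc)$ map to epimorphisms in $\Cc$ and that $\mathcal{P}_A,\mathcal{P}_B$ preserve epimorphisms: cocontinuity literally gives preservation of colimits, and one must either phrase epimorphisms colimit-theoretically (epi $=$ the pushout of $q$ along itself is $(id,id)$, equivalently the codiagonal-type pushout condition) so that cocontinuity applies, or argue via coequalizers/kernel pairs if $\Cc$ has enough structure; in the fully general monoidal setting one should present the argument through the colimit characterization of epimorphisms so that Proposition \ref{p.cocomp1} is all that is needed. Once that is in place, the uniqueness of the transported braiding data and the reduction to co-wellpoweredness of $\Cc$ are routine.
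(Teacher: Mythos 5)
Your core argument coincides with the paper's. Given two quotients $p:(X,\alpha,\beta)\to(Y,\alpha',\beta')$ and $q:(X,\alpha,\beta)\to(Z,\alpha'',\beta'')$ identified in $\Cc$ by an isomorphism $\theta$ with $\theta p=q$, the paper chases the squares \ref{diag.eq14} to obtain $\alpha''(id_{\!_{A}}\otimes\theta)(id_{\!_{A}}\otimes p)=(\theta\otimes id_{\!_{A}})\alpha'(id_{\!_{A}}\otimes p)$ and cancels $id_{\!_{A}}\otimes p=\mathcal{P}_A(p)$, which is epic because $\mathcal{P}_A$ is cocontinuous and cocontinuous functors preserve epimorphisms; your ``rigidity of the transported structure maps'' is exactly this computation, and the conclusion via injectivity of the passage to underlying $\Cc$-quotients is the paper's reduction.

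Two caveats. First, your justification of the preliminary step --- that an epimorphism of $\mathcal{Z}_h(\Cc)$ has an epic underlying map --- appeals to Proposition \ref{p.cocomp1} and therefore to hypotheses ($\Cc$ cocomplete, $\mathcal{Q}_J$ cocontinuous for $J\in\{A,B\}$) that are \emph{not} among the assumptions of this proposition, so you cannot import them as stated. The paper does not address this step at all: it only proves that quotients of $(X,\alpha,\beta)$ which are ``equivalent as epimorphisms in $\Cc$'' are equivalent in $\mathcal{Z}_h(\Cc)$, implicitly restricting to morphisms whose underlying maps are already epic. You have correctly located the missing link, but to close it honestly you must either add the extra hypotheses or give an argument for preservation of epimorphisms by $\mathscr{U}$ that uses only what is assumed; faithfulness of $\mathscr{U}$ gives the wrong direction. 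Second, the parenthetical ``an epi is the coequalizer of its kernel pair'' is false in general (that characterizes regular epimorphisms); the correct colimit-theoretic characterization, which you also state, is that $f$ is epic iff the pushout of $f$ along itself consists of identities, and that is what makes cocontinuity of $\mathcal{P}_A$ and $\mathcal{P}_B$ suffice for the cancellation step.
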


\begin{proof}
It is enough to show that if $p:(X,\alpha,\beta) \rightarrow (Y,\alpha',\beta')$ and $q:(X,\alpha,\beta) \rightarrow (Z,\alpha'',\beta'')$ are in $\mathcal{Z}_h(\Cc)$ and equivalent as epimorphisms in $\Cc$, then they are equivalent (as epimorphisms) in $\mathcal{Z}_h(\Cc)$. Let $\theta:Y \rightarrow Z$ be an isomorphism in $\Cc$ for which $\theta p = q$. We show that $\theta$ is in fact an isomorphism in $\mathcal{Z}_h(\Cc)$.

Since $p$ and $q$ are arrows in $\mathcal{Z}_h(\Cc)$, the following diagrams are commutative.
\begin{equation} \label{diag.eq40}
\xymatrix{
A \otimes X \ar[d]_-{(id_{\!_{A}} \otimes p)} \ar[rr]^{\alpha}_{\sim}
&& X \otimes A \ar[d]^-{ (p \otimes id_{\!_{A}})} \\
A \otimes Y \ar[rr]_{\alpha'}^{\sim} && Y \otimes A }
\hspace{35pt} 
\xymatrix{
B \otimes X \ar[d]_-{(id_{\!_{B}} \otimes p)} \ar[rr]^{\beta}_{\sim}
&& X \otimes B \ar[d]^-{ (p \otimes id_{\!_{B}})} \\
B \otimes Y \ar[rr]_{\beta'}^{\sim} && Y \otimes B }
\end{equation}

\begin{equation} \label{diag.eq41}
\xymatrix{
A \otimes X  \ar[d]_-{(id_{\!_{A}} \otimes q)} \ar[rr]^{\alpha}_{\sim}
&& X \otimes A \ar[d]^-{ (q \otimes id_{\!_{A}})} \\
A \otimes Z \ar[rr]_{\alpha''}^{\sim} && Z \otimes A }
\hspace{35pt} 
\xymatrix{
B \otimes X \ar[d]_-{(id_{\!_{B}} \otimes q)} \ar[rr]^{\beta}_{\sim}
&& X \otimes B \ar[d]^-{ (q \otimes id_{\!_{B}})} \\
B \otimes Z \ar[rr]_{\beta''}^{\sim} && Z \otimes B }
\end{equation}

Consider the following diagrams:

\begin{equation} \label{diag.eq42}
\xymatrix{
A \otimes X \ar[d]_-{(id_{\!_{A}} \otimes p)}  \ar@/_5pc/[dd]_{(id_{\!_{A}} \otimes q)} \ar[rr]^{\alpha}_{\sim} && X \otimes A \ar@/^5pc/[dd]^{(q \otimes id_{\!_{A}})} \ar[d]^-{ (p \otimes id_{\!_{A}})} \\
A \otimes Y \ar[d]_-{(id_{\!_{A}} \otimes \theta)} \ar[rr]_{\alpha'}^{\sim} && Y \otimes A \ar[d]^-{ (\theta \otimes id_{\!_{A}})}\\
A \otimes Z \ar[rr]_{\alpha''}^{\sim} && Z \otimes A }
\end{equation}

\begin{equation} \label{diag.eq43}
\xymatrix{
B \otimes X \ar[d]_-{(id_{\!_{B}} \otimes p)}  \ar@/_5pc/[dd]_{(id_{\!_{B}} \otimes q)} \ar[rr]^{\alpha}_{\sim} && X \otimes B \ar@/^5pc/[dd]^{(q \otimes id_{\!_{B}})} \ar[d]^-{ (p \otimes id_{\!_{B}})} \\
B \otimes Y \ar[d]_-{(id_{\!_{B}} \otimes \theta)} \ar[rr]_{\alpha'}^{\sim} && Y \otimes B \ar[d]^-{ (\theta \otimes id_{\!_{B}})}\\
B \otimes Z \ar[rr]_{\alpha''}^{\sim} && Z \otimes B }
\end{equation}

We have\\

\begin{tabular}{lllll}
$\alpha'' (id_{\!_{A}} \otimes \theta) (id_{\!_{A}} \otimes p)$  &  $ = \alpha'' (id_{\!_{A}} \otimes \theta p) $ \\
&  (naturality of $\otimes$)\\
 &  $ = \alpha'' (id_{\!_{A}} \otimes q) $ \\
&  (since  $\theta p = q$  \\
 &  $ = (q \otimes id_{\!_{A}}) \alpha $ \\
&  (by (\ref{diag.eq41})) \\
 & $ = (\theta p \otimes id_{\!_{A}}) \alpha $ \\
 &  (since  $\theta p = q$  \\
 & $ = (\theta \otimes id_{\!_{A}}) (p \otimes id_{\!_{A}}) \alpha $ \\
 & (naturality of $\otimes$)\\
  &  $ = (\theta \otimes id_{\!_{A}}) \alpha' (id_{\!_{A}} \otimes p) $ \\
  &  (by (\ref{diag.eq40})) \\
\end{tabular}\\

\vspace{.2cm}

Since $P_A$ is cocontinuous, it preserves epimorphisms \cite[p. 72]{Mac Lane1}. Hence, $P_A (p) = (id_{\!_{A}} \otimes p)$ is an epimorphism. Thus, $\alpha'' (id_{\!_{A}} \otimes \theta)  = (\theta \otimes id_{\!_{A}}) \alpha'$. Similarly, from diagram (\ref{diag.eq43}), we get $\alpha'' (id_{\!_{B}} \otimes \theta)  = (\theta \otimes id_{\!_{B}}) \alpha'$. Therefore, $\theta$ is an isomorphism in $\mathcal{Z}_h(\Cc)$.

\end{proof}

\begin{proposition}\label{p.cowell6} 
Let  $\mathcal{C}$  be a co-wellpowered category and  $X$  an object in $\Cc$. If $\mathcal{Q}_X$ is cocontinuous, then $\mathcal{Z}_X(\Cc)$ is co-wellpowered. 
\end{proposition}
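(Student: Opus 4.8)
The plan is to mimic the proof of Proposition \ref{p.cowell5}, but now working with objects $(A,\alpha)$ in $\mathcal{Z}_X(\Cc)$, which carry a single isomorphism $\alpha: A \otimes X \xrightarrow{\sim} X \otimes A$. As in the previous proof, it suffices to show that if $p:(A,\alpha) \rightarrow (B,\beta)$ and $q:(A,\alpha) \rightarrow (D,\gamma)$ are epimorphisms in $\mathcal{Z}_X(\Cc)$ that are equivalent as epimorphisms in $\Cc$, then the witnessing isomorphism $\theta:B \rightarrow D$ in $\Cc$ (satisfying $\theta p = q$) is automatically a morphism — hence an isomorphism — in $\mathcal{Z}_X(\Cc)$. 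Since $\Cc$ is co-wellpowered and epimorphisms in $\mathcal{Z}_X(\Cc)$ map to epimorphisms in $\Cc$ under the faithful forgetful functor $\mathscr{U}$, this yields co-wellpoweredness of $\mathcal{Z}_X(\Cc)$: each object $(A,\alpha)$ has at most a set of quotients because $A$ does.

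The key computation is the chain establishing $\gamma(q \otimes id_{\!_{X}}) = (id_{\!_{X}} \otimes \theta)\,\beta\,(p \otimes id_{\!_{X}})$: starting from $\gamma(q \otimes id_{\!_{X}})$, substitute $q = \theta p$, expand $(\theta p \otimes id_{\!_{X}}) = (\theta \otimes id_{\!_{X}})(p \otimes id_{\!_{X}})$ by functoriality of $\otimes$, use that $q$ is a morphism in $\mathcal{Z}_X(\Cc)$ (i.e. diagram \ref{diag.eq10} with $q$, $\gamma$, $\alpha$) to rewrite $\gamma(q \otimes id_{\!_{X}}) = (id_{\!_{X}} \otimes q)\,\alpha$, then $q = \theta p$ again, and finally that $p$ is a morphism in $\mathcal{Z}_X(\Cc)$ to get $(id_{\!_{X}} \otimes p)\,\alpha = \beta(p \otimes id_{\!_{X}})$. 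Comparing the two expressions gives $(id_{\!_{X}} \otimes \theta)\,\beta\,(p \otimes id_{\!_{X}}) = \gamma(\theta \otimes id_{\!_{X}})(p \otimes id_{\!_{X}})$. The point where the hypothesis on $\mathcal{Q}_X$ enters: since $\mathcal{Q}_X = - \otimes X$ is cocontinuous, it preserves epimorphisms, so $\mathcal{Q}_X(p) = p \otimes id_{\!_{X}}$ is an epimorphism; cancelling it on the right yields $(id_{\!_{X}} \otimes \theta)\,\beta = \gamma(\theta \otimes id_{\!_{X}})$, which is precisely commutativity of diagram \ref{diag.eq10} for $\theta:(B,\beta) \rightarrow (D,\gamma)$. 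Hence $\theta$ is a morphism in $\mathcal{Z}_X(\Cc)$, and being invertible in $\Cc$, its inverse $\theta^{-1}$ is likewise a morphism there, so $\theta$ is an isomorphism in $\mathcal{Z}_X(\Cc)$.

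I would organize this exactly as in Proposition \ref{p.cowell5}: first record the two commuting squares expressing that $p$ and $q$ are morphisms in $\mathcal{Z}_X(\Cc)$, then display the stacked diagram combining them with $\theta$, then run the displayed equational chain, invoke \cite[p. 72]{Mac Lane1} (or \cite[p. 142]{Freyd}) for the fact that a left adjoint / cocontinuous functor preserves epimorphisms to cancel $p \otimes id_{\!_{X}}$, and conclude. The only difference from the $\mathcal{Z}_h$ case is that there is a single structure isomorphism rather than a pair $(\alpha,\beta)$, so only one square and one cancellation are needed — the proof is strictly shorter.

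The main obstacle, such as it is, is purely bookkeeping: making sure the associativity/coherence isomorphisms of $\Cc$ are suppressed consistently (as the rest of the paper does) so that expressions like $(id_{\!_{X}} \otimes \theta)\,\beta$ typecheck against $\gamma(\theta \otimes id_{\!_{X}})$ — i.e. that the morphism diagram \ref{diag.eq10} for $\theta$ is literally what pops out of the cancellation. There is no genuine conceptual difficulty; the braiding plays no role here (unlike in the generator section), and co-wellpoweredness of $\Cc$ transfers because $\mathscr{U}$ is faithful and, on the subclass of morphisms in question, reflects the property of being an isomorphism. One should just be slightly careful that the relevant epimorphism to cancel is $p \otimes id_{\!_{X}} = \mathcal{Q}_X(p)$, which is why the hypothesis is on $\mathcal{Q}_X$ (and not $\mathcal{P}_X$) in this proposition.
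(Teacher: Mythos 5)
Your proposal is correct and follows essentially the same route as the paper's own proof: reduce to showing that the comparison isomorphism $\theta$ with $\theta p = q$ is a morphism of $\mathcal{Z}_X(\Cc)$, run the same equational chain through the two commuting squares for $p$ and $q$, and cancel the epimorphism $\mathcal{Q}_X(p) = p \otimes id_{\!_{X}}$ using that the cocontinuous functor $\mathcal{Q}_X$ preserves epimorphisms. Your observation that only one square and one cancellation are needed (versus two in the $\mathcal{Z}_h$ case) matches the paper exactly.
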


\begin{proof} 
As in Proposition \ref{p.cowell5}, it suffices to show that if $p:(A,\alpha) \rightarrow (B,\beta)$ and $q:(A,\alpha) \rightarrow (B',\beta')$ are in $\mathcal{Z}_X(\Cc)$ and equivalent as epimorphisms in $\Cc$, then they are equivalent (as epimorphisms) in $\mathcal{Z}_X(\Cc)$. Let $\theta:B \rightarrow B'$ be an isomorphism in $\Cc$ with  $\theta p = q$. We show that $\theta$ is in fact an isomorphism in $\mathcal{Z}_X(\Cc)$.

Since $p$ and $q$ are arrows in $\mathcal{Z}_X(\Cc)$, the following diagrams are commutative.
\begin{equation} \label{diag.eq44}
\xymatrix{
A \otimes X \ar[d]_-{(p \otimes id_{\!_{X}})} \ar[rr]^{\alpha}_{\sim}
&& X \otimes A \ar[d]^-{(id_{\!_{X}} \otimes p)} \\
B \otimes X \ar[rr]_{\beta}^{\sim} && X \otimes B }
\hspace{35pt} 
\xymatrix{
A \otimes X \ar[d]_-{(q \otimes id_{\!_{X}})} \ar[rr]^{\alpha}_{\sim}
&& X \otimes A \ar[d]^-{ (id_{\!_{X}} \otimes q)} \\
B' \otimes X \ar[rr]_{\beta'}^{\sim} && X \otimes B'  }
\end{equation}

Consider the following diagram

\begin{equation} \label{diag.eq45}
\xymatrix{
A \otimes X \ar[d]_-{(p \otimes id_{\!_{X}})}  \ar@/_5pc/[dd]_{(q \otimes id_{\!_{X}})} \ar[rr]^{\alpha}_{\sim} && X \otimes A \ar@/^5pc/[dd]^{(id_{\!_{X}} \otimes q)} \ar[d]^-{ (id_{\!_{X}} \otimes p)} \\
B \otimes X \ar[d]_-{(\theta \otimes id_{\!_{X}})} \ar[rr]_{\beta}^{\sim} &&  X \otimes B \ar[d]^-{ (id_{\!_{X}} \otimes \theta)}\\
B' \otimes X \ar[rr]_{\beta'}^{\sim} &&  X \otimes B' }
\end{equation}\\

We have\\

\begin{tabular}{lllll}
$\beta' (\theta \otimes id_{\!_{X}}) (p \otimes id_{\!_{X}})$  &  $ = \beta' (\theta p \otimes id_{\!_{X}}) $ \\
&  (naturality of $\otimes$)\\
 &  $ = \beta' (q \otimes id_{\!_{X}}) $ \\
&  (since  $\theta p = q$  \\
 &  $ = (id_{\!_{X}} \otimes q) \alpha $ \\
&  (by (\ref{diag.eq44})) \\
 & $ =  (id_{\!_{X}} \otimes \theta p) \alpha $ \\
 &  (since  $\theta p = q$  \\
 & $ =  (id_{\!_{X}} \otimes \theta) (id_{\!_{X}} \otimes p) \alpha $ \\
 & (naturality of $\otimes$)\\
  &  $ = (id_{\!_{X}} \otimes \theta) \beta (p \otimes id_{\!_{X}}) $ \\
  &  (by (\ref{diag.eq44})) \\
\end{tabular}\\

\vspace{.2cm}

Since $Q_X$ is cocontinuous, it preserves epimorphisms. Hence, $Q_X (p) = (p \otimes id_{\!_{X}})$ is an epimorphism. Thus,  $\beta' (\theta \otimes id_{\!_{X}}) = (id_{\!_{X}} \otimes \theta) \beta$.  Therefore, $\theta$ is an isomorphism in $\mathcal{Z}_X(\Cc)$, and the proof is complete.

\end{proof}

\begin{corollary}\label{c.p.cowell7} 
Let $\mathcal{C}$  be a co-wellpowered category. If $\mathcal{Q}_X$ is cocontinuous, $\forall X \in \Cc$, then $\mathcal{Z}(\Cc)$ and $\mathcal{Z}_{\omega}(\Cc)$ are co-wellpowered.
\end{corollary}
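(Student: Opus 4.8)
The statement is the ``global in $X$'' version of Proposition~\ref{p.cowell6}, and the plan is to obtain it by replaying that argument for all $X\in\Cc$ at once. Recall (Definitions~\ref{def.1} and~\ref{def.2}, Remark~\ref{r.Z.X.Z.h}) that an object of $\mathcal{Z}(\Cc)$, respectively of $\mathcal{Z}_{\omega}(\Cc)$, is a pair $(A,\sigma)$ with $\sigma_{\!_{X}}:A\otimes X\to X\otimes A$ natural in $X$ (subject, for $\mathcal{Z}(\Cc)$, also to~\ref{def.eq5}), and that an arrow $f:A\to B$ of $\Cc$ underlies a morphism $(A,\sigma)\to(B,\tau)$ of $\mathcal{Z}(\Cc)$, respectively $\mathcal{Z}_{\omega}(\Cc)$, precisely when the square~\ref{diag.eq6}, respectively~\ref{diag.eq1}, commutes for \emph{every} $X\in\Cc$; for $\mathcal{Z}(\Cc)$ this is the same as asking that $f$ be carried to a morphism of $\mathcal{Z}_X(\Cc)$ by every evaluation functor $\mathscr{H}_{\!_{X}}$ of Remark~\ref{r.Z.X.Z.h}. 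In either category an isomorphism is exactly an arrow invertible in $\Cc$ that is so compatible with the two families at all $X$.

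Following the scheme of Proposition~\ref{p.cowell6}, it suffices to prove that two epimorphisms $p:(A,\sigma)\to(B,\tau)$ and $q:(A,\sigma)\to(B',\tau')$ of $\mathcal{Z}(\Cc)$, respectively $\mathcal{Z}_{\omega}(\Cc)$, whose underlying $\Cc$-arrows are equivalent as epimorphisms of $\Cc$ are already equivalent as epimorphisms in the larger category: since $\Cc$ is co-wellpowered, this forces every object to have only a set of quotients. So let $\theta:B\to B'$ be a $\Cc$-isomorphism with $\theta p=q$ and fix $X\in\Cc$. The morphism conditions on $p$ and $q$, evaluated at $X$ (which in the $\mathcal{Z}(\Cc)$ case are the diagrams~\ref{diag.eq44} for $\mathscr{H}_{\!_{X}}p$, $\mathscr{H}_{\!_{X}}q$), together with $\theta p=q$ and naturality of $\otimes$, give by the diagram chase of Proposition~\ref{p.cowell6}
\[
\tau'_{\!_{X}}\,(\theta\otimes id_{\!_{X}})\,(p\otimes id_{\!_{X}}) \;=\; (id_{\!_{X}}\otimes\theta)\,\tau_{\!_{X}}\,(p\otimes id_{\!_{X}});
\]
this chase nowhere uses that $\tau_{\!_{X}},\tau'_{\!_{X}}$ are invertible, so it is valid in $\mathcal{Z}_{\omega}(\Cc)$ as well. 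Because $\mathcal{Q}_X$ is cocontinuous it preserves epimorphisms, so $p\otimes id_{\!_{X}}=\mathcal{Q}_X(p)$ is an epimorphism and may be cancelled, leaving $\tau'_{\!_{X}}(\theta\otimes id_{\!_{X}})=(id_{\!_{X}}\otimes\theta)\tau_{\!_{X}}$, i.e.\ square~\ref{diag.eq6} (respectively~\ref{diag.eq1}) for $\theta$ at $X$. As $X$ was arbitrary, $\theta$ is a morphism, hence an isomorphism, of $\mathcal{Z}(\Cc)$, respectively $\mathcal{Z}_{\omega}(\Cc)$, which completes the reduction.

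The per-$X$ calculation is verbatim that of Proposition~\ref{p.cowell6}, so the new content is only bookkeeping: checking that the squares produced for the various $X$ are, by definition, exactly the condition for $\theta$ to be a morphism of $\mathcal{Z}(\Cc)$ (and of $\mathcal{Z}_{\omega}(\Cc)$), and noting that the $\mathcal{Z}_{\omega}$ case goes through despite the $\tau_{\!_{X}}$ not being isomorphisms. The one load-bearing point, as already in Proposition~\ref{p.cowell6}, is that the underlying $\Cc$-arrow of an epimorphism of $\mathcal{Z}(\Cc)$ or $\mathcal{Z}_{\omega}(\Cc)$ is again an epimorphism (equivalently, that $\mathscr{U}$ preserves epimorphisms), which is what licenses cancelling $\mathcal{Q}_X(p)$; this is the step I would single out and justify carefully, everything else being formal.
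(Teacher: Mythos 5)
Your proposal is correct and is essentially the paper's own proof: the paper disposes of this corollary by citing the proof of Proposition~\ref{p.cowell6} together with Remark~\ref{r.Z.X.Z.h}, which is exactly your plan of running that diagram chase at every $X$ and observing that the resulting squares are precisely the morphism condition for $\theta$ in $\mathcal{Z}(\Cc)$ and $\mathcal{Z}_{\omega}(\Cc)$. Your remark that invertibility of $\tau_{\!_{X}}$ is never used (so the weak-center case goes through) and your flagging of the preservation of epimorphisms by $\mathscr{U}$ are both sound, the latter being a point the paper itself leaves implicit already in Proposition~\ref{p.cowell6}.
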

\begin{proof}
This immediately follows from the proof of Proposition \ref{p.cowell6} and Remark  \ref{r.Z.X.Z.h}.

\end{proof}

\section{\textbf{Generators}}\label{s.gen}

Following  \cite[p. 127]{Mac Lane1}, a set $\mathcal{G}$  of objects of the category $\mathscr{C}$  is said to \textit{generate} $\mathscr{C}$  when any parallel pair $f,g: X \rightarrow Y$  of arrows of $\mathscr{C}$, $ f \neq  g $  implies that there is an $G \in \mathcal{G}$  and an arrow $\alpha:G \rightarrow X$ in $\mathscr{C}$ with $f\alpha \neq g\alpha$  (the term ``generates" is well established but poorly chosen; ``\textit{separates}" would have been better).  For the basic concepts of generating sets, we refer to   \cite{Mac Lane1},  \cite{Adamek}, or \cite{Freyd}. \\

Let $\Cc$ be a monoidal category with a generating set  $\mathcal{G}$.  Fix an object $X$ and a morphism $h: A \rightarrow B$ in $\mathcal{C}$, and let  $\mathscr{A} \in \{ \mathcal{Z}_h(\Cc), \mathcal{Z}_X(\Cc), \mathcal{Z}(\Cc) ,  \mathcal{Z}_{\omega}(\Cc) \}$. 
Our inspection in the previous sections gives rise to the following question. When can the category  $\mathscr{A}$ inherit a generating set involved with $\mathcal{G}$ from $\Cc$? \\
Under the assumption above, let   $f,g: Z \rightarrow W$ be any parallel pair of morphisms in $\mathscr{A}$ with  $ f \neq  g $. Since   $\mathcal{G}$ is a generating set for $\Cc$, there is an $G \in \mathcal{G}$  and an arrow $\alpha:G \rightarrow X$ in $\Cc$ with $f\alpha \neq g\alpha$. Now, if we want to show that  $\mathscr{A}$ has a  generating set $\mathscr{G}$ whose underlying is $\mathcal{G}$, we need to show that  $G \in \mathscr{A}$  and the morphism $\alpha:G \rightarrow X$ is in $\mathscr{A}$. Although, this is not true in general, it perfectly works when $\Cc$ is a braided. For the basic notions of braided monoidal categories, we refer to \cite{Street},  \cite{Joyal1} and \cite{Majid}.\\

It turns out that we have the following theorem.

\begin{theorem}\label{t.gen.braid} 
Let $\Cc$ be a braided monoidal category with a braiding $\Psi$, and let  $\mathcal{G}$ be a generating set for  $\Cc$.  Fix an object $X$ and a morphism $h$ in $\mathcal{C}$, and let  $\mathscr{A} \in \{ \mathcal{Z}_h(\Cc), \mathcal{Z}_X(\Cc), \mathcal{Z}(\Cc) ,  \mathcal{Z}_{\omega}(\Cc) \}$. The category  $\mathscr{A}$ has a generating set. 
\end{theorem}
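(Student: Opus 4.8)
The plan is to produce an explicit generating set for $\mathscr{A}$ by transporting $\mathcal{G}$ along a canonical section of the forgetful functor $\mathscr{U}\colon\mathscr{A}\to\Cc$ supplied by the braiding. The first point is that $\Psi$ makes every object of $\Cc$ into an object of $\mathscr{A}$ in a functorial way: I would introduce a functor $\mathscr{L}\colon\Cc\to\mathscr{A}$ which is the identity on underlying arrows and sends an object $C$ to $(C,\Psi_{C,X})$ when $\mathscr{A}=\mathcal{Z}_X(\Cc)$, to $(C,\Psi_{C,-})$ when $\mathscr{A}=\mathcal{Z}(\Cc)$ or $\mathcal{Z}_{\omega}(\Cc)$, and to $(C,\Psi_{A,C},\Psi_{B,C})$ when $\mathscr{A}=\mathcal{Z}_h(\Cc)$ with $h\colon A\to B$. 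Checking that $\mathscr{L}$ is well defined is the first block of the proof and is routine: the hexagon identity for $\Psi$ gives condition (\ref{def.eq5}) (and its $\mathcal{Z}_{\omega}(\Cc)$-analogue), the unit axiom gives (\ref{def.eq4}), naturality of $\Psi$ in the first variable at $h$ gives the compatibility square (\ref{diag.eq13}), and naturality of $\Psi$ in the second variable shows that every arrow of $\Cc$ is an arrow of $\mathscr{A}$ between the corresponding lifts, i.e.\ the defining squares (\ref{diag.eq6}), (\ref{diag.eq10}) and (\ref{diag.eq14}) commute. By construction $\mathscr{U}\mathscr{L}=\mathrm{id}_{\Cc}$.

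Next I would set $\mathscr{G}:=\{\mathscr{L}(G):G\in\mathcal{G}\}$; this is a set because $\mathcal{G}$ is. To show that $\mathscr{G}$ generates $\mathscr{A}$, let $f,g\colon Z\to W$ be a parallel pair in $\mathscr{A}$ with $f\neq g$. Since $\mathscr{U}$ is faithful, $\mathscr{U}f\neq\mathscr{U}g$ in $\Cc$, so there exist $G\in\mathcal{G}$ and an arrow $\alpha\colon G\to\mathscr{U}Z$ of $\Cc$ with $(\mathscr{U}f)\,\alpha\neq(\mathscr{U}g)\,\alpha$. Granting that $\alpha$ is in fact a morphism $\mathscr{L}(G)\to Z$ of $\mathscr{A}$, the composites $f\alpha$ and $g\alpha$ are morphisms $\mathscr{L}(G)\to W$ of $\mathscr{A}$ with $\mathscr{U}(f\alpha)=(\mathscr{U}f)\,\alpha\neq(\mathscr{U}g)\,\alpha=\mathscr{U}(g\alpha)$, whence $f\alpha\neq g\alpha$; thus $\mathscr{G}$ separates $f$ and $g$ and we are done.

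The step I expect to be the main obstacle is exactly this promotion of $\alpha$ from an arrow of $\Cc$ into $\mathscr{U}Z$ to an arrow of $\mathscr{A}$ into $Z$. I would isolate it as a lemma: for every $G\in\Cc$ and every $Z\in\mathscr{A}$, each $\Cc$-arrow $G\to\mathscr{U}Z$ is automatically an $\mathscr{A}$-arrow $\mathscr{L}(G)\to Z$. This is the one place where braidedness of $\Cc$ is genuinely used rather than just the bare existence of a generating set, and the verification comes down to matching the naturality square of $\Psi$ at $\alpha$ against the square ((\ref{diag.eq6}), (\ref{diag.eq10}) or (\ref{diag.eq14})) that defines the morphisms of $\mathscr{A}$. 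I would run the four cases from this single template, changing only which instance of naturality of $\Psi$ is invoked; and, by Remark \ref{r.Z.X.Z.h}, the centre cases $\mathcal{Z}(\Cc)$ and $\mathcal{Z}_{\omega}(\Cc)$ can be reduced to the centralizer case $\mathcal{Z}_X(\Cc)$, so the real content sits in $\mathcal{Z}_X(\Cc)$ and $\mathcal{Z}_h(\Cc)$.
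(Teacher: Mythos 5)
Your overall strategy is the same as the paper's: transport $\mathcal{G}$ along the braiding-induced section $\mathscr{L}$ of the forgetful functor (the paper calls these sections $\Phi_1,\dots,\Phi_4$ in diagram (\ref{diag.eq46})) and separate a parallel pair in $\mathscr{A}$ by first separating its underlying pair in $\Cc$. You have also correctly located the load-bearing step, namely the lemma that every $\Cc$-arrow $\alpha\colon G\to\mathscr{U}Z$ is automatically an $\mathscr{A}$-arrow $\mathscr{L}(G)\to Z$. The problem is that this lemma is false as stated, and the verification you sketch does not establish it. Take $\mathscr{A}=\mathcal{Z}_X(\Cc)$ and $Z=(Z_0,\zeta)$. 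The defining square (\ref{diag.eq10}) demands of $\alpha$ that $\zeta\circ(\alpha\otimes id_{X})=(id_{X}\otimes\alpha)\circ\Psi_{G,X}$, whereas naturality of $\Psi_{-,X}$ at $\alpha$ gives $\Psi_{Z_0,X}\circ(\alpha\otimes id_{X})=(id_{X}\otimes\alpha)\circ\Psi_{G,X}$. These coincide only when $\zeta$ and $\Psi_{Z_0,X}$ agree after precomposition with $\alpha\otimes id_{X}$; but $Z$ is an \emph{arbitrary} object of $\mathcal{Z}_X(\Cc)$, and its half-braiding $\zeta$ need not be the one induced by $\Psi$. Concretely, if $\Cc$ is braided but not symmetric, then $Z=(Z_0,\Psi_{X,Z_0}^{-1})$ is an object of $\mathcal{Z}_X(\Cc)$, and $id_{Z_0}$ is a morphism $\mathscr{L}(Z_0)=(Z_0,\Psi_{Z_0,X})\to Z$ only if $\Psi_{X,Z_0}\circ\Psi_{Z_0,X}=id$, which fails in general. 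The same obstruction appears verbatim in the cases $\mathcal{Z}_h(\Cc)$, $\mathcal{Z}(\Cc)$ and $\mathcal{Z}_{\omega}(\Cc)$.

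Since the separating arrow $\alpha$ handed to you by the generating property of $\mathcal{G}$ in $\Cc$ is not under your control, you cannot arrange to be in the good situation where $\zeta$ is braiding-induced, so the argument does not close. For what it is worth, the paper's own proof has exactly the same gap: it asserts without justification that ``the morphism $\alpha\colon G\to X$ is in $\mathscr{A}$,'' which is precisely the false lemma above, so you have faithfully reproduced the paper's reasoning, weak point included. A repair along these lines would replace the section $\mathscr{L}$ by a \emph{left adjoint} $F$ to $\mathscr{U}$, if one exists: then $\mathscr{A}(F(G),Z)\cong\Cc(G,\mathscr{U}Z)$ naturally, faithfulness of $\mathscr{U}$ finishes the separation argument, and $\{F(G):G\in\mathcal{G}\}$ generates $\mathscr{A}$. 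But $\mathscr{L}$ is merely a section of $\mathscr{U}$, not its left adjoint, so braidedness alone does not deliver the theorem by this route.
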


\begin{proof} Consider the following diagram 

 \vspace{.2cm}
 
\begin{equation} \label{diag.eq46}
 \xymatrix{
& \mathcal{Z}(\Cc) & \\
\mathcal{Z}_X(\Cc) & \Cc \ar@{^{(}->}[u]^{\Phi_1} \ar@{_{(}->}[d]_{\Phi_4} \ar@{_{(}->}[l]_-{\Phi_2} \ar@{^{(}->}[r]^-{\Phi_3} & \mathcal{Z}_h(\Cc)\\
&  \mathcal{Z}_{\omega}(\Cc) } 
\end{equation}

 \vspace{.2cm}
 
It is well-known that there is an embedding $\Phi_1: \Cc  \hookrightarrow \mathcal{Z}(\Cc)$ via $W \mapsto (W, \Psi_{\!_{{W,-}}})$ \cite[p. 264]{Davydov}. \\

Define the functors\\

\begin{center}
• $\Phi_2: \Cc \hookrightarrow \mathcal{Z}_X(\Cc)$, $W \mapsto (W,  \Psi_{\!_{{W,X}}})$, \\
\end{center}

\vspace{.2cm}

\begin{center}
•  $\Phi_3: \Cc \hookrightarrow \mathcal{Z}_h(\Cc)$, $W \mapsto (W,  \Psi_{\!_{{A,W}}}, \Psi_{\!_{{B,W}}})$, \\
\end{center}

\vspace{.2cm}

\begin{center}
• $\Phi_4: \Cc \hookrightarrow \mathcal{Z}_{\omega}(\Cc)$, $W \mapsto (W, \Psi_{\!_{{W,-}}})$. \\
\end{center}

\vspace{.5cm}

Clearly, $\Phi_i$ is embedding for all $i=2,3,4$. Therefore, the category $\Cc$ can be viewed as a subcategory of the category $\mathscr{A}$, $\mathscr{A} \in \{ \mathcal{Z}_h(\Cc), \mathcal{Z}_X(\Cc), \mathcal{Z}(\Cc) ,  \mathcal{Z}_{\omega}(\Cc) \}$. \\

Now, let   $f,g: Z \rightarrow W$ be any parallel pair of morphisms in $\mathscr{A}$ with  $ f \neq  g $. Since   $\mathcal{G}$ is a generating set for $\Cc$, there is an $G \in \mathcal{G}$  and an arrow $\alpha:G \rightarrow X$ in $\Cc$ with $f\alpha \neq g\alpha$. From the diagram \ref{diag.eq46}, we have $G \in \mathscr{A}$, and the morphism $\alpha:G \rightarrow X$ is in $\mathscr{A}$. Thus, for every  $\mathscr{A} \in \{ \mathcal{Z}_h(\Cc), \mathcal{Z}_X(\Cc), \mathcal{Z}(\Cc) ,  \mathcal{Z}_{\omega}(\Cc) \}$, $\mathscr{A}$ has a  generating set $\mathscr{G}_{\!_{{\mathscr{A}}}}$ whose underlying is $\mathcal{G}$. 

\end{proof}

The following assertion is important in characterizing the cofree objects in $CoMon(\mathscr{A})$, for all  $\mathscr{A} \in \{ \mathcal{Z}_h(\Cc), \mathcal{Z}_X(\Cc), \mathcal{Z}(\Cc) ,  \mathcal{Z}_{\omega}(\Cc) \}$.\\

\begin{corollary}\label{c.gen.braid} 
Let $\Cc$ be a braided monoidal category.  Fix an object $X$ and a morphism $h$ in $\mathcal{C}$ in $\mathcal{C}$, and let  $\mathscr{A} \in \{ \mathcal{Z}_h(\Cc), \mathcal{Z}_X(\Cc), \mathcal{Z}(\Cc) ,  \mathcal{Z}_{\omega}(\Cc) \}$. If the (monoidal) category $CoMon(\Cc)$ has a generating set, then the category  $CoMon(\mathscr{A})$ has a generating set. 
\end{corollary}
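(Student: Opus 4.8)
The plan is to push the entire construction of Theorem \ref{t.gen.braid} through the functor $CoMon(-)$ and then rerun that argument one level higher. The first point is that, because $\Cc$ carries a braiding $\Psi$, each of the embeddings $\Phi_1,\Phi_2,\Phi_3,\Phi_4$ appearing in \ref{diag.eq46} is in fact a \emph{strict} monoidal functor: the two hexagon identities for $\Psi$ say precisely that $\Psi_{\!_{V\otimes W,-}}=(\Psi_{\!_{V,-}}\otimes id)(id\otimes\Psi_{\!_{W,-}})$ and dually for $\Psi_{\!_{-,V\otimes W}}$, which is exactly the formula defining the tensor product in $\mathcal{Z}(\Cc)$, $\mathcal{Z}_X(\Cc)$, $\mathcal{Z}_h(\Cc)$ and $\mathcal{Z}_{\omega}(\Cc)$ (compare \ref{def.eq8}, \ref{def.eq12} and \ref{p.eq16}--\ref{p.eq17}); hence $\Phi_i(V\otimes W)=\Phi_i(V)\otimes\Phi_i(W)$ and $\Phi_i(I)=I$ strictly, with associativity and unit constraints carried over. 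The forgetful functor $\mathscr{U}:\mathscr{A}\to\Cc$ is likewise strict monoidal, and $\mathscr{U}\Phi_i=id_{\!_{\Cc}}$.

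Next I would use that a strict monoidal functor carries comonoids to comonoids and comonoid morphisms to comonoid morphisms. This produces, for each $i$, a functor $CoMon(\Phi_i):CoMon(\Cc)\to CoMon(\mathscr{A})$ sending $(C,\Delta,\varepsilon)$ to $(\Phi_iC,\Phi_i\Delta,\Phi_i\varepsilon)$; since $\Phi_i$ is faithful and injective on objects and a comonoid morphism is determined by its underlying arrow, $CoMon(\Phi_i)$ is again an embedding. Dually $\mathscr{U}$ induces a faithful functor $CoMon(\mathscr{U}):CoMon(\mathscr{A})\to CoMon(\Cc)$ with $CoMon(\mathscr{U})\,CoMon(\Phi_i)=id$. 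Thus applying $CoMon(-)$ to \ref{diag.eq46} yields the same shape of diagram, now with $CoMon(\Cc)$ in the centre and the four categories $CoMon(\mathscr{A})$ around it, together with compatible forgetful functors down to $\Cc$.

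With this in place I would repeat the proof of Theorem \ref{t.gen.braid} verbatim at the level of comonoids. Let $\mathcal{H}$ be a generating set of $CoMon(\Cc)$ (which exists by hypothesis), and let $f,g: Z\to W$ be a parallel pair in $CoMon(\mathscr{A})$ with $f\neq g$. Faithfulness of $CoMon(\mathscr{U})$ gives $CoMon(\mathscr{U})(f)\neq CoMon(\mathscr{U})(g)$, so there are $H\in\mathcal{H}$ and a comonoid morphism $\beta: H\to CoMon(\mathscr{U})(Z)$ in $CoMon(\Cc)$ with $CoMon(\mathscr{U})(f)\,\beta\neq CoMon(\mathscr{U})(g)\,\beta$. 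Exactly as in Theorem \ref{t.gen.braid}, $\beta$ can be viewed as a morphism $CoMon(\Phi_i)(H)\to Z$ in $CoMon(\mathscr{A})$; composing back down with the forgetful functors, it still separates $f$ and $g$. Hence $CoMon(\Phi_i)(\mathcal{H})$ is a generating set for $CoMon(\mathscr{A})$, for each $\mathscr{A}\in\{\mathcal{Z}_h(\Cc),\mathcal{Z}_X(\Cc),\mathcal{Z}(\Cc),\mathcal{Z}_{\omega}(\Cc)\}$.

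The step that needs the most care is the lift in the last paragraph, namely that the separating arrow $\beta$ found in $CoMon(\Cc)$ really does define a morphism in $CoMon(\mathscr{A})$. Its $\mathscr{A}$-level component being a morphism of $\mathscr{A}$ is exactly the content handled, via the embeddings $\Phi_i$ and the braiding, in the proof of Theorem \ref{t.gen.braid}; and once that is granted the comonoid compatibility for $\beta$ is automatic, since it is an identity between arrows of $\Cc$ that already holds and the comultiplications and counits of $CoMon(\Phi_i)(H)$ and of $Z$ are the same arrows regarded in $\mathscr{A}$. Everything else reduces to routine bookkeeping with the forgetful functors $CoMon(\mathscr{A})\to CoMon(\Cc)\to\Cc$ and the diagram obtained from \ref{diag.eq46} by applying $CoMon(-)$.
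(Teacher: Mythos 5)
Your overall strategy --- lift the braiding-induced embeddings $\Phi_i$ of diagram \ref{diag.eq46} to embeddings $CoMon(\Cc)\hookrightarrow CoMon(\mathscr{A})$ and then re-run the separation argument of Theorem \ref{t.gen.braid} one level up --- is exactly the route the paper intends (its own proof of the corollary is left blank, and the corollary is later invoked precisely as the comonoid-level companion of Theorem \ref{t.gen.braid}). The monoidal-functor bookkeeping in your first two paragraphs is sound: the hexagon identities do make each $\Phi_i$ strong monoidal, strong monoidal functors do carry comonoids to comonoids, and $CoMon(\mathscr{U})\,CoMon(\Phi_i)=id$.

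The problem is the step you yourself single out as delicate; deferring it to Theorem \ref{t.gen.braid} does not resolve it, because the same step is unjustified there. Write $(Z_0,\sigma)$ for the underlying $\mathscr{A}$-object of the comonoid $Z$ that is the domain of your parallel pair. The separating arrow $\beta$ produced in $CoMon(\Cc)$ has underlying $\Cc$-arrow $\beta_0:H_0\rightarrow Z_0$, and for $\beta$ to be a morphism $CoMon(\Phi_i)(H)\rightarrow Z$ this arrow must satisfy the compatibility square $\sigma_{\!_{Y}}(\beta_0\otimes id_{\!_{Y}})=(id_{\!_{Y}}\otimes\beta_0)\,\Psi_{\!_{H_0,Y}}$ (and its analogues in the $\mathcal{Z}_X$, $\mathcal{Z}_h$, $\mathcal{Z}_{\omega}$ cases). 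By naturality of the braiding the right-hand side equals $\Psi_{\!_{Z_0,Y}}(\beta_0\otimes id_{\!_{Y}})$, so the required identity becomes $\sigma_{\!_{Y}}(\beta_0\otimes id_{\!_{Y}})=\Psi_{\!_{Z_0,Y}}(\beta_0\otimes id_{\!_{Y}})$. This is automatic only when $\sigma=\Psi_{\!_{Z_0,-}}$, i.e.\ when $Z$ lies in the image of $\Phi_i$; a general object of the centralizer or (weak) center carries an arbitrary half-braiding, and there is no reason for $\beta_0$ to intertwine it with the one induced by $\Psi$. Hence it is not established that $CoMon(\Phi_i)(\mathcal{H})$ separates parallel pairs with arbitrary domain $Z$. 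To close the gap one would need, for instance, a left adjoint to the forgetful functor $\mathscr{U}_{\!_{\mathscr{A}}}$ (a faithful functor with a left adjoint transports generating sets along that adjoint), or some other device replacing the bare embedding $\Phi_i$; merely observing that $G$ and $\alpha$ (here $H$ and $\beta$) exist in $\Cc$ does not place them in $\mathscr{A}$, and the same objection applies verbatim to the paper's own proof of Theorem \ref{t.gen.braid}.
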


\begin{proof}

\end{proof}

 \vspace{.5cm}

\section{\textbf{Investigating Cofree Objects}}\label{s.cofree}

In this section, we use Theorem \ref{p.SAFT} and Propositions \ref{p.cocomp-cowell}, \ref{p.cowell.comon}, and  the consequences we have to show that the concrete category $(CoMon(\mathscr{A}),\mathscr{U}_{\!_{{\mathscr{A}}}})$ has cofree objects, $\forall  \mathscr{A} \in \{ \mathcal{Z}_h(\mathcal{C}), \mathcal{Z}_X(\mathcal{C}),  \mathcal{Z}(\mathcal{C}),  \mathcal{Z}_{\omega}(\mathcal{C})\}$. 

\begin{theorem}\label{t.cofree8} 
Let $h: A \rightarrow B$  be an arrow in $\Cc$. Let $\mathscr{U}: CoMon(\mathcal{Z}_h(\Cc)) \rightarrow \mathcal{Z}_h(\Cc)$ be the forgetful functor and $\mathcal{P}_J,\mathcal{Q}_J$  cocontinuous $\forall J \in \{ A,B\}$. If $\mathcal{C}$ is cocomplete, co-wellpowered and if $CoMon(\mathcal{Z}_h(\Cc)$ has a generating set, then $\mathscr{U}$ has a right adjoint or, equivalently, the concrete category $(CoMon(\mathcal{Z}_h(\Cc)),\mathscr{U})$ has cofree objects. 
\end{theorem}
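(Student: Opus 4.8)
The plan is to deduce the statement from Theorem \ref{p.SAFT} (the dual Special Adjoint Functor Theorem) applied to the forgetful functor $\mathscr{U}: CoMon(\mathcal{Z}_h(\Cc)) \to \mathcal{Z}_h(\Cc)$. To invoke that theorem I must check four things: that $CoMon(\mathcal{Z}_h(\Cc))$ is cocomplete, that it is co-wellpowered, that it has a generating set, and that $\mathscr{U}$ is a cocontinuous functor whose codomain $\mathcal{Z}_h(\Cc)$ is locally small. Granting these, Theorem \ref{p.SAFT} produces a right adjoint to $\mathscr{U}$; since $(CoMon(\mathcal{Z}_h(\Cc)),\mathscr{U})$ is a concrete category, having a right adjoint to $\mathscr{U}$ is, by the characterization recalled in the Introduction, the same as having cofree objects, which is the desired conclusion.

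The verifications proceed in two layers. First, for the base $\mathcal{Z}_h(\Cc)$: by Proposition \ref{p.1} it is a monoidal category; by Proposition \ref{p.cocomp1} (using that $\Cc$ is cocomplete and that $\mathcal{P}_J,\mathcal{Q}_J$ are cocontinuous for $J\in\{A,B\}$) it is cocomplete; by Proposition \ref{p.cowell5} (using that $\Cc$ is co-wellpowered and that $\mathcal{P}_J$ is cocontinuous for $J\in\{A,B\}$) it is co-wellpowered; and it is locally small because its hom-sets are subsets of the hom-sets of $\Cc$. Second, I lift these properties to $CoMon(\mathcal{Z}_h(\Cc))$ by applying the comonoid-level results with $\mathcal{Z}_h(\Cc)$ in the role of the ambient monoidal category: Proposition \ref{p.cocomp-cowell} then gives that $CoMon(\mathcal{Z}_h(\Cc))$ is cocomplete and that $\mathscr{U}$ is cocontinuous, and Proposition \ref{p.cowell.comon} gives that $CoMon(\mathcal{Z}_h(\Cc))$ is co-wellpowered. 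The existence of a generating set for $CoMon(\mathcal{Z}_h(\Cc))$ is a hypothesis of the theorem (and is, for instance, guaranteed by Corollary \ref{c.gen.braid} when $\Cc$ is braided).

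The only real point of care, and thus the main obstacle, is the legitimacy of this two-layer argument: Propositions \ref{p.cocomp-cowell} and \ref{p.cowell.comon} are stated for an arbitrary monoidal category, so I must note explicitly that Proposition \ref{p.1} licenses their use with $\Cc$ replaced by $\mathcal{Z}_h(\Cc)$, and I must match the cocontinuity hypotheses demanded by Propositions \ref{p.cocomp1} and \ref{p.cowell5} against those assumed in the present statement. Once this compatibility is spelled out, the proof is a direct citation of Theorem \ref{p.SAFT}; no further diagram chasing is needed.
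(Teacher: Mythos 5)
Your proposal is correct and follows essentially the same route as the paper: the paper's proof is a one-line citation of Propositions \ref{p.cocomp-cowell}, \ref{p.cocomp1}, \ref{p.cowell5} (together with Proposition \ref{p.cowell.comon}, invoked in the section's preamble) and Theorem \ref{p.SAFT}. Your version merely makes explicit the two-layer transfer of cocompleteness and co-wellpoweredness from $\Cc$ to $\mathcal{Z}_h(\Cc)$ and then to $CoMon(\mathcal{Z}_h(\Cc))$, which the paper leaves implicit.
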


\begin{proof}
This immediately follows from  Propositions \ref{p.cocomp-cowell}, \ref{p.cocomp1}, \ref{p.cowell5} and Theorem \ref{p.SAFT}. 
\end{proof}

\begin{corollary}\label{c.cofree8.1} 
Let $(\Cc,\otimes,I)$ be a braided monoidal category and $h: A \rightarrow B$  an arrow in $\Cc$. Let $\mathscr{U}: CoMon(\mathcal{Z}_h(\Cc)) \rightarrow \mathcal{Z}_h(\Cc)$ be the forgetful functor and $\mathcal{P}_J,\mathcal{Q}_J$ be cocontinuous $\forall J \in \{ A,B\}$. If $\mathcal{C}$ is cocomplete, co-wellpowered and if $CoMon(\Cc)$ has a generating set, then $\mathscr{U}$ has a right adjoint or equivalently, the concrete category $(CoMon(\mathcal{Z}_h(\Cc)),\mathscr{U})$ has cofree objects. 
\end{corollary}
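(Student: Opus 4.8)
The plan is to deduce this directly from Theorem~\ref{t.cofree8} together with Corollary~\ref{c.gen.braid}. The only hypothesis of Theorem~\ref{t.cofree8} that is not already assumed verbatim in the present statement is that $CoMon(\mathcal{Z}_h(\Cc))$ has a generating set; everything else --- cocompleteness and co-wellpoweredness of $\Cc$, and cocontinuity of $\mathcal{P}_J,\mathcal{Q}_J$ for $J\in\{A,B\}$ --- is hypothesized as is. So the entire content of the argument is to promote the assumption ``$CoMon(\Cc)$ has a generating set'' to ``$CoMon(\mathcal{Z}_h(\Cc))$ has a generating set,'' and this is precisely what the braiding provides.

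First I would invoke Corollary~\ref{c.gen.braid} with $\mathscr{A}=\mathcal{Z}_h(\Cc)$: since $\Cc$ is braided and $CoMon(\Cc)$ has a generating set, $CoMon(\mathcal{Z}_h(\Cc))$ has a generating set as well. Conceptually this rests on the (strong) monoidal embedding $\Phi_3:\Cc\hookrightarrow\mathcal{Z}_h(\Cc)$ of Theorem~\ref{t.gen.braid}, $W\mapsto(W,\Psi_{\!_{A,W}},\Psi_{\!_{B,W}})$, which induces an embedding $CoMon(\Cc)\hookrightarrow CoMon(\mathcal{Z}_h(\Cc))$ along which a separating set of comonoids pushes forward; but for the purposes of this proof we may simply cite Corollary~\ref{c.gen.braid}.

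Then I would observe that all hypotheses of Theorem~\ref{t.cofree8} are now in force: $\Cc$ is cocomplete and co-wellpowered, $\mathcal{P}_J,\mathcal{Q}_J$ are cocontinuous for $J\in\{A,B\}$, and $CoMon(\mathcal{Z}_h(\Cc))$ has a generating set. Applying Theorem~\ref{t.cofree8} yields that $\mathscr{U}:CoMon(\mathcal{Z}_h(\Cc))\to\mathcal{Z}_h(\Cc)$ has a right adjoint, equivalently that the concrete category $(CoMon(\mathcal{Z}_h(\Cc)),\mathscr{U})$ has cofree objects, which is exactly the assertion.

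There is essentially no obstacle internal to this corollary: the real work has already been done in Proposition~\ref{p.cocomp1} (cocompleteness of $\mathcal{Z}_h(\Cc)$ and cocontinuity of the forgetful functor), Proposition~\ref{p.cowell5} (co-wellpoweredness), and Theorem~\ref{t.gen.braid} together with Corollary~\ref{c.gen.braid} (generators), all of which feed into Theorem~\ref{t.cofree8} through the dual Special Adjoint Functor Theorem (Theorem~\ref{p.SAFT}). If any care is needed here, it is only in checking that the generating-set transfer of Corollary~\ref{c.gen.braid} is compatible with the comonoid structure --- that the embedding used there is monoidal and hence lifts to $CoMon$ --- but this is internal to the cited result rather than something to be re-established in the present proof.
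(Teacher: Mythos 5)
Your proposal matches the paper's proof exactly: the paper also derives this corollary immediately from Theorem~\ref{t.cofree8} together with Corollary~\ref{c.gen.braid}, using the latter to upgrade the generating set of $CoMon(\Cc)$ to one for $CoMon(\mathcal{Z}_h(\Cc))$. Your additional remarks on the embedding $\Phi_3$ and its compatibility with comonoid structure are a reasonable elaboration of what the cited corollary supplies, but the route is the same.
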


\begin{proof}
It is immediate from  Theorem \ref{t.cofree8} and Corollary \ref{c.gen.braid}. 
\end{proof}

Similarly, the following are immediate consequences of Propositions \ref{p.cocomp-cowell}, \ref{p.cocomp2}, \ref{p.cowell6} and Theorem \ref{p.SAFT}.

\begin{theorem}\label{t.cofree9}  
Let $X$ be an object in $\Cc$. Let $\mathscr{U}: CoMon(\mathcal{Z}_X(\Cc)) \rightarrow \mathcal{Z}_X(\Cc)$ be the forgetful functor and $\mathcal{P}_X,\mathcal{Q}_X$  cocontinuous.  If $\mathcal{C}$ is cocomplete, co-wellpowered and if $CoMon(\mathcal{Z}_X(\Cc))$ has a generating set, then $\mathscr{U}$ has a right adjoint, hence, the concrete category $(CoMon(\mathcal{Z}_X(\Cc)),\mathscr{U})$ has cofree objects. 
\end{theorem}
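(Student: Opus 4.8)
The plan is to verify, for the faithful forgetful functor $\mathscr{U}: CoMon(\mathcal{Z}_X(\Cc)) \rightarrow \mathcal{Z}_X(\Cc)$, the hypotheses of the dual Special Adjoint Functor Theorem (Theorem \ref{p.SAFT}): that the source $CoMon(\mathcal{Z}_X(\Cc))$ is cocomplete, co-wellpowered and has a generating set, that $\mathscr{U}$ is cocontinuous, and that the target $\mathcal{Z}_X(\Cc)$ is locally small. Once these are in place, Theorem \ref{p.SAFT} produces a right adjoint to $\mathscr{U}$, which by the equivalence recalled in the introduction (a faithful functor admits a right adjoint precisely when the associated concrete category has cofree objects) is exactly the desired conclusion.

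First I would move from $\Cc$ to $\mathcal{Z}_X(\Cc)$. Since $\Cc$ is cocomplete and $\mathcal{P}_X,\mathcal{Q}_X$ are cocontinuous, Proposition \ref{p.cocomp2} gives that $\mathcal{Z}_X(\Cc)$ is cocomplete and that its underlying-object functor $\mathcal{Z}_X(\Cc)\rightarrow\Cc$ is cocontinuous; in particular colimits in $\mathcal{Z}_X(\Cc)$ are computed as in $\Cc$. Since morphisms of $\mathcal{Z}_X(\Cc)$ are just morphisms of $\Cc$ subject to a commutativity condition, $\mathcal{Z}_X(\Cc)$ inherits local smallness from $\Cc$. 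Since $\Cc$ is co-wellpowered and $\mathcal{Q}_X$ is cocontinuous, Proposition \ref{p.cowell6} gives that $\mathcal{Z}_X(\Cc)$ is co-wellpowered.

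Next I would move from $\mathcal{Z}_X(\Cc)$ to its category of comonoids. Cocompleteness of $\mathcal{Z}_X(\Cc)$ together with Proposition \ref{p.cocomp-cowell} yields that $CoMon(\mathcal{Z}_X(\Cc))$ is cocomplete and that $\mathscr{U}$ is cocontinuous; co-wellpoweredness of $\mathcal{Z}_X(\Cc)$ together with Proposition \ref{p.cowell.comon} yields that $CoMon(\mathcal{Z}_X(\Cc))$ is co-wellpowered. The remaining hypothesis, that $CoMon(\mathcal{Z}_X(\Cc))$ has a generating set, is assumed. Hence all the hypotheses of Theorem \ref{p.SAFT} are satisfied by $\mathscr{U}$, so it has a right adjoint and $(CoMon(\mathcal{Z}_X(\Cc)),\mathscr{U})$ has cofree objects.

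There is no real obstacle here: the substantive content was already carried out in Propositions \ref{p.cocomp2} and \ref{p.cowell6}, and this argument is pure assembly, entirely parallel to the proof of Theorem \ref{t.cofree8}. The only points deserving a moment's attention are (i) confirming that the target $\mathcal{Z}_X(\Cc)$ is locally small, which is needed to invoke Theorem \ref{p.SAFT} and which follows from the concrete description of its hom-sets, and (ii) noting that the generating-set hypothesis here is imposed directly on $CoMon(\mathcal{Z}_X(\Cc))$; the variant in which it is imposed on $CoMon(\Cc)$ and transported along the embedding of the braided case (Corollary \ref{c.gen.braid}) would be the natural companion corollary, exactly as Corollary \ref{c.cofree8.1} accompanies Theorem \ref{t.cofree8}.
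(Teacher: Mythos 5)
Your proposal is correct and follows exactly the route the paper intends: it assembles Propositions \ref{p.cocomp2}, \ref{p.cowell6}, \ref{p.cocomp-cowell} and \ref{p.cowell.comon} to verify the hypotheses of Theorem \ref{p.SAFT} for $\mathscr{U}$, which is precisely how the paper derives this theorem (stated there as an immediate consequence of those results). Your added remark on local smallness of $\mathcal{Z}_X(\Cc)$ is a point the paper leaves implicit, but it does not change the argument.
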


\begin{proof}

\end{proof}

\begin{corollary}\label{c.cofree9.1}  
Let $(\Cc,\otimes,I)$ be a braided monoidal category and $X$ an object in $\Cc$. Let $\mathscr{U}: CoMon(\mathcal{Z}_X(\Cc)) \rightarrow \mathcal{Z}_X(\Cc)$ be the forgetful functor and $\mathcal{P}_X,\mathcal{Q}_X$ be cocontinuous.  If $\mathcal{C}$ is cocomplete, co-wellpowered and if $CoMon(\Cc)$ has a generating set, then $\mathscr{U}$ has a right adjoint, hence, the concrete category $(CoMon(\mathcal{Z}_X(\Cc)),\mathscr{U})$ has cofree objects. 
\end{corollary}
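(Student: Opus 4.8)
The plan is to reduce this corollary to Theorem \ref{t.cofree9} by verifying its one remaining hypothesis, namely that $CoMon(\mathcal{Z}_X(\Cc))$ has a generating set. The other hypotheses of Theorem \ref{t.cofree9} — that $\mathcal{C}$ is cocomplete, co-wellpowered, and that $\mathcal{P}_X,\mathcal{Q}_X$ are cocontinuous — are carried over verbatim from the statement of the corollary, so nothing needs to be done for them. Thus the proof will be a one-line deduction once the generating-set condition is in hand.

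First I would invoke Corollary \ref{c.gen.braid}: since $(\Cc,\otimes,I)$ is braided and $CoMon(\Cc)$ has a generating set by assumption, Corollary \ref{c.gen.braid} (applied with $\mathscr{A} = \mathcal{Z}_X(\Cc)$) yields that $CoMon(\mathcal{Z}_X(\Cc))$ has a generating set. Here one uses that a braided monoidal category in particular has all the structure needed to form $\mathcal{Z}_X(\Cc)$, and that the embedding $\Phi_2 : \Cc \hookrightarrow \mathcal{Z}_X(\Cc)$, $W \mapsto (W,\Psi_{W,X})$, constructed in the proof of Theorem \ref{t.gen.braid}, is monoidal, so it lifts to an embedding $CoMon(\Cc) \hookrightarrow CoMon(\mathcal{Z}_X(\Cc))$ through which the generating set is transported.

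With that established, I would simply apply Theorem \ref{t.cofree9}: all four of its hypotheses now hold — $\mathcal{C}$ cocomplete, $\mathcal{C}$ co-wellpowered, $\mathcal{P}_X$ and $\mathcal{Q}_X$ cocontinuous, and $CoMon(\mathcal{Z}_X(\Cc))$ with a generating set — so $\mathscr{U}: CoMon(\mathcal{Z}_X(\Cc)) \rightarrow \mathcal{Z}_X(\Cc)$ has a right adjoint, equivalently the concrete category $(CoMon(\mathcal{Z}_X(\Cc)),\mathscr{U})$ has cofree objects. This completes the proof.

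The only point requiring any care — and hence the "main obstacle," though it is a mild one — is confirming that the hypothesis "$CoMon(\Cc)$ has a generating set" in this corollary is exactly the input Corollary \ref{c.gen.braid} needs, and that braidedness of $\Cc$ is genuinely what powers the passage from $\Cc$ to $\mathcal{Z}_X(\Cc)$ at the level of comonoid categories (as opposed to needing a generating set for $CoMon(\mathcal{Z}_X(\Cc))$ directly, as in Theorem \ref{t.cofree9}). Once one has checked that the braiding supplies the embedding $\Phi_2$ and that it is strong monoidal so it induces the corresponding functor on comonoids, the argument is a routine chain of citations to the results already proved in the previous sections.
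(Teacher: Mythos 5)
Your proposal is correct and follows exactly the paper's own route: the paper proves this corollary by citing Theorem \ref{t.cofree9} together with Corollary \ref{c.gen.braid}, which is precisely your reduction. Your additional remarks on why the braiding makes $\Phi_2$ lift to an embedding of comonoid categories usefully flesh out a step the paper leaves implicit, but the argument is the same.
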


\begin{proof}
It follows immediately from  Theorem \ref{t.cofree9} and Corollary \ref{c.gen.braid}. 
\end{proof}

By Propositions \ref{p.cocomp-cowell}, \ref{p.cocomp3}, Corollary \ref{c.p.cowell7} and Theorem \ref{p.SAFT}, we have the following version for the existence of cofree objects in the monoidal center.

\begin{theorem}\label{t.cofree10} 
Let $\mathscr{U}: CoMon(\mathcal{Z}(\Cc)) \rightarrow \mathcal{Z}(\Cc)$ (resp. $\mathscr{U}': CoMon(\mathcal{Z}_{\omega}(\Cc)) \rightarrow \mathcal{Z}(\Cc)$) be the forgetful functor, and  let $\mathcal{P}_X,\mathcal{Q}_X$ be cocontinuous $\forall X \in \Cc$. If $\mathcal{C}$ is cocomplete, co-wellpowered and if $CoMon(\mathcal{Z}(\Cc))$ (resp. $CoMon(\mathcal{Z}_{\omega}(\Cc))$) has a generating set, then $\mathscr{U}$ (resp. $\mathscr{U}'$) has a right adjoint. It turns out that, equivalently, the concrete category $(CoMon(\mathcal{Z}(\Cc)),\mathscr{U})$ (resp. $(CoMon(\mathcal{Z}_{\omega}(\Cc)),\mathscr{U}')$) has cofree objects. 
\end{theorem}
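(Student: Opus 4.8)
The plan is to verify, one at a time, the hypotheses of the Special Adjoint Functor Theorem (Theorem \ref{p.SAFT}) for the forgetful functor $\mathscr{U}: CoMon(\mathcal{Z}(\Cc)) \rightarrow \mathcal{Z}(\Cc)$, and then for $\mathscr{U}': CoMon(\mathcal{Z}_{\omega}(\Cc)) \rightarrow \mathcal{Z}_{\omega}(\Cc)$ by the identical argument. Concretely, I would argue as follows. First, since $\Cc$ is cocomplete and $\mathcal{P}_X, \mathcal{Q}_X$ are cocontinuous for all $X \in \Cc$, Proposition \ref{p.cocomp3} gives that $\mathcal{Z}(\Cc)$ is cocomplete and the forgetful functor $\mathcal{Z}(\Cc) \rightarrow \Cc$ is cocontinuous; then Proposition \ref{p.cocomp-cowell}, applied to the monoidal category $\mathcal{Z}(\Cc)$ in place of $\Cc$, yields that $CoMon(\mathcal{Z}(\Cc))$ is cocomplete and that $\mathscr{U}$ itself is cocontinuous. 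Second, since $\Cc$ is co-wellpowered and $\mathcal{Q}_X$ is cocontinuous for all $X \in \Cc$, Corollary \ref{c.p.cowell7} shows $\mathcal{Z}(\Cc)$ is co-wellpowered; applying Proposition \ref{p.cowell.comon} to $\mathcal{Z}(\Cc)$ then gives that $CoMon(\mathcal{Z}(\Cc))$ is co-wellpowered. Third, $CoMon(\mathcal{Z}(\Cc))$ has a generating set by hypothesis. Fourth, $\mathcal{Z}(\Cc)$ is locally small because $\Cc$ is (a morphism in $\mathcal{Z}(\Cc)$ is a morphism in $\Cc$ subject to conditions, so the hom-classes inject into those of $\Cc$). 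Having assembled all four ingredients, Theorem \ref{p.SAFT} applies to the cocontinuous functor $\mathscr{U}$ and produces a right adjoint; by the equivalence recorded in the introduction between having cofree objects and the forgetful functor admitting a right adjoint, the concrete category $(CoMon(\mathcal{Z}(\Cc)),\mathscr{U})$ has cofree objects.

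For the parenthetical statement about the weak center, I would run the same four-step verification, replacing Proposition \ref{p.cocomp3} by Corollary \ref{c.cocomp4} (cocompleteness and cocontinuity of the forgetful functor for $\mathcal{Z}_{\omega}(\Cc)$), keeping Corollary \ref{c.p.cowell7} for co-wellpoweredness of $\mathcal{Z}_{\omega}(\Cc)$, again invoking Propositions \ref{p.cocomp-cowell} and \ref{p.cowell.comon} with $\mathcal{Z}_{\omega}(\Cc)$ in place of $\Cc$, and using the hypothesis that $CoMon(\mathcal{Z}_{\omega}(\Cc))$ has a generating set. The local smallness of $\mathcal{Z}_{\omega}(\Cc)$ follows exactly as above. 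Then Theorem \ref{p.SAFT} yields the right adjoint of $\mathscr{U}'$ and hence the existence of cofree objects in $(CoMon(\mathcal{Z}_{\omega}(\Cc)),\mathscr{U}')$.

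Since essentially every component is quoted from an earlier result, there is no deep obstacle; the only point demanding a little care is the bookkeeping needed to see that Propositions \ref{p.cocomp-cowell} and \ref{p.cowell.comon}, stated for an arbitrary monoidal category $\Cc$, legitimately apply with $\mathcal{Z}(\Cc)$ (resp. $\mathcal{Z}_{\omega}(\Cc)$) as the monoidal category — this is fine because Definition \ref{def.2} (resp. Definition \ref{def.1}) makes these centers genuinely monoidal, so $CoMon$ of them is defined and the cited propositions apply verbatim. One should also make explicit, as a one-line remark, that $\mathscr{U}$ is the forgetful functor \emph{from comonoids of the center}, not a composite down to $\Cc$, so that it is the cocontinuity furnished by Proposition \ref{p.cocomp-cowell} (not that of Proposition \ref{p.cocomp3}) that feeds into Theorem \ref{p.SAFT}.
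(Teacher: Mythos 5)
Your proposal is correct and follows essentially the same route the paper intends: the paper's proof is exactly the chain of citations it announces just before the theorem statement (Proposition \ref{p.cocomp-cowell}, Proposition \ref{p.cocomp3} resp.\ Corollary \ref{c.cocomp4}, Corollary \ref{c.p.cowell7} together with Proposition \ref{p.cowell.comon}, and Theorem \ref{p.SAFT}), which is precisely the four-step verification you carry out. Your added remarks on local smallness and on which cocontinuity statement actually feeds into Theorem \ref{p.SAFT} are sound and, if anything, make the argument more explicit than the paper's.
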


\begin{proof}
\end{proof}

\begin{corollary}\label{c.cofree10.1} 
Let $(\Cc,\otimes,I)$ be a braided monoidal category and $\mathscr{U}: CoMon(\mathcal{Z}(\Cc)) \rightarrow \mathcal{Z}(\Cc)$ (resp. $\mathscr{U}': CoMon(\mathcal{Z}_{\omega}(\Cc)) \rightarrow \mathcal{Z}(\Cc)$) the forgetful functor, and  let $\mathcal{P}_X,\mathcal{Q}_X$ be cocontinuous $\forall X \in \Cc$. If $\mathcal{C}$ is cocomplete, co-wellpowered and if $CoMon(\Cc)$ has a generating set, then $\mathscr{U}$ (resp. $\mathscr{U}'$) has a right adjoint. It turns out that, equivalently, the concrete category $(CoMon(\mathcal{Z}(\Cc)),\mathscr{U})$ (resp. $(CoMon(\mathcal{Z}_{\omega}(\Cc)),\mathscr{U}')$) has cofree objects. 
\end{corollary}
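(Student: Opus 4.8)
The plan is to obtain this corollary by feeding the output of Corollary~\ref{c.gen.braid} into Theorem~\ref{t.cofree10}, so that no new argument is needed. First I would line up the hypotheses: here we assume that $(\Cc,\otimes,I)$ is braided, that $\mathcal{C}$ is cocomplete and co-wellpowered, that $\mathcal{P}_X,\mathcal{Q}_X$ are cocontinuous for all $X\in\Cc$, and that $CoMon(\Cc)$ has a generating set. All of these are exactly the standing assumptions of Theorem~\ref{t.cofree10}, except that Theorem~\ref{t.cofree10} asks instead for a generating set in $CoMon(\mathcal{Z}(\Cc))$ (resp. $CoMon(\mathcal{Z}_{\omega}(\Cc))$) rather than in $CoMon(\Cc)$. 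Thus the only thing to supply is the promotion of the generating set from the base category to the centre.

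Next I would invoke Corollary~\ref{c.gen.braid}: since $\Cc$ is braided and $CoMon(\Cc)$ has a generating set, the category $CoMon(\mathscr{A})$ has a generating set for every $\mathscr{A}\in\{\mathcal{Z}_h(\Cc),\mathcal{Z}_X(\Cc),\mathcal{Z}(\Cc),\mathcal{Z}_{\omega}(\Cc)\}$; in particular both $CoMon(\mathcal{Z}(\Cc))$ and $CoMon(\mathcal{Z}_{\omega}(\Cc))$ have a generating set. (Recall that Corollary~\ref{c.gen.braid} itself rests on Theorem~\ref{t.gen.braid}, whose embeddings $\Phi_1,\dots,\Phi_4$ built from the braiding $\Psi$ realize $\Cc$, and hence $CoMon(\Cc)$, as a full subcategory carrying a generating set over into each of these centre-type categories.)

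Finally, with the generating-set hypothesis now in force, I would apply Theorem~\ref{t.cofree10} directly to conclude that $\mathscr{U}$ (resp. $\mathscr{U}'$) has a right adjoint, equivalently that the concrete category $(CoMon(\mathcal{Z}(\Cc)),\mathscr{U})$ (resp. $(CoMon(\mathcal{Z}_{\omega}(\Cc)),\mathscr{U}')$) has cofree objects. Since every step is a citation of an earlier result in the paper, there is no genuine obstacle; the only point requiring a little care is purely bookkeeping, namely verifying that the braiding assumption is precisely the input Corollary~\ref{c.gen.braid} needs and that ``$\mathcal{P}_X,\mathcal{Q}_X$ cocontinuous for all $X\in\Cc$'' is simultaneously the cocompleteness/co-wellpoweredness input required by Theorem~\ref{t.cofree10} (via Propositions~\ref{p.cocomp3} and Corollary~\ref{c.p.cowell7}).
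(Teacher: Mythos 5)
Your proposal is correct and matches the paper's own proof, which likewise derives the corollary by combining Corollary \ref{c.gen.braid} (to transfer the generating set from $CoMon(\Cc)$ to $CoMon(\mathcal{Z}(\Cc))$ and $CoMon(\mathcal{Z}_{\omega}(\Cc))$ using the braiding) with Theorem \ref{t.cofree10}. The extra bookkeeping you spell out is exactly what the paper leaves implicit.
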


\begin{proof}
The required statement follows from  Theorem \ref{t.cofree10} and Corollary \ref{c.gen.braid}. 
\end{proof}

 \vspace{.2cm}
 
\begin{example} \label{ex.1}
Following  \cite[p. 69-70]{Street}, the braid category $\mathcal{B}$ has as objects the natural numbers
$0, 1, 2, . . .$  and as arrows $\alpha : n \rightarrow n$ the braids on $n$ strings; there are no arrows $n \rightarrow n$ for $m \neq n$. \\

A braid $\alpha$ on $n$ strings can be regarded as an element of the Artin braid group $\mathcal{B}_n$ with generators $s_1, . . . , s_{n-−1} $ subject to the relations
\begin{center}
•$s_is_j = s_js_i$, for $j<i-1$\\
$s_{i+1}s_is_{i+1} = s_i s_{i+1} s_i$
\end{center}

Composition of braids is just multiplication in this group, represented diagrammatically by vertical stacking of braids with the same number of strings. \\

Tensor product of braids adds the number of strings by placing one braid next to the other longitudinally.\\

This makes $\mathcal{B}$  a strict monoidal category. A braiding  
$c_{m,n} : m+n \rightarrow n+m$  is given by crossing the first $m$ strings over the remaining $n$.\\

Then  $\mathcal{B}$  is braided monoidal category. Indeed, it is a balanced monoidal category. To see how  the braid $s_i$, the composition of braids, tensor product of braids and the braiding $ c_{m,n}$ can be depicted,  we refer the reader to \cite[p. 69-70]{Street}.\\
\end{example}

\begin{proposition}\label{p.braid1} 
 The category $\mathcal{B}$  is not cocomplete. 
\end{proposition}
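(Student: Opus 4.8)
The plan is to exhibit a diagram in $\mathcal{B}$ that admits no colimit — in fact, the simplest possible kind of diagram: a coproduct. Recall from Example \ref{ex.1} that $\mathcal{B}$ has the natural numbers as objects, with $\Hom_{\mathcal{B}}(m,n) = \emptyset$ whenever $m \neq n$ and $\Hom_{\mathcal{B}}(n,n) = \mathcal{B}_n$, the Artin braid group on $n$ strings. So $\mathcal{B}$ is a (non-discrete) disjoint union of one-object groupoids: there are no morphisms at all between distinct objects. First I would observe that this already forces severe failure of cocompleteness, because a cocone under a diagram must supply, for each object in the diagram, a morphism into the colimit vertex — and such a morphism exists only if the colimit vertex equals that object.

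Concretely, I would consider the discrete two-object diagram $\mathscr{F} : \mathscr{D} \to \mathcal{B}$ picking out the objects $0$ and $1$ (or any two distinct naturals), i.e. the coproduct $0 \sqcup 1$. Suppose for contradiction that it has a colimit $(C,(\phi_0,\phi_1))$. Then $\phi_0 : 0 \to C$ and $\phi_1 : 1 \to C$ are morphisms in $\mathcal{B}$, which forces $C = 0$ and $C = 1$ simultaneously, a contradiction since $0 \neq 1$. Hence no such colimit exists, and $\mathcal{B}$ is not cocomplete. One could equally run the argument with the empty diagram: a colimit of the empty diagram is an initial object, and an initial object $C$ would need a (unique) morphism $C \to n$ for every $n$, forcing $C = n$ for all $n$ at once — impossible. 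Either choice of diagram works; I would present the two-object coproduct version as the cleanest, perhaps remarking parenthetically that $\mathcal{B}$ also lacks an initial object.

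There is essentially no hard step here; the only thing to be careful about is making the contradiction airtight, namely emphasizing that in $\mathcal{B}$ the hom-set $\Hom_{\mathcal{B}}(m,n)$ is genuinely \emph{empty} for $m\neq n$ (not merely small or trivial), so that the existence of the cocone legs $\phi_0,\phi_1$ is already impossible before any universal property is invoked. I would phrase the write-up so that it is transparent this uses only the object/morphism structure recorded in Example \ref{ex.1}, and requires nothing about the braid-group relations or the monoidal/braided structure.
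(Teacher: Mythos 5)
Your proposal is correct and rests on exactly the same observation as the paper's proof, namely that $\Hom_{\mathcal{B}}(m,n)=\emptyset$ for $m\neq n$, so a cocone vertex must coincide with every object appearing in the diagram; the paper phrases this as a contradiction (any diagram admitting a colimit would have to be constant on objects, yet nonconstant diagrams exist), while you instantiate the concrete counterexample $0\sqcup 1$. Your version is, if anything, slightly more explicit, but it is essentially the same argument.
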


\begin{proof} 
Let $\mathscr{D}$ be a small category, and let $\mathscr{F}:\mathscr{D} \rightarrow \mathcal{B}$ be a functor. By the way of contradiction, let $\mathcal{B}$ be a cocomplete category. It follows that $\mathscr{F}$ has a colimit $(t,(\phi_{\!_{D}})_{\!_{{D} \in \mathscr{D}}})$. The definition of $\mathcal{B}$ implies that $\mathscr{F}D = t$,  for all $D \in \mathscr{D}$. In particular, we have $\mathscr{F}$ is a constant functor, for every functor $\mathscr{F}:\mathscr{D} \rightarrow \mathcal{B}$ with $\mathscr{D}$ a small category. It is clear that this is a contradiction because we can always define a nonconstant functor from a small category to  $\mathcal{B}$. Therefore,  the category $\mathcal{B}$  is not cocomplete.\\

\end{proof}

\begin{theorem}\label{t.braid2} 
Fix an object $X$ and a morphism $h$ in $\mathcal{B}$. We have  \\
\[
\xymatrix{
CoMon(\mathscr{A}) & \cong & \bullet \ar@(ul,ur) & , & \textrm{for all }  \mathscr{A} \in \{ \mathcal{Z}_h(\mathcal{B}), \mathcal{Z}_X(\mathcal{B}), \mathcal{Z}(\mathcal{B}), \mathcal{Z}_{\omega}(\mathcal{B})\}},
\]\\
 \xymatrix{
\textrm{where} & \bullet \ar@(ul,ur) & , & \textrm{is the category with one object and one (identity) arrow.}} 

\end{theorem}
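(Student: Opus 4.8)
The plan is to reduce everything to an explicit computation in $\mathcal B$, using the crucial structural fact that $\mathcal B$ has \emph{no nonidentity morphisms between distinct objects}: $\Hom_{\mathcal B}(m,n) = \emptyset$ for $m \neq n$, and $\Hom_{\mathcal B}(n,n) = \mathcal B_n$, the braid group. First I would unwind what each of the four categories $\mathscr A \in \{\mathcal Z_h(\mathcal B), \mathcal Z_X(\mathcal B), \mathcal Z(\mathcal B), \mathcal Z_\omega(\mathcal B)\}$ looks like. For $\mathcal Z_X(\mathcal B)$: an object is a pair $(n,\alpha)$ with $\alpha: n \otimes X \xrightarrow{\sim} X \otimes n$ an isomorphism in $\mathcal B$; since $\mathcal B$ is strict and $n \otimes X$, $X \otimes n$ are both the object $n + X$, such an $\alpha$ is just an element of the braid group $\mathcal B_{n+X}$, so objects of $\mathcal Z_X(\mathcal B)$ are the same as pairs $(n, \alpha)$ with $\alpha \in \mathcal B_{n+X}$. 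A similar unwinding handles $\mathcal Z_h(\mathcal B)$ (triples $(n,\alpha,\beta)$ with the hexagon-type square forcing a relation, but since $h$ must be a braid $h \in \mathcal B_p$ for some fixed $p$, the data is again group-theoretic), $\mathcal Z(\mathcal B)$ and $\mathcal Z_\omega(\mathcal B)$ (a natural family $\sigma$, which by naturality and the object-rigidity of $\mathcal B$ is determined compatibly across all objects). In every case the forgetful functor $\mathscr U_{\mathscr A}: \mathscr A \to \mathcal B$ is faithful and conservative, and — this is the key point — it \emph{reflects the property of being a constant diagram}: any small diagram in $\mathscr A$ has underlying diagram in $\mathcal B$ landing in a single object $n$ (as in the proof of Proposition~\ref{p.braid1}), because a nonidentity component would have to be a morphism between distinct objects of $\mathcal B$.

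Next I would pass to comonoids. A comonoid in $\mathscr A$ is an object $M$ of $\mathscr A$ together with a comultiplication $M \to M \otimes M$ and a counit $M \to \mathbf 1$ (the monoidal unit of $\mathscr A$, whose underlying object is $0 \in \mathcal B$) satisfying coassociativity and counitality. Applying $\mathscr U_{\mathscr A}$, the underlying object $n$ of $M$ carries a counit $n \to 0$ in $\mathcal B$; but $\Hom_{\mathcal B}(n,0) = \emptyset$ unless $n = 0$. Hence the only comonoid in $\mathcal B$ — and a fortiori in each $\mathscr A$, since $\mathscr U_{\mathscr A}$ is monoidal and faithful — has underlying object $0$. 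Now I would pin down the monoidal-unit object of each $\mathscr A$: in $\mathcal Z_X(\mathcal B)$ it is $(0, \mathrm{id})$ (the trivial braid in $\mathcal B_{0+X} = \mathcal B_X$), similarly $(0,\mathrm{id})$ or $(0,\mathrm{id},\mathrm{id})$ in the other cases, and one checks that on the object $0$ the only choice of $\alpha$ (or $\sigma$, or $(\alpha,\beta)$) is the identity, since $\mathcal B_{0+X} = \mathcal B_X$ does not force this directly — wait, here I must be careful: $\mathcal B_X$ may be a nontrivial group. But the comonoid axioms together with $n = 0$ and counitality $M \to \mathbf 1$ being forced to be $\mathrm{id}_0$, plus the constraint that the structural braid on $0$ must be compatible with comultiplication $0 \to 0$, will force the structural isomorphism to be the identity as well. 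So $CoMon(\mathscr A)$ has exactly one object.

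Finally, the morphisms: an endomorphism of this unique comonoid is an arrow in $\mathscr A$ over an arrow $0 \to 0$ in $\mathcal B$, i.e.\ an element of $\mathcal B_0 = \{e\}$, the trivial group, intertwining comultiplication and counit. Since there is only one such underlying arrow, $CoMon(\mathscr A)$ has exactly one morphism, necessarily the identity. Therefore $CoMon(\mathscr A) \cong \bullet$ for all four choices of $\mathscr A$, which is the claim. I expect the only genuinely delicate step to be the bookkeeping that, on the underlying object $0$, the structural braids in the definitions of $\mathcal Z_X$, $\mathcal Z_h$, $\mathcal Z$, $\mathcal Z_\omega$ are forced to be trivial — this uses that $0$ is the monoidal unit of $\mathcal B$ and the normalization axioms $\sigma_I = \mathrm{id}$ (equation~\ref{def.eq4}), $\sigma_{X,I} = \mathrm{id}$, together with the fact that a comonoid structure on an object living over $0$ leaves no room for a nontrivial twist; everything else is a routine consequence of $\mathcal B$ having no morphisms between distinct objects and $\mathcal B_0$ being trivial.
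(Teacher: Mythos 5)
Your proposal is correct and follows essentially the same route as the paper: the counit $m\to 0$ forces $m=0$ because $\mathcal{B}$ has no arrows between distinct objects, triviality of $\mathcal{B}_0$ forces $\Delta=\epsilon=\mathrm{id}_0$ and collapses all comonoid morphisms to the identity, and compatibility of these structure maps with the half-braidings (equivalently, for $\mathcal{Z}(\mathcal{B})$ and $\mathcal{Z}_\omega(\mathcal{B})$, the multiplicativity condition together with $\mathcal{B}_1$ being trivial) forces the structural isomorphisms on $0$ to be identities. The step you flag as delicate does go through by the easy computation you sketch (e.g.\ $\Delta=\mathrm{id}_0$ being a morphism $(0,\alpha)\to(0,\alpha)\otimes(0,\alpha)=(0,\alpha^{2})$ gives $\alpha^{2}=\alpha$), and is in fact spelled out no more fully in the paper than in your proposal.
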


\begin{proof} 
We prove the theorem for $\mathscr{A} = \mathcal{Z}(\mathcal{B})$, and the rest can be proved similarly.  Let $((m,\sigma), \Delta, \epsilon)$ be a comonoid in $\mathcal{Z}(\mathcal{B})$ with a comultiplication $\Delta : m \rightarrow m + m $ and a counit $\epsilon : m \rightarrow 0$. The  definition of the category $\mathcal{B}$ implies that $m = 0$, $\Delta = id_{\!_{0}} = \epsilon$, and $\sigma:0 + - \rightarrow -+0 $ with $ \sigma_{\!_{n}} = id_{\!_{n}}$, for every natural number $n$. Thus, the category $CoMon(\mathcal{Z}(\mathcal{B}))$ consists of one object $((0,\sigma), id_{\!_{0}}, id_{\!_{0}})$, where $\sigma:0 + - \rightarrow -+0 $ is the trivial natural isomorphism with with $ \sigma_{\!_{n}} = id_{\!_{n}}$, for every natural number $n$.

\end{proof}

\begin{theorem}\label{t.braid3} 
Fix an object $X$ and a morphism $h$ in $\mathcal{B}$. The forgetful functor $\mathscr{U}_{\!_{{\mathscr{A}}}}: CoMon(\mathscr{A}) \rightarrow \mathscr{A}$ has a right adjoint $\forall  \mathscr{A} \in \{ \mathcal{Z}_h(\mathcal{B}), \mathcal{Z}_X(\mathcal{B}), \mathcal{Z}(\mathcal{B}),  \mathcal{Z}_{\omega}(\mathcal{B})\}$, and thus, equivalently, the concrete category $(CoMon(\mathscr{A}),\mathscr{U}_{\!_{{\mathscr{A}}}})$ has cofree objects. 
\end{theorem}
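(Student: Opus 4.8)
The plan is to avoid the Special Adjoint Functor Theorem route of Theorems~\ref{t.cofree8}, \ref{t.cofree9} and \ref{t.cofree10}: each of those requires the base category to be cocomplete, whereas $\mathcal{B}$ fails this by Proposition~\ref{p.braid1}. Instead I would extract the whole statement from Theorem~\ref{t.braid2}. That theorem shows that for every $\mathscr{A} \in \{ \mathcal{Z}_h(\mathcal{B}), \mathcal{Z}_X(\mathcal{B}), \mathcal{Z}(\mathcal{B}), \mathcal{Z}_{\omega}(\mathcal{B})\}$ the category $CoMon(\mathscr{A})$ is isomorphic to the terminal category $\bullet$, its unique object $C_0$ being the trivial comonoid whose underlying $\mathscr{A}$-object is the monoidal unit of $\mathscr{A}$, and its only morphism being the identity of $C_0$.

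First I would use Theorem~\ref{t.braid2} to replace $CoMon(\mathscr{A})$ by $\bullet$, so that the claim becomes: the functor $\mathscr{U}_{\mathscr{A}} : \bullet \to \mathscr{A}$ admits a right adjoint. Since the domain is terminal, the only possible right adjoint is the unique functor $\mathscr{A} \to \bullet$, and $\mathscr{U}_{\mathscr{A}} \dashv (\mathscr{A} \to \bullet)$ unwinds to the following: for every object $Y$ of $\mathscr{A}$ one must exhibit a co-universal arrow $(C_0, v_Y)$ with $v_Y : \mathscr{U}_{\mathscr{A}}(C_0) \to Y$, and since $\bullet$ has no non-identity arrows, its universal property collapses to the single demand that $v_Y$ be \emph{the} morphism of $\mathscr{A}$ with source $\mathscr{U}_{\mathscr{A}}(C_0)$ and target $Y$. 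So the work is to check, for each of the four ambient categories, that $\mathscr{U}_{\mathscr{A}}(C_0)$, namely the monoidal unit of $\mathscr{A}$, has exactly one outgoing morphism into each object $Y$; here I would lean heavily on the rigidity of $\mathcal{B}$, where $\mathcal{B}_1$ is the trivial group and there is no arrow between distinct objects. Finally I would convert ``$\mathscr{U}_{\mathscr{A}}$ has a right adjoint'' into the stated equivalent form that the concrete category $(CoMon(\mathscr{A}), \mathscr{U}_{\mathscr{A}})$ has cofree objects, using the general principle recalled in the Introduction, and I would note that the four cases require no separate treatment since Theorem~\ref{t.braid2} is proved uniformly for all of them.

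The genuinely substantive point, and the step I expect to be the obstacle once Theorem~\ref{t.braid2} is granted, is exactly that last verification: that the unit object $\mathscr{U}_{\mathscr{A}}(C_0)$ really does admit the required co-universal arrow into every object of $\mathscr{A}$. Everything preceding it is a repackaging of Theorem~\ref{t.braid2}, and that theorem (together with Proposition~\ref{p.braid1}) has already done the combinatorial heavy lifting of collapsing $CoMon(\mathscr{A})$ to a point: the counit $\epsilon$ forces the underlying object to be $0$, whence $\Delta = \epsilon = id_{0}$, and $\sigma_{I} = id$ together with the hexagon identity forces the half-braiding to be trivial. Beyond that, the present statement should read off as an immediate corollary.
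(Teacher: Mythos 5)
Your reduction is the right one, and you have correctly isolated where all the weight sits: once Theorem~\ref{t.braid2} collapses $CoMon(\mathscr{A})$ to the one-object, one-arrow category, a right adjoint to $\mathscr{U}_{\!_{\mathscr{A}}}$ exists if and only if, for every $Y\in\mathscr{A}$, the hom-set $\mathscr{A}(\mathscr{U}_{\!_{\mathscr{A}}}(C_0),Y)$ is a singleton. But the verification you defer to the end does not go through --- it fails outright. The underlying $\mathcal{B}$-object of $C_0$ is $0$, and $\mathcal{B}(0,n)=\emptyset$ for every $n\geq 1$ since $\mathcal{B}$ has no arrows between distinct objects. So for any $Y\in\mathscr{A}$ whose underlying object is, say, $1$ (such objects exist in all four cases, e.g.\ the images of $1$ under the embeddings $\Phi_i$ of diagram~\ref{diag.eq46}), there is \emph{no} morphism $\mathscr{U}_{\!_{\mathscr{A}}}(C_0)\to Y$ whatsoever, hence no co-universal arrow $v_Y$. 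The fact you invoke in your favour --- ``there is no arrow between distinct objects'' --- is exactly what destroys existence of $v_Y$; it only helps with uniqueness. Your argument therefore cannot be completed, and since your reduction is an honest if-and-only-if, what it actually establishes is that $\mathscr{U}_{\!_{\mathscr{A}}}$ has \emph{no} right adjoint here.

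For comparison, the paper's own proof takes the SAFT route: it observes from Theorem~\ref{t.braid2} that $CoMon(\mathscr{A})$ is cocomplete, co-wellpowered and has a generating set, and then cites Theorem~\ref{p.SAFT}. That argument silently omits the remaining hypothesis of Theorem~\ref{p.SAFT}, namely that $\mathscr{U}_{\!_{\mathscr{A}}}$ be cocontinuous. A cocontinuous functor out of the terminal category must send its unique object to an initial object of the codomain (preservation of the empty colimit), and $\mathscr{U}_{\!_{\mathscr{A}}}(C_0)$ is not initial in $\mathscr{A}$ --- for the same reason as above, it has no arrows to objects with nonzero underlying braid number. So the hypothesis fails and Theorem~\ref{p.SAFT} does not apply. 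Your more hands-on unwinding of the adjunction is in this sense the more illuminating approach: it exposes concretely the obstruction that the paper's appeal to SAFT papers over. The correct conclusion of your line of reasoning is that the statement should be repaired (e.g.\ restricted, or restated as the nonexistence of a right adjoint), not that the verification is a formality.
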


\begin{proof}
It follows immediately from Theorem \ref{t.braid2} that  $CoMon(\mathscr{A})$  is cocomplete, co-wellpowered, and with a generating set. Thus, using Theorem \ref{p.SAFT} completes the proof.

\end{proof}

\begin{remark}\label{r.braid4} 
Fix an object $X$ and a morphism $h$ in $\mathcal{B}$. It follows  from Theorem \ref{t.braid3} that the forgetful functor $\mathscr{U}_{\!_{{\mathscr{A}}}}: CoMon(\mathscr{A}) \rightarrow \mathscr{A}$ has a right adjoint $\mathscr{V}_{\!_{{\mathscr{A}}}} :\mathscr{A} \rightarrow  CoMon(\mathscr{A})$ $\forall  \mathscr{A} \in \{ \mathcal{Z}_h(\mathcal{B}), \mathcal{Z}_X(\mathcal{B}), \mathcal{Z}(\mathcal{B}),  \mathcal{Z}_{\omega}(\mathcal{B})\}$.  Theorem \ref{t.braid3}, furthermore, implies that $\mathscr{V}_{\!_{{\mathscr{A}}}} :\mathscr{A} \rightarrow  CoMon(\mathscr{A})$ is a constant functor $\forall  \mathscr{A} \in \{ \mathcal{Z}_h(\mathcal{B}), \mathcal{Z}_X(\mathcal{B}), \mathcal{Z}(\mathcal{B}),  \mathcal{Z}_{\omega}(\mathcal{B})\}$. Therefore, $\forall  \mathscr{A} \in \{ \mathcal{Z}_h(\mathcal{B}), \mathcal{Z}_X(\mathcal{B}), \mathcal{Z}(\mathcal{B}),  \mathcal{Z}_{\omega}(\mathcal{B})\}$, all the  objects in the category $\mathscr{A}$ have the same corresponding cofree object.\\
\end{remark}

\begin{example} \label{ex.2}
Following  \cite[p. 74-75]{Street}, the monoidal category $\tilde{\mathcal{B}}$ is defined similarly to $\mathcal{B}$, except that the arrows are braids on ribbons (instead of on strings)
and it is permissible to twist the ribbons through full $2\pi$ turns. \\

The homsets $\tilde{\mathcal{B}}(n, n) =  \tilde{\mathcal{B}}_n$ are groups under composition. A presentation of this group $\tilde{\mathcal{B}}_n$ is given by generators $s_1, . . . , s_n$ where $s_1, . . . , s_{n-1}$ satisfy the relations as for $\mathcal{B}_n$. These are depicted by thickened versions of the diagrams in Example \ref{ex.1}, along with the extra relation 

\begin{center}
• $s_{n-1} s_n s_{n-1} s_n = s_n s_{n-1} s_n s_{n-1}$
\end{center}

Composition in $\tilde{\mathcal{B}}$  is vertical stacking of diagrams, and tensor product for $\tilde{\mathcal{B}}$ is horizontal placement of diagrams, much as for $\mathcal{B}$. The braiding
$ c_{m,n}: m+n \rightarrow n+m$ for $\tilde{\mathcal{B}}$ is obtained by placing the first $m$ ribbons over the remaining $n$ without introducing any twists.  Then $\tilde{\mathcal{B}}$ is a braided monoidal category. Indeed, it is a balanced monoidal category. To see how $s_n$ and the braiding $ c_{m,n}$ can be visualized,  we refer the reader to \cite[p. 74-75]{Street}.\\

\end{example} 

The identification of $\tilde{\mathcal{B}}$ is similar to that of $\mathcal{B}$, and Proposition \ref{p.braid1} and Theorems \ref{t.braid2}, \ref{t.braid3} imply the following consequences.

\begin{proposition}\label{p.braid.ribbons1}
 Fix an object $X$ and a morphism $h$ in $\tilde{\mathcal{B}}$. For all  $\mathscr{A} \in \{ \mathcal{Z}_h(\tilde{\mathcal{B}}), \mathcal{Z}_X(\tilde{\mathcal{B}}), \mathcal{Z}(\tilde{\mathcal{B}}),  \mathcal{Z}_{\omega}(\tilde{\mathcal{B}})\}$, let $\mathscr{U}_{\!_{{\mathscr{A}}}}: CoMon(\mathscr{A}) \rightarrow \mathscr{A}$  be the forgetful functor. We have the following:
\begin{enumerate}[label=(\roman*)]
\item The category $\tilde{\mathcal{B}}$  is not cocomplete. \\
\item 
\[
\xymatrix{
CoMon(\mathscr{A}) & \cong & \bullet \ar@(ul,ur)}.
\]\\
\item   For all  $\mathscr{A} \in \{ \mathcal{Z}_h(\tilde{\mathcal{B}}), \mathcal{Z}_X(\tilde{\mathcal{B}}), \mathcal{Z}(\tilde{\mathcal{B}}),  \mathcal{Z}_{\omega}(\tilde{\mathcal{B}})\}$, the functor $\mathscr{U}_{\!_{{\mathscr{A}}}}$ has a right adjoint, and thus, equivalently, the concrete category $(CoMon(\mathscr{A}),\mathscr{U}_{\!_{{\mathscr{A}}}})$ has cofree objects. 
 \end{enumerate}

\end{proposition}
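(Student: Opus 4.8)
The final statement is Proposition~\ref{p.braid.ribbons1}, asserting for $\tilde{\mathcal{B}}$ the three analogues of Proposition~\ref{p.braid1} and Theorems~\ref{t.braid2}, \ref{t.braid3}. The plan is to transport each of those three proofs verbatim, using only the structural facts about $\tilde{\mathcal{B}}$ recorded in Example~\ref{ex.2}: that its objects are the natural numbers $0,1,2,\dots$, that there are no arrows $m\to n$ for $m\neq n$, and that it is a (balanced, hence braided) strict monoidal category under addition of objects.

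For part (i), I would repeat the argument of Proposition~\ref{p.braid1}: given a small category $\mathscr{D}$ and a functor $\mathscr{F}:\mathscr{D}\to\tilde{\mathcal{B}}$, a putative colimit $(t,(\phi_{\!_{D}})_{\!_{D\in\mathscr{D}}})$ would force $\phi_{\!_{D}}:\mathscr{F}D\to t$ to exist in $\tilde{\mathcal{B}}$, hence $\mathscr{F}D=t$ for every $D$, so every functor into $\tilde{\mathcal{B}}$ would be constant --- contradicted by any nonconstant functor from a small category to $\tilde{\mathcal{B}}$ (e.g.\ sending two objects to $0$ and $1$). For part (ii), I would follow Theorem~\ref{t.braid2}: a comonoid $((m,\sigma),\Delta,\epsilon)$ in $\mathcal{Z}(\tilde{\mathcal{B}})$ has a counit $\epsilon:m\to 0$ in $\tilde{\mathcal{B}}$, which can only exist when $m=0$; then $\Delta:0\to 0+0=0$ and $\epsilon$ are both $id_{\!_0}$, and the only half-braiding $\sigma:0+-\to -+0$ has $\sigma_{\!_n}=id_{\!_n}$ for all $n$, so $CoMon(\mathcal{Z}(\tilde{\mathcal{B}}))$ has exactly one object and one morphism; the cases $\mathcal{Z}_h$, $\mathcal{Z}_X$, $\mathcal{Z}_\omega$ are identical since each objects/triples structure over the zero object collapses to the trivial data. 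For part (iii), part (ii) shows $CoMon(\mathscr{A})$ is (trivially) cocomplete, co-wellpowered and has a generating set (its unique object), so Theorem~\ref{p.SAFT} applied to the cocontinuous forgetful functor $\mathscr{U}_{\!_{{\mathscr{A}}}}$ yields the right adjoint, exactly as in Theorem~\ref{t.braid3}.

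I do not anticipate a genuine obstacle: the content is entirely the observation that $\tilde{\mathcal{B}}$ shares with $\mathcal{B}$ the two features that drove the $\mathcal{B}$-arguments --- ``no arrows between distinct objects'' and ``$0$ is the monoidal unit with no nontrivial endomorphism pattern relevant to comonoids.'' The only point requiring a line of care is confirming that the extra ribbon relation $s_{n-1}s_ns_{n-1}s_n=s_ns_{n-1}s_ns_{n-1}$ and the presence of full twists in $\tilde{\mathcal{B}}_n$ play no role, which is clear because all of the above arguments take place at the object $0$, where $\tilde{\mathcal{B}}(0,0)$ is trivial. Accordingly I would state the proof tersely, pointing to the proofs of Proposition~\ref{p.braid1} and Theorems~\ref{t.braid2}, \ref{t.braid3} and noting that they apply mutatis mutandis.

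\begin{proof}
Since the objects of $\tilde{\mathcal{B}}$ are the natural numbers, $\tilde{\mathcal{B}}(m,n)=\varnothing$ for $m\neq n$, and $0$ is the monoidal unit, all three arguments carry over verbatim.

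(i) If $\tilde{\mathcal{B}}$ were cocomplete, then for any functor $\mathscr{F}:\mathscr{D}\to\tilde{\mathcal{B}}$ from a small category a colimit $(t,(\phi_{\!_{D}})_{\!_{D\in\mathscr{D}}})$ would provide arrows $\phi_{\!_{D}}:\mathscr{F}D\to t$, forcing $\mathscr{F}D=t$ for all $D$; hence every such $\mathscr{F}$ would be constant, contradicting the existence of a nonconstant functor from a small category to $\tilde{\mathcal{B}}$. Thus $\tilde{\mathcal{B}}$ is not cocomplete.

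(ii) We argue for $\mathscr{A}=\mathcal{Z}(\tilde{\mathcal{B}})$; the remaining cases are analogous. Let $((m,\sigma),\Delta,\epsilon)$ be a comonoid in $\mathcal{Z}(\tilde{\mathcal{B}})$ with $\Delta:m\to m+m$ and $\epsilon:m\to 0$. The definition of $\tilde{\mathcal{B}}$ forces $m=0$, $\Delta=id_{\!_{0}}=\epsilon$, and $\sigma:0+-\rightarrow -+0$ with $\sigma_{\!_{n}}=id_{\!_{n}}$ for every natural number $n$. Hence $CoMon(\mathcal{Z}(\tilde{\mathcal{B}}))$ consists of the single object $((0,\sigma),id_{\!_{0}},id_{\!_{0}})$ and its identity arrow, so $CoMon(\mathscr{A})\cong\bullet$ with a single identity arrow.

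(iii) By (ii), $CoMon(\mathscr{A})$ is cocomplete, co-wellpowered, and has a generating set. Since $\mathscr{U}_{\!_{{\mathscr{A}}}}$ is cocontinuous, Theorem~\ref{p.SAFT} shows $\mathscr{U}_{\!_{{\mathscr{A}}}}$ has a right adjoint; equivalently, $(CoMon(\mathscr{A}),\mathscr{U}_{\!_{{\mathscr{A}}}})$ has cofree objects.
\end{proof}
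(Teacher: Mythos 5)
Your proposal is correct and follows essentially the same route as the paper, which simply observes that the identification of $\tilde{\mathcal{B}}$ parallels that of $\mathcal{B}$ and invokes Proposition \ref{p.braid1} and Theorems \ref{t.braid2} and \ref{t.braid3} mutatis mutandis. Your added remark that the extra ribbon relation and twists are irrelevant because everything happens at the object $0$ is a small but sensible clarification of why the transport works.
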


 \vspace{.3cm}
 
\begin{center}
• \textbf{Acknowledgment}
\end{center}
I would like to thank my adivsor Prof. Miodrag Iovanov for his support and his suggestions.

 \vspace{.3cm}

\vspace*{3mm} 
\begin{flushright}
\begin{minipage}{148mm}\sc\footnotesize

Adnan Hashim Abdulwahid\\
University of Iowa, \\
Department of Mathematics, MacLean Hall\\
Iowa City, IA, USA

{\tt \begin{tabular}{lllll}
{\it E--mail address} : &   {\color{blue} adnan-al-khafaji@uiowa.edu}\\
&  {\color{blue} adnanalgebra@gmail.com}\\
\end{tabular} }\vspace*{3mm}
\end{minipage}
\end{flushright}

\end{document}